\newtheorem{theorem}{Theorem}[section]
\newtheorem{lemma}[theorem]{Lemma}
\newtheorem{corollary}[theorem]{Corollary}
\newtheorem{question}[theorem]{Question}
\newtheorem{remark}[theorem]{Remark}
\newtheorem{proposition}[theorem]{Proposition}
\newtheorem{definition}[theorem]{Definition}
\newtheorem{example}[theorem]{Example}
\newtheorem{claim}[theorem]{Claim}
\newtheorem{notation}[theorem]{Notation}
\numberwithin{equation}{section}
\newcommand{\w}{\omega}
\newcommand{\NN}{\mathbb{N}}
\newcommand{\DD}{\mathcal{D}}
\newcommand{\dd}{(\mathbf{D})}
\newcommand{\UU}{\mathcal{U}}
\newcommand{\FF}{\mathcal{F}}
\newcommand{\KK}{\mathcal{K}}
\newcommand{\Nn}{\mathcal{N}}
\newcommand{\nn}{\mathfrak{n}}
\newcommand{\Pp}{\mathfrak{P}}
\newcommand{\A}{\mathcal{A}}
\author{S. S. Gabriyelyan, J. K{\c{a}}kol}
\address{Department of Mathematics, Ben-Gurion University of the Negev, Beer-Sheva, P.O. 653, Israel}
\email{saak@math.bgu.ac.il}
\address{Faculty of Mathematics and Informatics, A. Mickiewicz University\\ $61-614$ Pozna{\'n}, Poland}
\email{kakol@amu.edu.pl}
\thanks{The second  named author were supported by Generalitat Valenciana, Conselleria d'Educaci\'{o}, Cultura i Esport, Spain, Grant PROMETEO/2013/058.}
\begin{document}


\title[On $\Pp$-spaces and related concepts]{On $\Pp$-spaces and related concepts}


\subjclass[2000]{Primary 54C35, 54E18; Secondary 54E20}

\keywords{network, network character, cosmic space, $\aleph$-space, $\Pp$-space, function space}

\begin{abstract}
The concept of the strong Pytkeev property, recently introduced by Tsaban and Zdomskyy in \cite{boaz}, was successfully applied to the study of the space $C_c(X)$ of all continuous real-valued functions  with the compact-open topology on some classes of topological spaces $X$ including \v{C}ech-complete Lindel\"{o}f spaces. Being motivated also by several results providing various concepts of networks we introduce the class of $\Pp$-spaces strictly included in the class of $\aleph$-spaces.  This class of generalized metric spaces is closed under taking subspaces, topological sums and countable products and any space from this class has  countable tightness. Every  $\Pp$-space $X$ has the strong Pytkeev property. The main result of the present paper states  that if $X$ is an $\aleph_0$-space  and $Y$ is a  $\Pp$-space, then the function space $C_c(X,Y)$ has the strong Pytkeev property. This implies that for a separable metrizable space $X$ and  a metrizable topological group $G$  the space $C_c(X,G)$ is metrizable if and only if it is Fr\'{e}chet-Urysohn.
We show that a locally precompact group $G$ is a  $\Pp$-space if and only if $G$ is metrizable.
\end{abstract}

\maketitle

\section{Introduction}

All topological spaces are assumed to be Hausdorff. Various topological properties generalizing metrizability have been studied intensively by topologists and analysts, especially like first countability, Fr\'{e}chet--Urysohness, sequentiality and countable tightness (see \cite{Eng,kak}).  Pytkeev \cite{Pyt} proved that every sequential
space satisfies the property, known actually as the {\it Pytkeev property}, which is stronger than countable
tightness: a topological space $X$ has the {\it Pytkeev property} if for each $A\subseteq X$ and each $x\in \overline{A}\setminus A$, there are infinite subsets $A_1, A_2, \dots $ of $A$ such that each neighborhood of $x$ contains some $A_n$. Tsaban and Zdomskyy \cite{boaz} strengthened this property as follows. A  topological space $X$ has  the {\it strong Pytkeev property} if for each $x\in X$, there exists a countable family $\DD$ of subsets of $Y$,   such that for each neighborhood $U$ of $x$ and  each $A\subseteq X$ with $x\in \overline{A}\setminus A$, there is $D\in\DD$ such that $D\subseteq U$ and $D\cap A$ is infinite. Generalizing the property of the family $\DD$, Banakh in \cite{Banakh} introduced the notion of the Pytkeev network in $X$. A family $\mathcal{N}$ of subsets of a topological space $X$ is called a  {\em Pytkeev network at a point $x\in X$} if $\Nn$ is a network at $x$ and for every open set $U\subset X$ and a set $A$ accumulating at $x$ there is a set $N\in\Nn$ such that $N\subset U$ and $N\cap A$ is infinite; $\Nn$ is  a  {\em Pytkeev network} in $X$ if $\mathcal{N}$ is a Pytkeev network at each point $x\in X$. Hence $X$ has the strong Pytkeev property if and only if $X$ has a countable Pytkeev network at each point $x\in X$.

Now the main result of \cite{boaz} states that the space $C_c(X)$ of all continuous real-valued functions on a Polish space  $X$ (more generally, a separable metrizable space $X$, see \cite{Banakh} or Corollary \ref{c-Banakh} below) endowed with the compact-open topology has the strong Pytkeev property. This result was essentially strengthened in \cite{GKL2}: The space $C_c(X)$ has the strong Pytkeev property for every \v{C}ech-complete Lindel\"{o}f space $X$.
For the proof of this result the authors constructed a family $\DD$ of sets in $C_c(X)$ such that for every neighborhood $U_\mathbf{0}$ of the zero function $\mathbf{0}$ the union $\bigcup \{ D \in\DD : \mathbf{0}\in D \subseteq U_\mathbf{0}\}$ is a  neighborhood of $\mathbf{0}$ (see the condition $\dd$ in \cite{GKL2}).
Being inspired by this idea for $C_{c}(X)$  we propose the following  types of networks which will be applied in the sequel.

\begin{definition} \label{def2} {\em
A family $\Nn$ of subsets of a topological space $X$ is called

$\bullet$ a {\em $cn$-network}  at a point $x\in X$ if for each neighborhood $O_x$ of $x$ the set $\bigcup \{ N \in\Nn : x\in N \subseteq O_x \}$ is a neighborhood of $x$; $\Nn$ is a {\em $cn$-network} in $X$ if $\mathcal{N}$ is a $cn$-network at each point $x\in X$.

$\bullet$  a {\em $ck$-network}  at  a point $x\in X$ if for any neighborhood $O_x$ of $x$ there is a neighborhood $U_x$ of $x$ such that for each compact subset $K\subset U_x$ there exists a finite subfamily $\FF\subset\mathcal{N}$ satisfying $x\in \bigcap\FF$ and $K\subset\bigcup\FF\subset O_x$; $\Nn$ is a {\em $ck$-network}  in $X$ if $\mathcal{N}$ is a $ck$-network at each point $x\in X$.

$\bullet$  a {\em $cp$-network}  at  a point  $x\in X$ if either $x$ is an isolated point of $X$ and $\{x \} \in\Nn$, or for each subset $A\subset X$ with $x\in \overline{A}\setminus A$ and each neighborhood $O_x$ of $x$ there is a set $N\in\mathcal{N}$ such that $x\in N\subset O_x$ and $N\cap A$ is infinite; $\Nn$ is  a  {\em $cp$-network} in $X$ if $\mathcal{N}$ is a Pytkeev network at each point $x\in X$.  }
\end{definition}

These notions relate as follows:
\[
\xymatrix{\mbox{base (at $x$)}\ar@{=>}[r]&
\mbox{ $ck$-network (at $x$)}\ar@{=>}[r]&\mbox{$cn$-network (at $x$)}\ar@{=>}[r]&\mbox{network (at $x$)}.
}
\]
The following fact (see Proposition \ref{pTigh}) additionally explains our interest to the study of spaces $X$ with countable $cn$-network at each point $x\in X$: If $X$ has a countable $cn$-network at a point $x$ then $X$ has a countable tightness at $x$. In Section \ref{secRel} we recall other important types of networks and related results used in the article.

Let us recall the following classes of topological spaces admitting certain countable networks of various types.
\begin{definition} {\em
A topological space $X$ is said to be
\begin{itemize}
\item (\cite{Mich}) a {\em cosmic} space if $X$ is regular and has a countable network;
\item (\cite{Mich})  an {\em $\aleph_0$-space} if $X$ is regular and  has a countable $k$-network;
\item (\cite{Banakh})  a {\em $\Pp_0$-space} if $X$ is regular and  has a countable Pytkeev network.
\end{itemize} }
\end{definition}
It is known also that:  $\Pp_0$-space $\Rightarrow$ $\aleph_0$-space $\Rightarrow$ cosmic, but the converse is  false (\cite{Banakh,Mich}).

It is easy to see that for each network (resp. each $k$-network or a Pytkeev network) $\Nn$ in a topological space $X$ the family $\mathcal{N}\vee\mathcal{N} :=\{ A\cup B: A,B\in\mathcal{N}\}$ is a $cn$-network (resp. a $ck$-network or a $cp$-network) in $X$. Hence, a regular space $X$ is cosmic (resp. an $\aleph_0$-space or a  $\Pp_0$-space) if and only if $X$ has a countable $cn$-network (resp.  a countable $ck$-network or a countable $cp$-network).
On the other hand, the versions of network and $cn$-network differ at the $\sigma$-locally finite level.

Okuyama \cite{Oku} and O'Meara \cite{OMe2}, having in mind the Nagata-Smirnov metrization
theorem, introduced the classes of $\sigma$-spaces and $\aleph$-spaces, respectively, which contain all
metrizable spaces.
\begin{definition} \label{def3} {\em
A topological space $X$ is called

$\bullet$ (\cite{Oku}) a {\em $\sigma$-space} if $X$ is regular and has a $\sigma$-locally finite network.

$\bullet$ (\cite{OMe2}) an {\em $\aleph$-space} if $X$ is regular and has a $\sigma$-locally finite $k$-network.}
\end{definition}
This motivates us to propose the following concept.
\begin{definition}  \label{def4}{\em
A topological space $X$ is called   a  {\em $\Pp$-space} if $X$ has a $\sigma$-locally finite $cp$-network.}
\end{definition}
Each  $\Pp$-space $X$ has the strong Pytkeev property (see Corollary \ref{c-Aleph}).

As one should  expect, any  $\Pp$-space is an $\aleph$-space. Moreover, it turns out that  $\Pp$-spaces satisfy even a stronger condition. In Section \ref{secGMS} we study also the following {\it strict} versions of $\sigma$-spaces and $\aleph$-spaces.
\begin{definition}  \label{def5}{\em
A topological space $X$ is called

$\bullet$ a {\em strict $\sigma$-space} if $X$ has a $\sigma$-locally finite $cn$-network;

$\bullet$ a {\em strict $\aleph$-space} if $X$ has a $\sigma$-locally finite $ck$-network.}
\end{definition}

The following diagram describes the relation between new, as well as, known classes of generalized metric spaces and justifies the study of strict $\sigma$-spaces and strict $\aleph$-spaces.
\[
\xymatrix{
\mbox{separable metrizable space}\ar@{=>}[r]\ar@{=>}[d]&
\mbox{$\Pp_0$-space}\ar@{=>}[r]\ar@{=>}[d] &\mbox{$\aleph_0$-space}\ar@{=>}[r]\ar@{=>}[d]&\mbox{cosmic space}\ar@{=>}[d]\\
\mbox{metrizable space}\ar@{=>}[r]&\mbox{$\Pp$-space}\ar@{=>}[r]&\mbox{strict $\aleph$-space}\ar@{=>}[r]\ar@{=>}[d] &\mbox{strict $\sigma$-space}\ar@{=>}[d]\\
&&\mbox{$\aleph$-space}\ar@{=>}[r]&\mbox{$\sigma$-space.}
}
\]
None of the implications in this diagram can be reversed (see Theorem \ref{tMetPrecomGr} and Examples \ref{exa-A-non-CS} and \ref{exaBohr}).

In Section \ref{secMetr} we propose some  criterions for metrizability of topological spaces (see Theorem \ref{tMetr-Pyt}).
For the precompact topological groups, i.e. which are embedded into a locally compact group,  we prove the following.
\begin{theorem} \label{tMetPrecomGr}
For a locally precompact topological group $G$ the following conditions are equivalent:
\begin{enumerate}
\item[{\rm (i)}]  $G$ is metrizable.
\item[{\rm (ii)}]  $G$ is a  $\Pp$-space.
\item[{\rm (iii)}]  $G$ has the strong Pytkeev property.
\end{enumerate}
\end{theorem}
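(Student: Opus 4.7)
The implications (i)$\Rightarrow$(ii)$\Rightarrow$(iii) are short. For (i)$\Rightarrow$(ii), a metrizable group has a $\sigma$-discrete base $\BB$ (Nagata--Smirnov), and in any metrizable space a base is automatically a $cp$-network: for $x\in\overline{A}\setminus A$, some sequence in $A\setminus\{x\}$ converges to $x$, and any $B\in\BB$ with $x\in B\subseteq O_x$ contains a tail of that sequence, making $B\cap A$ infinite; thus $G$ is a $\Pp$-space. The implication (ii)$\Rightarrow$(iii) is Corollary~\ref{c-Aleph}.

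For (iii)$\Rightarrow$(i), by the Birkhoff--Kakutani theorem it suffices to produce a countable local base at $e$. Fix an open symmetric precompact neighborhood $V_0$ of $e$, so that the closure $K:=\overline{V_0}^{\widetilde{G}}$ in the Raikov completion $\widetilde{G}$ is compact; let $\DD=\{D_n:n\in\NN\}$ witness the strong Pytkeev property at $e$. Intersecting with $V_0$ I may assume $D_n\subseteq V_0$ for all $n$, losing nothing since neighborhoods of $e$ contained in $V_0$ form a local base. By Proposition~\ref{pTigh} the strong Pytkeev property already gives $G$ countable tightness at $e$, and hence at every point by left translation.

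My plan for the heart of the argument is to leverage precompactness to upgrade countable tightness to first countability via Arkhangel'skii's theorem that a compact topological group of countable tightness is metrizable. I would pass to the open $\sigma$-compact subgroup $\widetilde{H}\leq\widetilde{G}$ generated by $V_0$ (open because $V_0$ is open, $\sigma$-compact because $V_0$ is precompact), and work with the compact closures $K_n:=\overline{D_n}^{\widetilde{G}}\subseteq K$. Using the density of $G$ in $\widetilde{G}$ together with the uniform control provided by $\DD$, I would show that any compact subgroup of $\widetilde{H}$ inherits countable tightness from $G$; applying this to compact normal subgroups furnished by the Kakutani--Kodaira structure theorem on $\sigma$-compact locally compact groups, Arkhangel'skii's theorem delivers metrizable quotients whose kernels shrink to $\{e\}$, and a standard diagonal argument then gives a countable base at $e$ in $\widetilde{H}$. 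Since $\widetilde{H}$ is an open neighborhood of $e$ in $\widetilde{G}$, this forces first countability of $G$ at $e$, completing the proof via Birkhoff--Kakutani.

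The main obstacle is precisely the transfer of tightness (or rather, of the Pytkeev witness) from the dense subgroup $G$ to the completion $\widetilde{G}$: countable tightness is not in general preserved under enlargement to a dense superset, and so the weaker consequence of (iii) is by itself insufficient. The strictly stronger assumption of strong Pytkeev --- that a \emph{single} countable family $\DD$ controls all accumulation at $e$ uniformly --- is what makes the transfer possible, because the closures $\overline{D_n}^{\widetilde{G}}$ form a countable family of compact sets in $\widetilde{G}$ that continue to detect every accumulation at $e$ originating from $G$. Carrying out this transfer rigorously, and reconciling it with the Kakutani--Kodaira decomposition, is the technical crux of the argument.
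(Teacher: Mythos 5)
Your implications (i)$\Rightarrow$(ii) and (ii)$\Rightarrow$(iii) are fine and coincide with the paper's (Proposition \ref{pMetr-Pyt} and Corollary \ref{c-Aleph}). The problem is (iii)$\Rightarrow$(i): what you offer is a plan whose central step --- transferring the Pytkeev witness from $G$ to compact subgroups of the Raikov completion so that Arkhangel'skii's theorem applies --- is precisely the part you leave unproved, and as formulated it does not go through. The family $\DD$ detects accumulation at $e$ only of subsets of $G$; a compact subgroup $N$ of $\widetilde{H}$ may meet $G$ trivially (compare $\mathbb{Z}^+$ dense in $b\mathbb{Z}$), so countable tightness of $N$ concerns sets lying entirely in $\widetilde{G}\setminus G$, about which $\DD$ and the closures $\overline{D_n}^{\widetilde{G}}$ say nothing. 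There is also a circularity in the Kakutani--Kodaira step: to make the compact kernels shrink to $\{e\}$ you must first exhibit a countable family of neighborhoods of $e$ in $\widetilde{H}$ with intersection $\{e\}$, i.e.\ countable pseudocharacter, which for a locally compact group is already equivalent to first countability --- the very conclusion being sought. So the heart of the implication is missing.

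The paper avoids the completion entirely and argues inside $G$. Fix a countable closed $cp$-network $\Nn$ at $e$. If $G$ is not first countable at $e$, there is a symmetric precompact neighborhood $U$ of $e$ such that every $N\in\Nn$ with $N\subseteq U$ is nowhere dense (otherwise, choosing $V$ with $V^{-1}V\subseteq U$, the family $\{N^{-1}N:N\in\Nn\}$ would contain a countable base at $e$). Enumerating these nowhere dense members as $\{N'_k\}_{k\in\NN}$, one inductively chooses distinct points $a_n\in U$ with $a_n\notin\bigcup_{k<n}\bigcup_{m\le n}a_kN'_m$ and sets $A=\{a_k^{-1}a_n:k<n\}$. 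Precompactness of $U$ together with the Pigeonhole Principle shows $e\in\overline{A}$, yet by construction each $N'_q$ meets $A$ only inside the finite set $\{a_i^{-1}a_j:i<j<q\}$, contradicting the $cp$-network property. If you wish to salvage your route you would need, at a minimum, a lemma pushing the strong Pytkeev property (not merely countable tightness) from a dense subgroup to its completion; the paper's direct combinatorial argument is shorter and sidesteps this entirely.
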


It is well-known that the classes of $\aleph$-spaces and $\sigma$-spaces are closed under taking subspaces, topological sums and countable products (see \cite{gruenhage}). The same holds also for the new three classes of generalized metric spaces introduced in Definitions \ref{def4} and  \ref{def5}: they are closed under taking subspaces, topological sums and countable products (see Section \ref{secPro}). It is well-known that the class of $\aleph$-spaces is closed also under taking  function spaces with the Lindel\"{o}f domain. Given topological spaces $X$ and $Y$, let $C(X,Y)$ be the family of all continuous functions from $X$ into $Y$, and denote by $C_c(X,Y)$  the family $C(X,Y)$ endowed with the compact-open topology. Foged \cite{foged} (and O'Meara \cite{OMe2}) proved that $C_c(X,Y)$ is an $\aleph$-space for each $\aleph_0$-space $X$ and any $\aleph$-space $Y$. So it is natural to ask whether an analogous result holds also for  $\Pp$-spaces $Y$. In the last Section \ref{secFunc} we prove the following partial result which is the main result of the article.
\begin{theorem} \label{t-strong-Pyt-A}
Let $X$ be an $\aleph_0$-space. Then:
\begin{enumerate}
\item If $Y$ is a $\Pp$-space, then the function space $C_c(X,Y)$ has the strong Pytkeev property.
\item If $Y$ is a strict $\aleph$-space, then the function space $C_c(X,Y)$ has a countable $ck$-network at each function $f\in C_c(X,Y)$.
\end{enumerate}
\end{theorem}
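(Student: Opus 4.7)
I would adopt the strategy of building, for each $f \in C_c(X,Y)$, a countable family $\DD_f$ of subsets of $C_c(X,Y)$ that serves as the required network at $f$. Since $X$ is an $\aleph_0$-space, Michael's theorem supplies a countable $k$-network $\mathcal{P}=\{P_n : n\in\w\}$ on $X$ consisting of compact sets, which may be assumed closed under finite unions. Writing the given $\sigma$-locally finite network on $Y$ as $\Nn=\bigcup_{m\in\w}\Nn_m$ (a $cp$-network for part (1), a $ck$-network for part (2)), local finiteness of $\Nn_m$ together with compactness of $f(P_n)$ makes the family $\Nn_m[f(P_n)] := \{N\in\Nn_m : N\cap f(P_n)\neq\emptyset\}$ finite. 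Hence $\Nn(f) := \bigcup_{n,m}\Nn_m[f(P_n)]$ is a countable subfamily of $\Nn$, and I define $\DD_f$ to be the countable collection of all finite intersections $\bigcap_{i=1}^{k}[P_{n_i},\bigcup\FF_i]$ with $P_{n_i}\in\mathcal{P}$ and $\FF_i\subset\Nn(f)$ finite, restricted to those containing $f$.

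To verify that $\DD_f$ works, I would reduce an arbitrary open neighborhood $O_f$ to a basic one $\bigcap_{i}[K_i,V_i]\subset O_f$ with $f(K_i)\subset V_i$, and then use the $k$-network $\mathcal{P}$ on $X$ to choose $P_{n_i}\in\mathcal{P}$ with $K_i\subset P_{n_i}\subset f^{-1}(V_i)$. This gives the smaller open neighborhood $\bigcap_i[P_{n_i},V_i]\subset O_f$ of $f$, after which it remains to exhibit finite $\FF_i\subset\Nn(f)$ with $f(P_{n_i})\subset\bigcup\FF_i\subset V_i$ such that the resulting $D := \bigcap_i[P_{n_i},\bigcup\FF_i]\in\DD_f$ satisfies the target property.

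For part (2) I would apply the $ck$-network property of $\Nn$ at each point of the compact $f(P_{n_i})$ with target $V_i$, take a finite subcover (shrunk via regularity of $Y$) to produce a neighborhood $W_i\subset V_i$ of $f(P_{n_i})$, and set $U_f:=\bigcap_i[P_{n_i},W_i]$, an open neighborhood of $f$ in $O_f$. Continuity of the evaluation $C_c(X,Y)\times P_{n_i}\to Y$ (valid since $P_{n_i}$ is compact) turns any compact $\KK\subset U_f$ into a compact image in $W_i$; decomposing this image along the cover and invoking the $ck$-property in each piece then yields finite $\FF_i\subset\Nn(f)$ giving a single $D\in\DD_f$ with $f\in D\subset O_f$ and $\KK\subset D$, which is even stronger than the $ck$-network property requires.

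Part (1) is the main obstacle. Given $A\subset C_c(X,Y)$ with $f\in\overline{A}\setminus A$, after refining $O_f$ to $O'_f := \bigcap_i[P_{n_i},V_i]$ the intersection $A\cap O'_f$ is infinite by Hausdorffness. I plan to produce the required finite families $\FF_i$ by an inductive extraction over $i=1,\ldots,k$: at each stage, given an infinite subfamily $A_{i-1}\subset A\cap O'_f$, apply the Pytkeev property of $\Nn$ on $Y$ at finitely many points of a finite cover of $f(P_{n_i})$, combined with a pigeonhole argument on the finite hitting patterns $\Nn_m[g(P_{n_i})]$, to extract a finite $\FF_i$ and an infinite further $A_i\subset A_{i-1}$ with $g(P_{n_i})\subset\bigcup\FF_i$ for all $g\in A_i$. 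The technical heart of this step is converting topological accumulation of $A$ at $f$ in $C_c(X,Y)$ into accumulation of point values $g(p)$ at $f(p)$ in $Y$, where the $cp$-property can be invoked. A preliminary observation needed here, useful in its own right, is that every $cp$-network is automatically a $cn$-network, verified by applying the Pytkeev property to the set $X\setminus\bigcup\{N\in\Nn : x\in N\subset O_x\}$ and deriving a contradiction; this $cn$-property is what allows the selected $\bigcup\FF_i$ to be a neighborhood of $f(P_{n_i})$ inside $V_i$, ensuring that infinitely many $g\in A_{i-1}$ actually satisfy the required containment.
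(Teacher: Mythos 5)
Your overall architecture (a countable $k$-network on $X$, a countable subfamily of the $\sigma$-locally finite network on $Y$ adapted to $f$, and candidate sets of the form $\bigcap_i[K_i;N_i]$) is the same as the paper's, but three of your key steps fail. First, an $\aleph_0$-space need not have a countable $k$-network consisting of compact sets: such a network would make $X$ hemicompact (apply the $k$-network property with $U=X$ to see that every compact set lies in a finite union of compact members), and $\mathbb{Q}$ is an $\aleph_0$-space that is not hemicompact. This destroys both the finiteness of your hitting families $\Nn_m[f(P_n)]$ (so $\Nn(f)$ need not be countable) and your appeals to continuity of the evaluation map on $P_{n_i}$. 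The paper circumvents this by exploiting that $f(X)$ is Lindel\"of: Theorem \ref{t-Lind-Set} extracts from $\DD$ a countable subfamily which is an $\nn$-network at every point of $f(X)$, using closedness of the members of $\DD$ together with local finiteness; you need that lemma (or a substitute) even to obtain a countable candidate family.

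Second, in part (2) the compact set you must cover, namely the evaluation image of $\mathcal{K}\times P_{n_i}$ for a compact $\mathcal{K}\subset U_f$, is not contained in $f(X)$, so applying the $ck$-property of $\Nn$ at its points produces members of $\DD$ that need not meet $f(X)$ and hence need not belong to your pre-selected countable family $\Nn(f)$. Property (b) of Theorem \ref{t-Lind-Set} exists precisely to repair this: it gives an open $W\supset f(C_i)$ such that \emph{every} compact subset of $W$ --- not only those inside $f(X)$ --- is covered by finitely many members of the chosen countable family. Third, and most seriously, the inductive extraction in part (1) is not justified: the $cp$-property of $\Nn$ at a point $y\in f(P_{n_i})$ concerns subsets of $Y$ accumulating at $y$, and accumulation of $A$ at $f$ in $C_c(X,Y)$ does not yield accumulation of the point-values $\{g(p):g\in A\}$ at $f(p)$ (all $g\in A$ may agree with $f$ at $p$); nor can you pigeonhole on the hitting patterns $\Nn_m[g(P_{n_i})]$, since these range over infinitely many finite subsets of the infinite family $\Nn_m$. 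The paper instead first proves the covering claim $\bigcap_i[C_i;W_i]\subset\bigcup_j\FF_j$ (using metrizability of compacta in $\aleph_0$-spaces and property (b)) and then argues by contradiction in Banakh's style: if every $\FF_j\cap A$ were finite, witness points $x_\alpha$ with $\alpha(x_\alpha)\notin N_{i_\alpha,j_\alpha}$ would give sets $B_i$ with no accumulation point on $f(C_i)$, producing a neighborhood of $f$ meeting $A$ in a finite set. Some such global contradiction, resting on the covering claim, seems unavoidable; your direct extraction has no mechanism to produce the infinite subfamily $A_i$ at each stage.
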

This implies that for a separable metrizable space $X$
and a metrizable topological group $G$  the space $C_c(X,G)$ is metrizable if and only if it is
Fr\'echet-Urysohn, see Corollary \ref{nice}.
Note that item (2) of this theorem shows additionally a big difference between strict $\aleph$-spaces and  $\aleph$-spaces: any $\aleph$-space $X$ (in particular, a function space $C_c(X,Y)$ with an $\aleph_0$-space $X$ and an $\aleph$-space $Y$) has only countable $cs^\ast$-character (see \cite{GK2} or Corollary \ref{cAleph-cs}), but $X$ may not have a countable $k$-network at some point $x\in X$ by Example \ref{exa-A-non-CS}. Also in Section \ref{secFunc}  we pose  several open problems.

Notice that the next our paper \cite{GK-GMS2}  describes the topology of a topological space $X$ admitting  a countable $cp$-, $ck$- or $cn$-network at a point $x\in X$, and  also provides some applications for topological groups and topological vector spaces.

\section{Networks in topological spaces and relations between them} \label{secRel}

Recall the most important types of networks which are used in the paper.
\begin{definition} \label{def1} {\em
Let $\mathcal{N}$ be a family  of subsets of a topological space $X$. Then:

$\bullet$ (\cite{Arhan}) $\Nn$ is a {\em network at a point} $x\in X$ if for each neighborhood $O_x$ of $x$ there is a set $N\in\mathcal{N}$ such that $x\in N\subset O_x$; $\Nn$ is a {\em network} in $X$ if $\mathcal{N}$ is a network at each point $x\in X$.

$\bullet$  (\cite{Guth}) $\Nn$ is a {\em $cs$-network at  a point} $x\in X$ if for each sequence $(x_n)_{n\in\NN}$ in $X$ convergent to  $x$ and for each neighborhood $O_x$ of $x$ there are $N\in\mathcal{N}$ and $k\in\NN$ such that $\{ x\} \cup \{x_n : n\geq k\} \subset N\subset O_x$; $\Nn$ is a {\em $cs$-network } in $X$ if $\mathcal{N}$ is a $cs$-network at each point $x\in X$.

$\bullet$ (\cite{Gao}) $\Nn$ is a {\em $cs^\ast$-network at  a point} $x\in X$ if for each sequence $(x_n)_{n\in\NN}$ in $X$ converging to  $x$ and for each neighborhood $O_x$ of $x$ there is a set $N\in\mathcal{N}$ such that $x\in N\subset O_x$ and the set $\{n\in\NN :x_n\in N\}$ is infinite; $\Nn$ is a {\em $cs^\ast$-network}  in $X$ if $\mathcal{N}$ is a $cs^\ast$-network at each point $x\in X$.

$\bullet$ (\cite{Banakh1}) $\Nn$ is  a {\em local} {\em $k$-network at a point $x\in X$} if for each neighborhood $O_x$ of $x$ there is a neighborhood $U_x$ of $x$ such that for each compact subset $K\subset U_x$ there is a finite subfamily $\FF\subset\mathcal{N}$ such that $K\subset\bigcup\FF\subset O_x$; $\Nn$ is a {\em local $k$-network} in $X$ if $\mathcal{N}$ is a local $k$-network at each point $x\in X$.

$\bullet$ (\cite{Mich}) $\Nn$ is a {\em $k$-network} in $X$ if whenever $K\subset U$ with $K$ compact and $U$ open in $X$, then $K\subset \bigcup\FF\subset U$ for some finite $\FF\subset\Nn$.
}
\end{definition}
For regular spaces $X$ the notions of local $k$-network and $k$-network coincide (see Remark \ref{r1} below). Note that a regular space $X$ is an $\aleph_0$-space if and only if $X$ has a countable $cs$-network (\cite{Guth}) if and only if $X$ has a countable $cs^\ast$-network  (\cite{Gao}).

Below we discuss some simple relations between various types of networks.
\begin{remark} \label{r1} {\em
Let $X$ be  a topological space.


(i) Each $ck$-network (at a point $x\in X$) is a local $k$-network (at $x$). On the other hand, if $\Nn$ is a local $k$-network at $x$, then the family $\Nn_x :=\{ N\cup\{ x\} : N\in\Nn\}$ is a $ck$-network at $x$.
Also if $\Nn$ is a $k$-network in $X$, then $\Nn$ is a local $k$-network and the family $\mathcal{N}\vee\mathcal{N}$ is a $ck$-network for $X$.

(ii) If $X$ is a {\em regular} space and $\Nn$ is a local $k$-network for $X$, then $\Nn$ is a $k$-network. Indeed, let $K\subset U$ with $K$ compact and $U$ open in $X$. For every $x\in K$, take  open neighborhoods $W_x, V_x$ and $O_x$ of $x$ such that $\overline{W_x}\subset V_x\subset O_x\subset U$ and $V_x$ satisfies the definition of $k$-network for $O_x$. So there is a finite family $\FF_x \subset \Nn$ such that  $\overline{W_x} \cap K \subset \bigcup\FF_x \subset U_x$. Since $K$ is compact, $K\subset \bigcup_{j=1}^m W_{x_j}$ for some $x_1,\dots,x_m \in K$. Clearly,
\[
K\subset \bigcup_{j=1}^m \bigcup\FF_{x_j} \subset  \bigcup_{j=1}^m O_{x_j} \subset U.
\]
Thus $\Nn$ is a $k$-network.

(iii) It is clear that any $cp$-network $\Nn$ at a point $x\in X$ is  a $cs^\ast$-network at $x$.
Observe that $\Nn$ is also a $cn$-network at $x$. Indeed, let $U$ be a neighborhood of $x$ and $W:= \bigcup \{ N\in\Nn : N\subset U\}$. We have to prove that $W$ is a neighborhood of $x$. If this is not the case, then $x\in \overline{X\setminus W}$. By definition, there is $N\in\Nn$ such that $x\in N\subset U$ and $N\cap(X\setminus W)$ is infinite. Since $N\subset W$ we obtain $W\cap (X\setminus W)\not= \emptyset$, a contradiction.

(iv) If $\mathcal{N}$ is a $ck$-network (at a point $x\in X$) in $X$, then $\mathcal{N}$ is a $cn$-network and a $cs^\ast$-network (at $x$).

(v) For any topological space $X$ the family $\{ \{x\}: x\in X\}$ is trivially a network for $X$. So to avoid such a trivial and unpleasant situation in which a network actually has nothing common with the {\it topology} of $X$, we guess that the notion of $cn$-network is of interest.

(vi) If $\Nn$ is a Pytkeev network at a point $x$ of  $X$, then  the family $\Nn_x =\{ N\cup\{x\}: N\in\Nn\}$ is a $cp$-network at $x$. So the difference between these notions is not essential when they are considered only at a {\it fix} point (as in the definition of the strong Pytkeev property). But these notions essentially differ on $\sigma$-locally finite level, see Example \ref{exa-A-non-CS} below.
}
\end{remark}

Recall that a topological space $X$ has {\it countable tightness at a point $x\in X$} if whenever $x\in \overline{A}$ and $A\subseteq X$, then $x\in \overline{B}$ for some countable $B\subseteq A$; $X$ has {\em countable tightness} if it has countable tightness at each point $x\in X$.

\begin{proposition} \label{pTigh}
Let a topological space $X$ have a countable $cn$-network at a point $x$. Then $X$ has countable tightness at $x$.
\end{proposition}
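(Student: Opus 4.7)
The plan is to take the countable $cn$-network at $x$, pick one point of $A$ from each element that meets $A$, and verify that the resulting countable subset of $A$ has $x$ in its closure. So, enumerate the countable $cn$-network at $x$ as $\{N_n : n \in \NN\}$, where we may assume $x \in N_n$ for every $n$ (discard the rest, since only elements containing $x$ are relevant to the $cn$-network condition). Suppose $A \subseteq X$ with $x \in \overline{A}$; we need to produce a countable $B \subseteq A$ with $x \in \overline{B}$.

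For each $n \in \NN$ with $N_n \cap A \neq \emptyset$, choose a point $b_n \in N_n \cap A$, and let $B := \{b_n : n \in I\} \subseteq A$, where $I := \{n \in \NN : N_n \cap A \neq \emptyset\}$. Then $B$ is countable. To see that $x \in \overline{B}$, fix any open neighborhood $O_x$ of $x$. By the defining property of a $cn$-network at $x$, the set
\[
W := \bigcup \{ N_n : x \in N_n \subseteq O_x \}
\]
is a neighborhood of $x$. Since $x \in \overline{A}$, the intersection $W \cap A$ is nonempty (this uses $x \notin A$ if one is worried, but if $x \in A$, the countable set $\{x\} \subseteq A$ already witnesses countable tightness, so we may assume $x \notin A$). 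Thus there exists some $n$ with $x \in N_n \subseteq O_x$ and $N_n \cap A \neq \emptyset$; for this $n$ we have $b_n \in N_n \cap A \subseteq O_x$, hence $B \cap O_x \neq \emptyset$. As $O_x$ was arbitrary, $x \in \overline{B}$, which establishes countable tightness of $X$ at $x$.

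The argument is essentially a direct unpacking of definitions: the $cn$-network property is precisely what allows one to refine an arbitrary neighborhood $O_x$ of $x$ to a smaller neighborhood that is the union of countably many network elements sitting inside $O_x$, so picking one representative from each network element already meets every neighborhood of $x$. There is no real obstacle here; the only subtlety worth flagging is the trivial case $x \in A$, which must be dispatched separately (or absorbed into the choice of $B$) since the $cn$-network condition is phrased using the containment $x \in N$ rather than using accumulation of $A$ at $x$.
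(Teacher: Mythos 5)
Your proof is correct and follows essentially the same route as the paper's: choose one point of $A$ from each network element meeting $A$, then use the $cn$-network condition to see that the resulting countable set meets every neighborhood of $x$. The paper simply dispatches the trivial case by assuming $x\in\overline{A}\setminus A$ from the outset, exactly as you note.
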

\begin{proof}
Let $\{ D_n\}_{n\in\NN}$ be a countable $cn$-network at $x$ and $A\subset X$ be such that $x\in \overline{A}\setminus A$. Set $J:=\{ n\in\NN : D_n \cap A \not=\emptyset\}$. For every $n\in J$ take arbitrarily $a_n\in D_n \cap A$ and set $B:= \{ a_n \}_{n\in J} \subset A$. We show that $x\in \overline{B}$.   For every neighborhood $U$ of $x$ set $I(U):=\{ n\in\NN : x\in D_n \subseteq U\}$. Then, by definition, the set $\bigcup_{n\in I(U)} D_n$ contains a neighborhood $V$ of $x$. Since $A\cap V\not=\emptyset$, we can find $n\in I(U)\cap J$. Thus $a_n \in B\cap U$.
\end{proof}

It is natural to ask for which topological spaces some of types of networks coincide? Partial answers to this question are given in three  propositions below proved by Banakh \cite{Banakh1}. 
\begin{proposition}[\cite{Banakh1}]\label{fB}
Any countable Pytkeev network  at a point $x$ of a topological space $X$ is a local k-network  at $x$. Consequently, any countable $cp$-network at a point $x\in X$ is a $ck$-network at $x$.
\end{proposition}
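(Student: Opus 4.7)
Let $\Nn=\{N_k:k\in\NN\}$ be a countable Pytkeev network at $x$, and fix an open neighborhood $O_x$ of $x$. After replacing $\Nn$ by the (still countable) family of all finite unions of its members, I enumerate $\Nn^{O_x}:=\{N\in\Nn:N\subseteq O_x\}$ so that it contains an increasing chain $M_1\subseteq M_2\subseteq\cdots$ (take $M_n:=P_1\cup\cdots\cup P_n$ where $(P_j)$ lists $\Nn^{O_x}$), with every element of $\Nn^{O_x}$ contained in some $M_n$. I claim that $O_x$ itself is the witnessing neighborhood $U_x$ for the local $k$-network property at $x$: every compact $K\subseteq O_x$ is contained in some $M_n$.

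Suppose for contradiction that no neighborhood $U_x\subseteq O_x$ of $x$ witnesses the property; then for every such $U_x$ there is a compact $K_{U_x}\subseteq U_x$ with $K_{U_x}\not\subseteq M_n$ for every $n$, so $U_x\setminus M_n\neq\emptyset$ and hence $x\in\overline{O_x\setminus M_n}$ for every $n$. For $n$ large enough that $x\in M_n$ (which occurs since $\Nn$ is a network at $x$), the set $A_n:=O_x\setminus M_n$ accumulates at $x$. The goal is to construct an infinite set $A=\{a_n:n\in\NN\}\subseteq O_x$ accumulating at $x$ with $a_n\notin M_n$ for every $n$: monotonicity of $(M_n)$ then gives $|M_k\cap A|\leq k-1$ for each $k$, while the Pytkeev property applied to $A$ and the open set $O_x$ produces some $N\in\Nn^{O_x}$ (hence $N\subseteq M_j$ for some $j$) with $|N\cap A|=\infty$, forcing $|M_j\cap A|=\infty$, the desired contradiction. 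The points $a_n$ are chosen recursively: at step $n$, apply Pytkeev to $A_n$ and a carefully selected open neighborhood $V_n\subseteq O_x$ of $x$ to obtain $N_n\in\Nn$ with $N_n\subseteq V_n$ and $|N_n\cap A_n|=\infty$, then pick $a_n\in N_n\setminus M_n\subseteq V_n$.

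\textbf{The main technical obstacle} is arranging the open neighborhoods $V_n$ so that $A=\{a_n\}$ accumulates at $x$, since $X$ is not assumed first countable at $x$. I expect the resolution to be a diagonal argument exploiting the countable ``network at $x$'' structure of $\Nn_x:=\{N\in\Nn:x\in N\subseteq O_x\}$, enumerated as $(Q_k)$: by interleaving the selection of $V_n$ with the enumeration $(Q_k)$, one arranges that each $Q_k$ contains $a_n$ for infinitely many $n$, and since every neighborhood of $x$ contains some $Q_k$, this guarantees that $A$ accumulates at $x$. The subtlety is that the sets $Q_k$ themselves need not be open, so they cannot be used directly as the $V_n$'s in the Pytkeev application; handling this is where Banakh's care comes in.

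\textbf{Consequence.} Given a countable $cp$-network $\Nn$ at $x$, the subfamily $\Nn':=\{N\in\Nn:x\in N\}$ is itself a countable Pytkeev network at $x$: the $cp$-network definition's witness $N$, having $x\in N\subseteq O_x$ and $N\cap A$ infinite, already lies in $\Nn'$, and the network-at-$x$ condition is inherited for the same reason. Applying the first claim to $\Nn'$ yields a neighborhood $U_x$ of $x$ such that every compact $K\subseteq U_x$ admits a finite $\FF\subseteq\Nn'$ with $K\subseteq\bigcup\FF\subseteq O_x$; since every $N\in\FF$ contains $x$, automatically $x\in\bigcap\FF$, so $\FF$ witnesses the $ck$-network property for $\Nn$ at $x$.
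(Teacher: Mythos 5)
The paper gives no proof of this proposition (it is quoted from \cite{Banakh1}), so your proposal has to stand on its own, and it does not: there is a genuine gap, and moreover the route you sketch for closing it cannot work. The decisive problem is that, after extracting from the negation of the local $k$-network property the compact sets $K_{U_x}\subseteq U_x$ with $K_{U_x}\not\subseteq M_n$, you immediately discard their compactness and keep only the consequence that each $A_n=O_x\setminus M_n$ accumulates at $x$. From that information alone the object you are trying to build --- an infinite $A\subseteq O_x$ accumulating at $x$ with $A\cap M_n$ finite for every $n$ --- provably need not exist, since its existence would contradict the Pytkeev property of $\Nn$ itself. Indeed, if your construction could always be carried out whenever all the $A_n$ accumulate at $x$, it would follow that for every neighborhood $O_x$ some $M_n\cup\{x\}$ is a neighborhood of $x$, so that the finite unions of members of $\Nn$ (augmented by $x$) would form a countable neighborhood base, i.e.\ $X$ would be first countable at $x$. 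But there are non-first-countable spaces with a countable Pytkeev network at a point: $C_c(\mathbb{Q})$ is a $\Pp_0$-space by Corollary \ref{c-Banakh}, yet being a non-metrizable topological group it is not first countable at $\mathbf{0}$. For a suitable $O_x$ in such a space all your sets $A_n$ accumulate at $x$, no admissible $A$ exists, and the conclusion of the proposition nevertheless holds --- so the hypotheses you actually use are too weak to produce the contradiction. Any correct proof must exploit the compact sets $K_{U_x}$ themselves, e.g.\ by choosing $y_n\in K_{U_x}\setminus M_n$ and using the fact that the resulting infinite set has an accumulation point \emph{inside} $K_{U_x}\subseteq U_x$ (in general different from $x$), and then choosing $U_x$ so as to exclude such accumulation points; this is exactly the step your argument never reaches.

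There is also a second, independent defect: the diagonalization you ``expect'' to certify that $A$ accumulates at $x$ --- arranging that each $Q_k\in\Nn_x$ contains $a_n$ for infinitely many $n$ --- is incompatible with your own requirement $a_n\notin M_n$. Every $Q_k$ satisfies $Q_k\subseteq O_x$, hence $Q_k\subseteq M_j$ for some $j$, while monotonicity gives $A\cap M_j\subseteq\{a_1,\dots,a_{j-1}\}$; therefore $Q_k\cap A$ is automatically finite for every $k$, and the repair you defer to ``Banakh's care'' cannot be implemented even in principle. Your final paragraph, deducing the $ck$-network statement from the local $k$-network statement via the subfamily $\{N\in\Nn : x\in N\}$, is correct and consistent with Remark \ref{r1}(i); the problem lies entirely in the first claim.
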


Recall that a topological space $X$ is called a {\it $k$-space} if for each non-closed subset $A\subset X$ there is a compact subset $K\subset X$ such that $K\cap A$ is not closed in $K$.
\begin{proposition}[\cite{Banakh1}] \label{fB2}
If a topological space $X$ is a regular $k$-space and $\mathcal{N}$ is a $k$-network at a point $x\in X$, then  $\mathcal{N}\vee\mathcal{N} :=\{ C\cup D: C,D\in\mathcal{N}\}$ is a Pytkeev network at $x$.
\end{proposition}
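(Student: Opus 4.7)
The plan is to fix an open neighbourhood $U$ of $x$ and a set $A\subseteq X$ accumulating at $x$, and to produce $N_1,N_2\in\Nn$ with $x\in N_1\cup N_2\subseteq U$ and $(N_1\cup N_2)\cap A$ infinite. By replacing $A$ with $A\setminus\{x\}$ I may assume $x\notin A$; and the fact that $\Nn\vee\Nn$ is a network at $x$ follows by applying the local $k$-network property of $\Nn$ at $x$ to the singleton compact set $\{x\}$.

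First I would extract suitable local data. Because $\Nn$ is a $k$-network at $x$ in the local sense, there is a neighbourhood $V$ of $x$ such that every compact $K\subseteq V$ admits a finite cover by elements of $\Nn$ whose union is contained in $U$. Using regularity of $X$, I shrink $V$ to an open $W$ with $x\in W\subseteq\overline{W}\subseteq V$.

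The key step is to produce a compact set $K\subseteq V$ that both contains $x$ and meets $A$ in an infinite set; this is the only place where the $k$-space hypothesis is used. Apply the $k$-space property to $S:=A\cap W$: since $x\in\overline{S}\setminus S$, the set $S$ is not closed in $X$, so there is a compact $K_0\subseteq X$ with $K_0\cap S$ not closed in $K_0$. A finite subset of the Hausdorff space $K_0$ would be closed, so $K_0\cap S=K_0\cap W\cap A$ must in fact be infinite. I then set $K:=(K_0\cap\overline{W})\cup\{x\}$, which is compact, contained in $V$, contains $x$, and satisfies $K\cap A\supseteq K_0\cap W\cap A$, hence $K\cap A$ is infinite.

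To finish, apply the local $k$-network property of $\Nn$ to the compact set $K\subseteq V$ and obtain a finite subfamily $\FF\subseteq\Nn$ with $K\subseteq\bigcup\FF\subseteq U$. Since $K\cap A$ is infinite and $\FF$ is finite, pigeonhole yields $N_1\in\FF$ with $N_1\cap A$ infinite; since $x\in K\subseteq\bigcup\FF$, there is $N_2\in\FF$ with $x\in N_2$. The set $N_1\cup N_2\in\Nn\vee\Nn$ then satisfies $x\in N_1\cup N_2\subseteq U$ and $(N_1\cup N_2)\cap A\supseteq N_1\cap A$ is infinite, as required. The main subtlety is the production of $K$: the raw $k$-space property does not place $K_0$ near $x$ or guarantee $K_0\subseteq V$, so the trick is to truncate $K_0$ by $\overline{W}$ and adjoin $x$ by hand, which preserves compactness and the inclusion in $V$ while keeping $K\cap A$ infinite.
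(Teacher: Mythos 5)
Your proof is correct and complete: the reduction to $A\setminus\{x\}$, the use of regularity to get $\overline{W}\subseteq V$, the $k$-space trick producing a compact $K_0$ with $K_0\cap W\cap A$ infinite, the truncation $K=(K_0\cap\overline{W})\cup\{x\}$, and the final pigeonhole over a finite cover from the local $k$-network are all valid, and in fact you establish the stronger $cp$-type conclusion $x\in N_1\cup N_2$. The paper itself gives no proof of this proposition (it is cited to Banakh's preprint), and your argument is essentially the standard one for this result.
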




\begin{corollary} \label{c-A0=PA0}
A $k$-space $X$ is a $\Pp_0$-space if and only if $X$ is an $\aleph_0$-space.
\end{corollary}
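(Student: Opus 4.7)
The plan is to verify the two implications separately, relying entirely on the preparatory material just stated.

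For the forward direction ($\Pp_0\Rightarrow\aleph_0$), nothing needs to be done specifically for $k$-spaces: this is already one of the implications recorded after the definition of $\Pp_0$-space (``$\Pp_0$-space $\Rightarrow$ $\aleph_0$-space $\Rightarrow$ cosmic''). If one wanted a self-contained argument, one would take a countable Pytkeev network $\Nn$ in $X$, apply Proposition \ref{fB} pointwise to conclude that $\Nn$ is a local $k$-network at every point, and then invoke Remark \ref{r1}(ii) (regularity of $X$) to upgrade $\Nn$ to a $k$-network in $X$; this gives a countable $k$-network, so $X$ is an $\aleph_0$-space.

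For the reverse direction, suppose $X$ is a $k$-space and an $\aleph_0$-space. Then $X$ is regular and carries a countable $k$-network $\Nn$. The countable family
\[
\Nn\vee\Nn=\{C\cup D:C,D\in\Nn\}
\]
is still countable, and Proposition \ref{fB2} (applied at each point $x\in X$) says that $\Nn\vee\Nn$ is a Pytkeev network at every $x\in X$. Therefore $\Nn\vee\Nn$ is a countable Pytkeev network in $X$, which exactly means that $X$ is a $\Pp_0$-space.

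There is essentially no obstacle here: the corollary is a bookkeeping consequence of Propositions \ref{fB} and \ref{fB2}, with the $k$-space hypothesis used only in the latter. The only item worth double-checking while writing up is that ``$k$-network at a point $x$'' in Proposition \ref{fB2} is the local notion, and that in a regular $k$-space (Remark \ref{r1}(ii)) a $k$-network in $X$ is automatically a $k$-network at each point in this local sense, so that Proposition \ref{fB2} really does apply to the given $\Nn$ pointwise.
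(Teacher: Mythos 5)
Your proposal is correct and follows exactly the route the paper intends: the forward implication is the general chain $\Pp_0\Rightarrow\aleph_0$, and the reverse implication is Proposition \ref{fB2} applied pointwise to the countable $k$-network, using Remark \ref{r1} to pass between the global and local notions of $k$-network. Nothing is missing.
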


Since any topological group is regular, Propositions \ref{fB} and \ref{fB2} yield
\begin{corollary}\label{c-k-CP=CK}
Let $G$ be a topological group which is a $k$-space. Then $G$ has countable $cp$-network at the unit $e$ if and only if $G$ has a countable $ck$-network at $e$.
\end{corollary}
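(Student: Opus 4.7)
My plan is to prove the corollary by combining the two propositions cited in the hint, using only the facts that a topological group is regular and, by hypothesis, a $k$-space.

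For the forward direction ($cp$-network at $e$ $\Rightarrow$ $ck$-network at $e$), I would appeal directly to Proposition \ref{fB}: its second sentence states that any countable $cp$-network at a point $x$ of an arbitrary topological space is already a $ck$-network at $x$. Applied to $x=e$, this gives a countable $ck$-network at $e$ with no further work. (Note that neither regularity nor the $k$-space property is needed for this implication.)

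For the converse ($ck$-network at $e$ $\Rightarrow$ $cp$-network at $e$), let $\Nn$ be a countable $ck$-network at $e$. The plan has three steps. First, by Remark \ref{r1}(i), $\Nn$ is automatically a local $k$-network at $e$. Second, since $G$ is a topological group it is regular, and by assumption it is a $k$-space, so Proposition \ref{fB2} applies at the point $x=e$ and yields that the countable family $\Nn\vee\Nn=\{C\cup D:C,D\in\Nn\}$ is a Pytkeev network at $e$. Third, by Remark \ref{r1}(vi), appending $\{e\}$ to each member produces the countable family $\{N\cup\{e\}:N\in\Nn\vee\Nn\}$, which is a $cp$-network at $e$, completing the proof.

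There is no real obstacle; the only point one needs to be slightly careful about is the interpretation of ``$k$-network at a point'' in the statement of Proposition \ref{fB2}, which in light of the context of Remark \ref{r1} and Proposition \ref{fB} must be read as local $k$-network at that point. Once this is fixed, both directions are immediate compositions of the stated results, and countability is preserved at each step since forming $\Nn\vee\Nn$ and appending a single point to each element keep the family countable.
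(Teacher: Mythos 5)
Your proof is correct and follows essentially the same route as the paper, which derives the corollary directly from Propositions \ref{fB} and \ref{fB2} (using regularity of topological groups and the $k$-space hypothesis for the converse direction). Your explicit handling of the passage from $ck$-network to local $k$-network and from Pytkeev network to $cp$-network via Remark \ref{r1} just fills in details the paper leaves implicit.
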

Note that the condition to be a $k$-space in Corollary \ref{c-k-CP=CK} is essential, see Theorem \ref{tMetPrecomGr}
and Example \ref{exaBohr}.

Recall that a topological space $X$ is called {\it Fr\'{e}chet-Urysohn at a point $x\in X$} if for every subset $A\subset X$ such that $x\in \overline{A}$ there exists a sequence $\{ a_n\}_{n\in\NN}$ in $A$ converging to $x$; $X$ is called {\it Fr\'{e}chet-Urysohn} if it is Fr\'{e}chet-Urysohn at each point $x\in X$.
\begin{proposition}[\cite{Banakh1}]  \label{fB3}
Let $\Nn$ be a $cs^\ast$-network at a point $x$ of a topological space $X$. If  $X$ is Fr\'{e}chet-Urysohn at $x$, then $\Nn$ is a Pytkeev network (actually a $cp$-network) at $x$.
\end{proposition}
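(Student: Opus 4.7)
The plan is to fix a subset $A\subset X$ with $x\in\overline{A}\setminus A$ together with a neighborhood $O_x$ of $x$, and to produce a member of $\Nn$ meeting the $cp$-network requirement. The natural tool to convert the hypothesis on $A$ into something a $cs^\ast$-network can eat is the Fr\'echet--Urysohn property at $x$, which hands us a sequence $(a_n)_{n\in\NN}$ in $A$ converging to $x$.

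The first step I would carry out is an extraction argument to make this sequence \emph{injective}. Since $x\notin A$, no term $a_n$ equals $x$; and if only finitely many distinct values occurred among the $a_n$, then in a Hausdorff space one of those values would itself be the limit, forcing $x\in A$, a contradiction. Hence I can pass to a subsequence (still in $A$ and still converging to $x$) whose terms are pairwise distinct. This small but crucial step is what links ``accumulating at $x$'' to ``countably many distinct witnesses''.

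The second step is a direct appeal to the $cs^\ast$-network property at $x$ applied to this distinct-term sequence and the neighborhood $O_x$: it provides some $N\in\Nn$ with $x\in N\subset O_x$ and with $\{n\in\NN:a_n\in N\}$ infinite. Because the chosen $a_n$ are distinct elements of $A$, the set $N\cap A$ contains infinitely many of them, so it is infinite. This is exactly the condition required for $\Nn$ to be a Pytkeev network at $x$; together with $x\in N$ it is even the $cp$-network condition, yielding the parenthetical strengthening in the statement.

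I do not foresee a real obstacle here: the only subtle point is the injectivity reduction in the first step, and that reduction uses only the Hausdorff assumption (imposed on all spaces in the paper) together with $x\notin A$. Everything else is a one-line invocation of the defining property of a $cs^\ast$-network, so the proof should be short.
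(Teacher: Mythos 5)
Your argument is correct, and since the paper states Proposition \ref{fB3} only with a citation to Banakh's preprint and gives no proof, there is nothing to diverge from: your route (Fr\'echet--Urysohn at $x$ yields a sequence in $A$ converging to $x$; Hausdorffness plus $x\notin A$ rules out finitely many distinct terms, so one may pass to an injective subsequence; the $cs^\ast$-property applied to that subsequence and $O_x$ produces the required $N$ with $N\cap A$ infinite) is exactly the standard argument. The only additions needed for full completeness are trivial: applying the $cs^\ast$-condition to the constant sequence $x,x,\dots$ shows $\Nn$ is a network at $x$ (required in the definition of Pytkeev network), and if $x$ is isolated the same trick forces $\{x\}\in\Nn$, which disposes of the isolated-point clause in the definition of a $cp$-network.
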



\begin{corollary}
In the class of Fr\'{e}chet-Urysohn spaces the concepts of $cp$-network and $cs^\ast$-network are equivalent.
\end{corollary}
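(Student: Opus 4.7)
The plan is to observe that this corollary follows immediately by combining the two pointwise results already established just above, applied at every $x\in X$.

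For the implication $cp$-network $\Rightarrow$ $cs^\ast$-network, Remark \ref{r1}(iii) already records that any $cp$-network at a point $x$ is in particular a $cs^\ast$-network at $x$; this direction needs no Fr\'echet--Urysohn hypothesis.

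For the converse, let $\Nn$ be a $cs^\ast$-network in the Fr\'echet--Urysohn space $X$, and fix $x\in X$. Since $X$ is Fr\'echet--Urysohn at $x$, Proposition \ref{fB3} applies and yields directly that $\Nn$ is a $cp$-network at $x$. Conceptually, the reason this upgrade works is that in a Fr\'echet--Urysohn space, any accumulation of a set $A$ at a point $x$ (i.e.\ $x\in\overline{A}\setminus A$) is already witnessed by a sequence $(a_n)\subset A$ converging to $x$; the $cs^\ast$-condition applied to such a sequence produces $N\in\Nn$ with $x\in N\subset O_x$ and $\{n:a_n\in N\}$ infinite, so $N\cap A$ is infinite, which is precisely the $cp$-network condition. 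Applying this at every $x$ gives the global statement.

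Hence the corollary is a formal consequence of Remark \ref{r1}(iii) together with Proposition \ref{fB3}, and there is no genuine obstacle to overcome beyond what is already encoded in those two results; the Fr\'echet--Urysohn hypothesis enters exactly to permit the invocation of Proposition \ref{fB3}.
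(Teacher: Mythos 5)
Your proposal is correct and matches the paper's (implicit) argument exactly: the corollary is stated as an immediate consequence of Proposition \ref{fB3} for the direction $cs^\ast\Rightarrow cp$ under the Fr\'echet--Urysohn hypothesis, combined with Remark \ref{r1}(iii) for the unconditional converse. Nothing further is needed.
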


To unify notations let us call $\sigma$-spaces by       $0\mbox{-}\sigma$-spaces, and networks by $0$-networks.
\begin{notation} {\em
If $\Nn$ is either a $ck$-,  $cs$-, $cs^\ast$-, $cp$-,  $cn$-network or $0$-network (at a point $x$) in a topological space $X$, we will say  that $\Nn$ is an {\it $\mathfrak{n}$-network} (at $x$).
Set $\mathfrak{N}=\{ ck,  cs, cs^\ast, cp, cn, 0\}$.}
\end{notation}

Definitions \ref{def1} and \ref{def2} allow us to define the following cardinals of topological spaces.
\begin{definition} {\em
Let $x$ be  a point of a topological space $X$ and $\nn\in\mathfrak{N}$. The smallest size $|\mathcal{N}|$ of an $\nn$-network $\mathcal{N}$ at $x$ is called the {\em $\nn$-character of $X$ at the point $x$} and is denoted by $\nn_\chi(X,x)$. The cardinal $\nn_\chi(X)=\sup\{ \nn_\chi(X,x): x\in X\}$ is called the {\em $\nn$-character} of  $X$. The {\em $\nn$-netweight}, $\nn w(X)$, of $X$ is the least cardinality of an $\nn$-network for $X$.}
\end{definition}
Analogously we  define the local $k$-character (at a point $x$) of a topological space $X$.

In the paper we study topological spaces $X$ with countable $\nn$-character (at a point $x\in X$), i.e. spaces $X$ with $\nn_\chi(X)\leq\aleph_0$ (respectively, $\nn_\chi(X,x)\leq\aleph_0$). Recall again  (see \cite{boaz})  that a topological space $X$ has the {\em strong Pytkeev property} if and only if $X$ has a countable Pytkeev network at each point $x\in X$, i.e. if $cp_\chi(X)\leq \aleph_0$.
If a space $X$ is first countable at a point $x\in X$, then any countable base at $x$ is a $cp$-network at $x$. So, every  first countable space $X$ has the strong Pytkeev property.

As usual we denote by $\chi(X,x)$ the character of a topological space $X$ at a point $x$, and the character of $X$ is denoted by $\chi(X)$.
Applying the definition of the  $cn$-network we have the following
\begin{proposition} \label{pCharacter}
If $x$ is a point of a topological space $X$, then $\chi(X,x)\leq 2^{cn_\chi(X,x)}$.
\end{proposition}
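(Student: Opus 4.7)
The plan is to build a neighborhood base at $x$ whose elements are indexed by subfamilies of a minimal $cn$-network at $x$. Let $\Nn$ be a $cn$-network at $x$ of cardinality $\kappa := cn_\chi(X,x)$. For each subfamily $\mathcal{F}\subseteq \Nn$, form the set $U_{\mathcal{F}} := \bigcup \mathcal{F}$, and consider
\[
\BB := \bigl\{ U_{\mathcal{F}} : \mathcal{F}\subseteq \Nn \text{ and } U_{\mathcal{F}} \text{ is a neighborhood of } x\bigr\}.
\]
Since there are at most $2^\kappa$ subfamilies of $\Nn$, we have $|\BB|\leq 2^\kappa$.

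It then suffices to verify that $\BB$ is a neighborhood base at $x$. Given any neighborhood $O$ of $x$, set
\[
\mathcal{F}_O := \{ N\in\Nn : x\in N \subseteq O\}.
\]
By the defining property of a $cn$-network at $x$ (Definition \ref{def2}), the set $U_{\mathcal{F}_O} = \bigcup \mathcal{F}_O$ is a neighborhood of $x$, so $U_{\mathcal{F}_O}\in \BB$; and by construction $U_{\mathcal{F}_O}\subseteq O$. Hence every neighborhood of $x$ contains a member of $\BB$, so $\BB$ is a neighborhood base at $x$, giving $\chi(X,x)\leq |\BB|\leq 2^\kappa = 2^{cn_\chi(X,x)}$.

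There is no real obstacle here: the statement is essentially a direct translation of the $cn$-network condition into the language of neighborhood bases. The only thing to keep track of is that one should index candidate base elements by \emph{arbitrary} subfamilies of $\Nn$ (rather than, say, finite ones), because the $cn$-network condition guarantees that the relevant union $\bigcup\{N\in\Nn:x\in N\subseteq O\}$ is a neighborhood of $x$ without asserting that a single $N$ already lies between $x$ and $O$.
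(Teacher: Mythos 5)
Your proof is correct and is precisely the argument the paper has in mind: the paper states this proposition as an immediate consequence of the definition of a $cn$-network and omits the details, which are exactly your construction of the (at most $2^{\kappa}$-many) unions $U_{\mathcal F}$ that happen to be neighborhoods of $x$. The one point you rightly flag — that one must allow arbitrary subfamilies of $\Nn$, since a single member of $\Nn$ need not sit between $x$ and $O$ — is indeed the only content of the statement.
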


\begin{example}{\em
Let $G$ be a discrete abelian group of cardinality $2^\kappa$, where the cardinal $\kappa$ is uncountable. It is well known that $\chi(G, \tau_b)=2^{|G|}$, where $\tau_b$ is the Bohr topology of $G$. By Proposition \ref{pCharacter} we have $\aleph_0 <\kappa \leq  cn_\chi(G, \tau_b)$. On the other hand, the group $(G, \tau_b)$ has countable tightness by  \cite[Problem 9.9.H]{ArT}. So there are precompact abelian topological groups of countable tightness with arbitrary large $cn$-character. Since any convergent sequence in $(G, \tau_b)$ is essentially constant, we have $cs^{\ast}_\chi (G, \tau_b)=1 < cn_\chi (G, \tau_b).$ }
\end{example}

Proposition \ref{fB} and Remark \ref{r1} imply
\begin{corollary} \label{c1}
Let $x$ be a point of a topological space $X$. Then
\[
cp_\chi(X,x)\leq\aleph_0 \Longrightarrow ck_\chi(X,x)\leq\aleph_0 \Longrightarrow \max\{ cs^{\ast}_\chi(X,x), cn_\chi(X,x) \} \leq\aleph_0;
\]
\[
cn_\chi(X,x)\leq \min\{ cp_\chi(X,x), ck_\chi(X,x)\}, \quad  cn_\chi(X)\leq \min\{ cp_\chi(X), ck_\chi(X)\};
\]
\[
\begin{split}
cs^\ast_\chi(X,x) & \leq \min\{ cp_\chi(X,x), ck_\chi(X,x), cs_\chi(X,x)\} \\
& \leq \max\{ cp_\chi(X,x), ck_\chi(X,x), cs_\chi(X,x)\} \leq \chi(X,x);
\end{split}
\]
\[
cs^\ast_\chi(X)\leq \min\{ cp_\chi(X), ck_\chi(X), cs_\chi(X)\} \leq \max\{ cp_\chi(X), ck_\chi(X), cs_\chi(X)\} \leq \chi(X).
\]
\end{corollary}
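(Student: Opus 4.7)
The plan is to read each inequality directly off the results already established in Section \ref{secRel}, since Corollary \ref{c1} is essentially a bookkeeping summary of Proposition \ref{fB} and Remark \ref{r1}. The first implication $cp_\chi(X,x)\leq\aleph_0 \Rightarrow ck_\chi(X,x)\leq\aleph_0$ is a direct restatement of Proposition \ref{fB}: a countable $cp$-network at $x$ is a $ck$-network at $x$, so the same witnessing family serves both sides. For the second implication $ck_\chi(X,x)\leq\aleph_0 \Rightarrow \max\{cs^{\ast}_\chi(X,x), cn_\chi(X,x)\}\leq\aleph_0$ I would invoke Remark \ref{r1}(iv), which says that any $ck$-network at $x$ is simultaneously a $cn$-network and a $cs^{\ast}$-network at $x$; a countable $ck$-network therefore witnesses countability of both other characters at $x$.

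For the second line $cn_\chi(X,x)\leq\min\{cp_\chi(X,x), ck_\chi(X,x)\}$, the same networks work as witnesses: Remark \ref{r1}(iii) guarantees that every $cp$-network at $x$ is a $cn$-network at $x$, and Remark \ref{r1}(iv) gives the analogous statement for $ck$-networks. Taking a family realizing $cp_\chi(X,x)$ (respectively $ck_\chi(X,x)$) thus bounds $cn_\chi(X,x)$. The global version follows by taking $\sup_{x\in X}$ of both sides.

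For the third line I would supply three bounds for $cs^{\ast}_\chi(X,x)$. The bound by $cp_\chi(X,x)$ comes from Remark \ref{r1}(iii); the bound by $ck_\chi(X,x)$ comes from Remark \ref{r1}(iv); and the bound by $cs_\chi(X,x)$ is immediate from the definitions, since a family $N\subset O_x$ containing all but finitely many terms of a sequence $x_n\to x$ in particular contains infinitely many terms. Finally $\max\{cp_\chi(X,x), ck_\chi(X,x), cs_\chi(X,x)\}\leq\chi(X,x)$ holds because any neighborhood base at $x$ is trivially a $cp$-, $ck$-, and $cs$-network at $x$ (a base witnesses each of the three defining conditions using a single basic neighborhood inside $O_x$). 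Again the global inequality is obtained by taking suprema.

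There is no genuine obstacle: the entire proof amounts to pointing at the right witnessing family for each inequality and citing the appropriate part of Proposition \ref{fB} or Remark \ref{r1}. The only mild care needed is in the verification that $cs\Rightarrow cs^{\ast}$ and that a base realizes each of the stronger network notions at a single point, both of which are one-line checks from the definitions.
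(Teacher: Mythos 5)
Your proposal is correct and matches the paper's approach exactly: the paper gives no written proof, merely stating that Corollary \ref{c1} follows from Proposition \ref{fB} and Remark \ref{r1}, which is precisely the bookkeeping you carry out (together with the routine checks that a base at $x$ is a $cp$-, $ck$- and $cs$-network at $x$ and that $cs$ implies $cs^\ast$).
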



\section{Three new types of generalized metric spaces} \label{secGMS}


Recall (\cite{Guth2}) that a topological space $X$ is called a {\em $cs\mbox{-}\sigma$-space} if it is regular and has a $\sigma$-locally finite $cs$-network. Analogously  we define
\begin{definition} \label{def-cn-space} {\em
For $\nn\in\mathfrak{N}$, a topological space $X$ is called an {\em $\nn\mbox{-}\sigma$-space} if it is regular and has a $\sigma$-locally finite $\nn$-network.}
\end{definition}%
So $cp\mbox{-}\sigma$-spaces are $\Pp$-spaces, $ck\mbox{-}\sigma$-spaces are strict $\aleph$-spaces, $cn\mbox{-}\sigma$-spaces are strict $\sigma$-spaces and $0\mbox{-}\sigma$-spaces are $\sigma$-spaces, respectively.

\begin{remark} {\em
Each $ck\mbox{-}\sigma$-space is both a $cn\mbox{-}\sigma$-space and a $cs^\ast\mbox{-}\sigma$-space.}
\end{remark}

Nagata-Smirnov metrization theorem implies
\begin{proposition} \label{pMetr-Pyt}
Any metrizable space $X$ is a  $\Pp$-space. Each separable metrizable space is a  $\Pp_0$-space.
\end{proposition}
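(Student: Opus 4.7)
\textbf{Plan for Proposition \ref{pMetr-Pyt}.} My strategy is to appeal to the Nagata--Smirnov metrization theorem, which provides a $\sigma$-locally finite base $\mathcal{B}$ for any metrizable space $X$, together with a countable base in the separable case, and then to show that \emph{any} such base is automatically a $cp$-network. This will establish both assertions at once, since a regular space equipped with a $\sigma$-locally finite (respectively countable) $cp$-network is, by Definitions \ref{def4} and by the $\Pp_0$-space definition, a $\Pp$-space (respectively a $\Pp_0$-space).

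To check that $\mathcal{B}$ is a $cp$-network, I would fix $x\in X$ and split into cases according to Definition \ref{def2}. If $x$ is isolated, then $\{x\}$ is a neighborhood of $x$, so the base property of $\mathcal{B}$ forces some $B\in\mathcal{B}$ with $x\in B\subseteq\{x\}$, forcing $B=\{x\}\in\mathcal{B}$, which is the first alternative of the $cp$-network definition. If $x$ is not isolated, I would verify the second alternative: given an arbitrary $A\subseteq X$ with $x\in\overline{A}\setminus A$ and an arbitrary neighborhood $O_x$ of $x$, pick $N\in\mathcal{B}$ with $x\in N\subseteq O_x$ by the base property, and then argue that $N\cap A$ is infinite.

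The last step uses first countability, hence the metric $d$ inducing the topology of $X$. Suppose towards a contradiction that $N\cap A$ were finite, say $\{a_1,\dots,a_k\}$ (with $k=0$ allowed). Since $x\notin A$, we have $r:=\min_{1\le i\le k} d(x,a_i)>0$ (interpreting the minimum over the empty set as, say, $1$). Then $N\cap\{y\in X:d(x,y)<r\}$ is an open neighborhood of $x$ disjoint from $A$, contradicting $x\in\overline{A}$. Thus $N\cap A$ is infinite, completing the verification.

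I do not anticipate any genuine obstacle; the argument is essentially a translation of well-known facts. The only small subtlety is bookkeeping around the bifurcated definition of $cp$-network, i.e.\ making sure that the isolated-point clause is satisfied by an arbitrary Nagata--Smirnov base, which comes for free since a base must contain every open singleton.
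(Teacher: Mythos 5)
Your proof is correct and follows essentially the same route as the paper: invoke the Nagata--Smirnov theorem to obtain a $\sigma$-locally finite (or, in the separable case, countable) open base and observe that any open base is a $cp$-network. The paper dismisses the latter observation with a ``clearly''; your verification of it (the isolated-point clause, and the fact that in a $T_1$ space an accumulation point of $A$ has every neighborhood meeting $A$ in an infinite set) is exactly the intended justification.
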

\begin{proof}
By \cite[4.4.4]{Eng},  $X$ has a $\sigma$-locally finite open base $\DD$. Clearly, $\DD$ is also a $cp$-network for $X$. If additionally $X$ is separable,  it is clear that any countable open base of $X$ is  a countable $cp$-network for $X$.
\end{proof}

It is known  (see  \cite{Gao}) that each countable $cs^\ast$-network in a regular space $X$ is a $k$-network.  Next proposition  generalizes this fact.
\begin{proposition} \label{pA-cs-ck}
Any $\sigma$-locally finite $cs^\ast$-network in a regular topological space $X$ is a $k$-network for $X$. Consequently, each countable $cs^\ast$-network in $X$ is a $k$-network.
\end{proposition}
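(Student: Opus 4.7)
The plan is to reduce the $\sigma$-locally finite case to a countable one by trapping $K$ inside a countable subfamily of $\Nn$. Write $\Nn=\bigcup_{n\in\NN}\Nn_n$ with each $\Nn_n$ locally finite, fix a compact set $K$ and an open set $U$ with $K\subset U$, and for each $n$ set
\[
\mathcal{A}_n := \{N\in\Nn_n : N\cap K\neq\emptyset \text{ and } N\subset U\}.
\]
First I would show that $\mathcal{A}_n$ is finite: by local finiteness each $x\in K$ has an open neighborhood meeting only finitely many members of $\Nn_n$, and compactness of $K$ gives a finite subcover, so only finitely many $N\in\Nn_n$ meet $K$ at all. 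Thus $\mathcal{A}:=\bigcup_n\mathcal{A}_n$ is countable, and since any $cs^\ast$-network is automatically a network (apply the definition to the constant sequence at $x$), $\mathcal{A}$ already covers $K$ with every member contained in $U$.

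Next I would argue by contradiction that some finite subfamily of $\mathcal{A}$ covers $K$. Assuming the contrary, enumerate $\mathcal{A}=\{M_j\}_{j\in\NN}$ and choose
\[
x_m\in K\setminus(M_1\cup\cdots\cup M_m)
\]
for each $m$. If a subsequence $(x_{m_k})$ converges to some $x\in K\subset U$, then the $cs^\ast$-network property at $x$ applied to the neighborhood $U$ produces $N\in\Nn$ with $x\in N\subset U$ and $\{k : x_{m_k}\in N\}$ infinite; such an $N$ meets $K$ at $x$ and is contained in $U$, hence $N=M_j$ for some $j$. This contradicts $x_{m_k}\notin M_j$ for all $m_k\geq j$, and so a finite subfamily $\FF\subset\mathcal{A}\subset\Nn$ satisfies $K\subset\bigcup\FF\subset U$, as required.

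The decisive step is producing the convergent subsequence, since compact Hausdorff spaces need not be sequentially compact in general. My remedy is to observe that the restricted family $\{N\cap K : N\in\Nn\}$ is a $\sigma$-locally finite network on $K$, and that the finiteness argument of the first paragraph, now applied inside $K$, shows each layer $\Nn_n|_K$ to be finite. Hence $K$ is a compact Hausdorff space carrying a countable network, and in such spaces the network weight coincides with the weight, so $K$ is second countable, hence metrizable, and in particular sequentially compact. This closes the gap. The stated consequence for countable $cs^\ast$-networks is then immediate, since any countable family is trivially $\sigma$-locally finite.
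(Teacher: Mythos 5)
Your proof is correct and follows essentially the same route as the paper's: reduce to a countable subfamily via local finiteness plus compactness of $K$, use metrizability of $K$ to extract a convergent subsequence from a diagonal choice of uncovered points, and apply the $cs^\ast$-property at the limit to reach a contradiction. The only cosmetic differences are that you prove the metrizability of $K$ directly (compact Hausdorff with a countable network is second countable) where the paper cites Gruenhage, and you conclude that a finite subfamily covers all of $K$ rather than all but finitely many points.
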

\begin{proof}
Let $\DD =\bigcup_n \DD_n$ be an increasing $\sigma$-locally finite $cs^\ast$-network for $X$  and let $K$ be a compact subset of an open set $U\subset X$. We have to find an $n\in\NN$ and a finite subfamily $\FF$ of $\DD_n$ such that $K\subset\bigcup\FF \subset U$.

For each $x\in K$ and every $n\in\NN$ take an open neighborhood $U_n(x)$ of $x$ such that $U_n(x)\subset U$ and $U_n(x)$ intersects with only finite subfamily $T_n(x)$ of $\DD_n$. Set
\[
R_n(x):= \{ D\in T_n(x): \ D\subset U \mbox{ and } D\cap K\not=\emptyset \}.
\]
Since $K$ is compact there are $x^n_1,\dots,x^n_{s_n} \in K$ such that $K\subset \cup_{i=1}^{s_n} U_n(x^n_i)$. Set
\[
A_n := \bigcup_{j=1}^n \bigcup_{i=1}^{s_j} \left\{ D: \, D\in R_j(x^j_i) \right\}.
\]
Clearly,  $A_1 \subset A_2 \subset\dots \subset U$. Since $\DD$ is a network for $X$, it is enough to show that $K\setminus A_n$ is finite for some $n\in\NN$.

Suppose for a contradiction that $K\setminus A_n$ is infinite for every $n\in\NN$. Then we can choose a sequence $\{ x_n\}$ of distinct elements of $K$ such that $x_n\not\in A_n$ for every $n\in\NN$. By \cite[Corollary 4.7]{gruenhage}, $K$ is metrizable. So without loss of generality we may assume that  $\{ x_n\}$ converges to a point $z\in K$. As $\DD$ is a $cs^\ast$-network, there is $q\in \NN$ and $D\in \DD_q$ such that $z\in D\subset U$ and $D\cap \{ x_n\}$ is infinite. If an index $i$ is such that $z\in U_q(x^q_i)$, then $D\in R_q(x^q_i)$, so $D\subset A_q$. By the construction of $ \{ x_n\}$, we have $x_n\not\in D$ for every $n\geq q$. Thus $D\cap \{ x_n\}$ is finite, a contradiction.
\end{proof}

Since any $cp$-network is trivially a $cs^\ast$-network, Proposition \ref{pA-cs-ck} implies that any $\Pp_0$-space is an $\aleph_0$-space (see \cite{Banakh}).

Next theorem shows that some types of $\sigma$-locally finite networks coincide.
\begin{theorem} \label{tA-cs}
For a topological space $X$ the following assertions are equivalent
\begin{enumerate}
\item[{\rm (i)}] $X$ is an $\aleph$-space;
\item[{\rm (ii)}] {\em (\cite{foged})} $X$ is a $cs\mbox{-}\sigma$-space;
\item[{\rm (iii)}] {\em (see \cite{Nagata})} $X$ is a $cs^\ast\mbox{-}\sigma$-space.
\end{enumerate}
\end{theorem}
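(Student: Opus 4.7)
The plan is to establish the cycle (ii) $\Rightarrow$ (iii) $\Rightarrow$ (i) $\Rightarrow$ (ii).

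The implication (ii) $\Rightarrow$ (iii) is immediate: any $cs$-network is automatically a $cs^\ast$-network, because from the inclusion $\{x\}\cup\{x_n : n\geq k\}\subset N\subset O_x$ supplied at a convergent sequence $x_n\to x$ one reads off that $\{n\in\NN : x_n\in N\}$ is cofinite in $\NN$, hence infinite. So the same $\sigma$-locally finite family witnesses both conditions. The implication (iii) $\Rightarrow$ (i) is then a direct application of Proposition \ref{pA-cs-ck}: in a regular space every $\sigma$-locally finite $cs^\ast$-network is automatically a $k$-network, hence a $cs^\ast$-$\sigma$-space is an $\aleph$-space.

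The substantive direction is (i) $\Rightarrow$ (ii), which is Foged's theorem from \cite{foged}. My plan is to invoke Foged's refinement procedure rather than reprove it. Schematically, start with a $\sigma$-locally finite $k$-network $\Nn=\bigcup_n\Nn_n$ with each $\Nn_n$ locally finite and, after re-indexing, $\Nn_n\subseteq\Nn_{n+1}$. From this data one builds a new $\sigma$-locally finite family $\Nn'$ whose members are generated by finite subfamilies $\FF\subset\Nn_n$ in such a way that, for a convergent sequence $x_n\to x$ and a neighborhood $O_x$, the $k$-network property applied to the compact set $\{x\}\cup\{x_n\}\subset O_x$ yields a finite $\FF\subset\Nn_n$ with $\{x\}\cup\{x_n\}\subset\bigcup\FF\subset O_x$, and the corresponding member of $\Nn'$ contains $\{x\}\cup\{x_n : n\geq k\}$ for some $k$, which is exactly what a $cs$-network at $x$ must provide.

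The main obstacle is the preservation of $\sigma$-local finiteness under this refinement: indexing by all finite subfamilies of each $\Nn_n$ would in general destroy local finiteness. Foged's resolution is to stratify $\Nn'$ by the cardinality of the generating tuple and exploit local finiteness of each $\Nn_n$ so that only finitely many tuples of a given size meet any fixed neighborhood. A parallel argument, producing a $cs^\ast$-refinement of a $\sigma$-locally finite $k$-network directly, is attributed to Nagata in \cite{Nagata} and would serve as an alternative route to (iii). In the write-up I would therefore simply cite \cite{foged} (and \cite{Nagata}) for (i) $\Rightarrow$ (ii) and combine them with the two trivial implications above to close the cycle.
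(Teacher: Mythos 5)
Your proposal is correct and follows essentially the same route as the paper: cite Foged for the substantive implication between $\aleph$-spaces and $cs\mbox{-}\sigma$-spaces, note that $cs$-networks are $cs^\ast$-networks, and close the loop with Proposition \ref{pA-cs-ck} (which applies because $\nn\mbox{-}\sigma$-spaces are regular by definition). The only cosmetic difference is that you arrange the three conditions in a cycle and so need only one direction of Foged's theorem, whereas the paper quotes the full equivalence (i)$\Leftrightarrow$(ii) from \cite{foged}; the content is identical.
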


\begin{proof}
(i)$\Leftrightarrow$(ii) was proved by Foged \cite{foged}. (ii)$\Rightarrow$(iii) is trivial, and (iii)$\Rightarrow$(i) follows from Proposition \ref{pA-cs-ck} (this also follows from Lemma 1.17 and Theorem 1.4 of \cite{Nagata}).
\end{proof}

The following theorem is essentially used in the proof of Theorem \ref{t-strong-Pyt-A}.
\begin{theorem} \label{t-Lind-Set}
For $\nn\in\{ 0, cn, cs^\ast, ck, cp\}$, let $X$ be an $\nn\mbox{-}\sigma$-space and $\DD=\bigcup_{n\in\NN} \DD_n$ be a closed increasing $\sigma$-locally finite $\nn$-network for  $X$. Then for every Lindel\"{o}f subset $S$ of $X$ there exists a sequence $\Nn=\{ D_k\}_{k\in\NN}\subset \DD$  which is an $\nn$-network at each point of $S$.

In the case $\nn\in\{ ck,cp\}$, the family $\Nn$ satisfies additionally the following property: if $K\subset S\cap U$ with $K$ compact and $U$ open, then there is an open subset $W$ of $X$ such that
\begin{itemize}
\item[{\rm (a)}] $K\subset W \subset \bigcup_{k\in I(U)} D_k \subset U$, where $I(U)=\{ k\in\NN : D_k\subset U\}$, and
\item[{\rm (b)}]  for each compact subset $C$ of $W$ there is a finite subfamily $\alpha$ of $I(U)$ for which $C\subset \bigcup_{k\in\alpha} D_k$.
\end{itemize}
\end{theorem}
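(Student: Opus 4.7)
The plan is to exploit the $\sigma$-local finiteness of $\DD$ together with the Lindel\"of property of $S$ to trim each layer $\DD_n$ to a countable subfamily that still sees every point of $S$, and then to use the uniform feature of Definitions \ref{def1} and \ref{def2} that every witness of the $\nn$-network property at a point $x$ (or, for $ck$-networks, every member of the witnessing finite family) must contain $x$.

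For the construction, for each $n\in\NN$ and each $x\in X$ choose an open neighborhood $V_n(x)$ of $x$ so that $\DD_n(x):=\{D\in\DD_n:\, D\cap V_n(x)\ne\emptyset\}$ is finite; by Lindel\"ofness of $S$, extract a countable subcover $\{V_n(x^n_i):i\in\NN\}$ of $\{V_n(x):x\in S\}$ and put $\Nn:=\bigcup_{n,i}\DD_n(x^n_i)$, enumerated as $\{D_k\}_{k\in\NN}$. The key observation is that whenever $x\in S$ and $x\in D\in\DD_n$, picking $i$ with $x\in V_n(x^n_i)$ yields $x\in D\cap V_n(x^n_i)$, hence $D\in\DD_n(x^n_i)\subseteq\Nn$; thus every element of $\DD$ that contains a point of $S$ already lies in $\Nn$. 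Inspecting the five definitions shows that in each case every witness of the $\nn$-network property at $x$ must contain $x$, so every witness supplied by $\DD$ at a point of $S$ is already in $\Nn$, and $\Nn$ inherits the $\nn$-network property at every point of $S$.

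For the strengthening (a)--(b) in the cases $\nn\in\{ck,cp\}$, apply Proposition \ref{fB} (with $cp$ reducing to $ck$ for countable networks) to conclude that $\Nn$ is a countable $ck$-network at every $x\in S$. Given compact $K\subset S\cap U$ with $U$ open, for each $x\in K$ use the $ck$-network property with $O_x=U$ to choose a neighborhood $U_x$ of $x$ such that every compact subset of $U_x$ is covered by a finite subfamily of $\Nn$ lying in $U$; by regularity of $X$ shrink to an open $W_x\ni x$ with $\overline{W_x}\subset U_x$. Cover $K$ by finitely many $W_{x_1},\dots,W_{x_m}$ and set $W:=\bigcup_j W_{x_j}$. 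For any compact $C\subset W$, the pieces $C\cap\overline{W_{x_j}}$ are compact subsets of $U_{x_j}$ covering $C$; each is covered by a finite subfamily of $\Nn$ inside $U$, and the union of the resulting indices is a finite $\alpha\subseteq I(U)$ with $C\subseteq\bigcup_{k\in\alpha}D_k$, which is (b). Property (a) then follows by applying (b) to singletons $C=\{y\}$, $y\in W$, together with the automatic inclusion $\bigcup_{k\in I(U)}D_k\subseteq U$.

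The principal difficulty is the verification of (b): the $ck$-network property provides finite covers only for compact subsets of a single $U_x$, so an arbitrary compact subset of the union $W$ must be split into compact pieces each lying in one $U_{x_j}$. The regularity-based shrinking $\overline{W_{x_j}}\subset U_{x_j}$ is exactly what turns $C\cap\overline{W_{x_j}}$ into a compact subset of $U_{x_j}$, making the $ck$-network property applicable.
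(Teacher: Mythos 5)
Your proof is correct and follows essentially the same strategy as the paper's: trim each layer $\DD_n$ to a countable subfamily via local finiteness and Lindel\"ofness, observe that every witness of the $\nn$-network property at a point of $S$ contains that point and hence survives into $\Nn$, and establish (a)--(b) by a regularity shrinking $\overline{W_{x_j}}\subset U_{x_j}$ plus a finite subcover and the splitting $C=\bigcup_j (C\cap\overline{W_{x_j}})$. The only (harmless) deviation is that you keep \emph{all} members of $\DD_n$ meeting the chosen neighborhoods rather than using closedness of the sets in $\DD$ to arrange that these are exactly the members containing the base point, so your argument does not actually invoke the closedness hypothesis.
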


\begin{proof}
Since $\DD$ contains closed sets, for every $x\in S$ and every $n\in\NN$ choose a neighborhood $U_n(x)$ of $x$ such that $U_n(x)$ intersects with a finite subfamily $T_n(x)$ of $\DD_n$ such that
\begin{equation} \label{eB}
T_n(x)= \{ D\in\DD_n : D\cap U_n(x) \not=\emptyset \} = \{ D\in\DD_n : x\in D\}.
\end{equation}
So, for every $x\in S$ the set
\[
T(x):= \{ D\in\DD : \ x\in D \} = \bigcup_{n\in\NN}  T_n(x)
\]
is a countable $\nn$-network at $x$.

As $S$ is Lindel\"{o}f, for every $n\in\NN$ there is a sequence $\{ x_j^n\}_{j\in\w} \in S$ such that the set $\bigcup_{j\in\omega} U_n(x_j^n)$ is an open neighborhood of $S$. Let $\Nn=\{ D_k\}_{k\in\NN}$ be an enumeration of the family $\bigcup_{n\in\NN} \bigcup_{j\in\w} T(x_j^n)\subset \DD$. We show that $\Nn$ is as desired.

(1) Assume that $\DD$ is a network.  Fix arbitrarily $x\in S$ and an open neighborhood $O_x$ of $x$. Then there is $n\in\NN$ and $D\in\DD_n$ such that $x\in D\subset O_x$. Choose $j_0\in\w$ such that $x\in U_n(x_{j_0}^n)$. Then $D\cap U_n(x_{j_0}^n) \not=\emptyset$; so, by (\ref{eB}), $D\in T_n(x_{j_0}^n)$. Thus $D\in \Nn$.

Assume in addition that $\DD$ is a $cn$-network. We showed above that for every $D\in\DD$ with $x\in D\subset O_x$ there are $n\in\NN$ and $j_0\in\w$ such that $D\in T_n(x_{j_0}^n)\subset \Nn$. Then
\[
\bigcup\{ D_k\in\Nn : x\in D_k \subset O_x\} = \bigcup\{ D\in\DD : x\in D \subset O_x\}
\]
is a neighborhood of $x$ by the definition of $cn$-network.

(2) Assume that $\DD$ is a $cs^\ast$-network.  Fix arbitrarily $x\in S$, an open neighborhood $O_x$ of $x$ and a sequence $\{ x_n\}_{n\in\w}$ converging to $x$. Since $\DD$ is a $cs^\ast$-network in $X$, there is $n\in\NN$ and $D\in\DD_n$ such that $x\in D\subset O_x$ and the set $\{n\in\w : x_n \in D\}$ is infinite.  Choose $j_0\in\w$ such that $x\in U_n(x_{j_0}^n)$. Then $D\cap U_n(x_{j_0}^n) \not=\emptyset$; so, by (\ref{eB}), $D\in T_n(x_{j_0}^n)$. Thus $D\in \Nn$.

(3) Assume that $\DD$ is a $ck$-network. Fix arbitrarily $x\in S$ and an open neighborhood $O_x$ of $x$. Since $\DD$ is a $ck$-network at $x$, there exists a neighborhood $V$ of $x$ such that $V\subset O_x$ and for each compact subset $C$ of $V$ there are $n\in\NN$ and  a finite subset $\FF$ of $\DD_n$ such that $x\in \bigcap\FF$ and $C\subset\bigcup\FF\subset V$. Choose $j_0\in\w$ such that $x\in U_n(x_{j_0}^n)$. Then $D\cap U_n(x_{j_0}^n) \not=\emptyset$ for every $D\in\FF$; so, by (\ref{eB}), $D\in T_n(x_{j_0}^n)$. Thus $D\in \Nn$ and $\FF\subset\Nn$. Therefore $\Nn$ is a $ck$-network at $x$.

(4) Assume that $\DD$ is a $cp$-network. We show that $\Nn$ is a $cp$-network at each point $x\in S$. Fix arbitrary $x\in S$. If $x$ is isolated in $X$, then $\{ x\}\in\DD$ by definition. If $n\in\NN$ and $j\in\w$ are such that $x\in U_n(x^n_j)$, then $\{ x\}\cap U_n(x^n_j)\not=\emptyset$. Then (\ref{eB}) implies that $x=x^n_j$, and hence $\{ x\} \in\Nn$. Assume that $x$ is non-isolated. Fix an open neighborhood $O_x$ of $x$ and a subset $A\subset X$ with $x\in \overline{A}\setminus A$. Since $\DD$ is a $cp$-network in $X$, there is $n\in\NN$ and $D\in\DD_n$ such that $x\in D\subset O_x$ and $D\cap A$ is infinite.  Choose $j_0\in\w$ such that $x\in U_n(x_{j_0}^n)$. Then $D\cap U_n(x_{j_0}^n) \not=\emptyset$; so, by (\ref{eB}), $D\in T_n(x_{j_0}^n)$. Thus $D\in \Nn$.

Now assume  $\nn\in\{ ck,cp\}$. Since any countable $cp$-network at a point $x$ is also a $ck$-network at $x$ by Proposition \ref{fB}, we can assume that $\nn=ck$.  Let $K\subset S\cap U$ with $K$ compact and $U\subset X$ open. For every $x\in K$ the set
\[
I(x):= \{ k\in\NN : \ x\in D_k \subset U\}
\]
is countable. Since $\Nn$ is a $ck$-network at $x$  and hence a $cn$-network at $x$, the set $O(x):=\bigcup \{ D_k: k\in I(x)\}$ is a neighborhood of $x$ and $O(x)\subset U$. By the definition of $ck$-network at $x$ and since $X$ is regular, there are open neighborhoods $W(x)$ and $V(x)$ of $x$ such that $\overline{W(x)}\subset V(x)\subset O(x)$ and for each compact subset $C$ of $V(x)$ there is a finite subset $\FF$ of $\{ D_k : k\in I(x)\}$ with $C\subset\bigcup\FF\subset O(x)$. Since $K$ is compact there are $z_1,\dots, z_s \in K$ such that the set
\[
W:= \bigcup_{j=1}^s W(z_j)
\]
is an open neighborhood of $K$. Clearly, (a) holds by construction. Let us check (b). Let $C$ be an arbitrary compact subset of $W$. Then for every $1\leq j\leq s$, there is a finite subfamily $\alpha_j \subset I(z_j)$ such that
\[
\overline{W(z_j)} \cap C \subset \bigcup_{k\in\alpha_j} D_k \subset O(z_j).
\]
Set $\alpha:=\cup_{j=1}^s \alpha_j$. Then
\[
C =\bigcup_{j=1}^s (\overline{W(z_j)} \cap C) \subset \bigcup_{j=1}^s \bigcup_{k\in\alpha_j} D_k =\bigcup_{k\in\alpha} D_k  \subset   \bigcup_{j=1}^s O(z_j) \subset U,
\]
and hence (b) holds true.
\end{proof}

Next obvious corollary of Theorem \ref{t-Lind-Set} shows that the property to be an $\nn\mbox{-}\sigma$-space is stronger than  the property to have countable $\nn$-character.
\begin{corollary} \label{c-Aleph}
For $\nn\in\mathfrak{N}$, if $X$ is an $\nn\mbox{-}\sigma$-space, then $\nn_\chi(X)\leq\aleph_0$.
\end{corollary}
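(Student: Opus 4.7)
The plan is to deduce the corollary immediately from Theorem \ref{t-Lind-Set} by specializing to the simplest possible Lindel\"of set, namely a singleton. Fix any $x \in X$ and set $S := \{x\}$; this is compact, hence Lindel\"of. Assuming the hypothesis of Theorem \ref{t-Lind-Set} is met, the theorem produces a countable family $\Nn = \{D_k\}_{k\in\NN} \subset \DD$ that is an $\nn$-network at every point of $S$, in particular at $x$. This yields $\nn_\chi(X,x) \leq \aleph_0$, and since $x \in X$ was arbitrary, $\nn_\chi(X) \leq \aleph_0$.

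The only preliminary work is a routine reduction to ensure that the given $\sigma$-locally finite $\nn$-network meets the ``closed increasing'' proviso of Theorem \ref{t-Lind-Set}. Increasingness is obtained by passing from the layers $\DD_n$ to their cumulative unions $\DD_1 \cup \cdots \cup \DD_n$; a finite union of locally finite families is locally finite. Closedness is obtained by replacing each $D$ by $\overline{D}$. Since $X$ is regular, this operation preserves the $\nn$-network property for every $\nn \in \mathfrak{N}$ (given a neighborhood $O_x$ of $x$, first choose an open $V$ with $x \in V \subset \overline{V} \subset O_x$, then apply the network condition inside $V$ and pass to closures), and it preserves $\sigma$-local finiteness because an open set meets $\overline{D}$ if and only if it meets $D$.

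The case $\nn = cs$ is not literally covered by Theorem \ref{t-Lind-Set}, so it deserves a brief extra comment. One option is to mimic case (2) of that proof, with the $cs^\ast$-network condition replaced throughout by the (stronger) $cs$-network condition: given a sequence $(x_m)_{m\in\NN}$ converging to $x$, one chooses $n$, $k$, and $D \in \DD_n$ with $\{x\} \cup \{x_m : m \geq k\} \subset D \subset O_x$; the subsequent step of selecting $j_0$ with $x \in U_n(x^n_{j_0})$ and deducing $D \in T_n(x^n_{j_0}) \subset \Nn$ goes through unchanged. Alternatively, Theorem \ref{tA-cs} identifies the classes of $cs\mbox{-}\sigma$-spaces and $cs^\ast\mbox{-}\sigma$-spaces, which reduces the $cs$ case to the $cs^\ast$ case already handled. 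No genuine obstacle arises; the substantive work has already been done inside Theorem \ref{t-Lind-Set}, and the corollary is truly immediate once that theorem is in hand.
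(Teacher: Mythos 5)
Your proof is correct and matches the paper's intended argument: the corollary is deduced from Theorem \ref{t-Lind-Set} applied to the Lindel\"of set $S=\{x\}$ (equivalently, from the observation made in its proof that $T(x)=\{D\in\DD : x\in D\}$ is countable by local finiteness and is an $\nn$-network at $x$), and your preliminary reduction to a closed increasing $\sigma$-locally finite network is the standard one. One small caution on the $cs$ case, which Theorem \ref{t-Lind-Set} indeed omits: your first option (rerunning case (2) with the $cs$-condition) is the one to use, since the alternative via Theorem \ref{tA-cs} would only yield countable $cs^{\ast}$-character rather than countable $cs$-character.
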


Theorem \ref{tA-cs} and Corollary \ref{c-Aleph} immediately imply the  following result which might be also noticed from the proof of  \cite[Corollary 2.18]{Sak} and is proved in \cite{GK2}.
\begin{corollary} \label{cAleph-cs}
Each $\aleph$-space $X$ has countable  $cs^\ast$-character.
\end{corollary}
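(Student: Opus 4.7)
The plan is to combine the two results stated immediately before. By Theorem~\ref{tA-cs}, every $\aleph$-space is a $cs^\ast\mbox{-}\sigma$-space, so $X$ admits a $\sigma$-locally finite $cs^\ast$-network. Since $cs^\ast$ is one of the cases $\{0, cn, cs^\ast, ck, cp\}$ explicitly covered by Theorem~\ref{t-Lind-Set}, Corollary~\ref{c-Aleph} applies with $\nn = cs^\ast$ and delivers $cs^\ast_\chi(X) \leq \aleph_0$, which is precisely the statement that $X$ has countable $cs^\ast$-character.

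Unpacking how Corollary~\ref{c-Aleph} is used here: fix an arbitrary point $x \in X$ and take $S = \{x\}$, which is trivially Lindel\"{o}f. Theorem~\ref{t-Lind-Set} then extracts from the ambient $\sigma$-locally finite $cs^\ast$-network a countable subfamily that is a $cs^\ast$-network at $x$. Doing this at every point of $X$ gives $cs^\ast_\chi(X,x) \leq \aleph_0$ for each $x$, and hence $cs^\ast_\chi(X) \leq \aleph_0$.

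The only routine technical point to check is that Theorem~\ref{t-Lind-Set} requires a \emph{closed} increasing $\sigma$-locally finite $\nn$-network, whereas Theorem~\ref{tA-cs} a priori supplies only a $\sigma$-locally finite $cs^\ast$-network. This is not a genuine obstacle: if $\DD$ is such a network, then the family $\{\overline{D} : D \in \DD\}$ is still $\sigma$-locally finite, because local finiteness is preserved under taking closures, and it is still a $cs^\ast$-network, since if some $D$ witnesses the $cs^\ast$-property for a sequence $x_n \to x$ then $\overline{D}$ witnesses it as well. Replacing $\DD_n$ with $\DD_1 \cup \cdots \cup \DD_n$ makes the $\sigma$-indexing increasing. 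After these cosmetic modifications Theorem~\ref{t-Lind-Set} applies directly, and there is essentially no substantive obstacle in the argument: all the real work has been done upstream, in Foged's characterization (Theorem~\ref{tA-cs}) and in the Lindel\"{o}f extraction argument (Theorem~\ref{t-Lind-Set}).
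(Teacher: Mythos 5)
Your proof is correct and follows exactly the paper's route: Theorem~\ref{tA-cs} gives that every $\aleph$-space is a $cs^\ast\mbox{-}\sigma$-space, and Corollary~\ref{c-Aleph} (via Theorem~\ref{t-Lind-Set}) then yields $cs^\ast_\chi(X)\leq\aleph_0$. Your extra remark on passing to closures is fine, with the small caveat that the containment $\overline{D}\subset O_x$ uses the regularity of $X$ (which holds by the definition of an $\aleph$-space).
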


The following example connects two examples kindly proposed us by Taras Banakh. This example shows that the classes of $cn\mbox{-}\sigma$-spaces and $ck\mbox{-}\sigma$-space are much smaller than the classes of $\sigma$-spaces and $\aleph$-spaces respectively.

\begin{example}[Banakh] \label{exa-A-non-CS} {\em
For any uncountable cardinal $\kappa$ there is a paracompact $\aleph$-space $\Omega$ of tightness $\kappa$ with a unique non-isolated point $\infty$ at which $cn_\chi(\Omega,\infty)=\kappa$. In particular, the space $\Omega$ is not a strict $\sigma$-space.}
\end{example}

\begin{proof}
Consider the space $\Omega=\{\infty\}\cup(\kappa\times \w\times \w)$ in which all points $x\in\kappa\times\w\times\w$ are isolated, while a neighborhood base at $\infty$ is formed by the sets
\[
U_{C,n,\varphi}=\{\infty\}\cup\{(\alpha,k,m)\in \kappa\times\w\times\w:\alpha\in \kappa\setminus C,\;k\geq n,\;m\ge\varphi(\alpha,k)\},
\]
where $C\subset \kappa$ is a subset of cardinality $|C|<\kappa$, $n\in\w$ and $\varphi:\kappa\times \w\to\w$ is a function. It is easy to see that the space $\Omega$ does not contain infinite compact subsets. Consequently, each network in $\Omega$ is a $k$-network. Since $\Omega$ has a unique non-isolated point, it is paracompact.

For every $n,m\in\w$ consider the following discrete families in $\Omega$
\[
\mathcal{N}_n=\big\{\{(\alpha,k,m)\}:\alpha\in\Omega,\;k\le n,\;m\in\w\}
\]
and
\[
\mathcal{N}_{n,m}=\big\{\{(\alpha,k)\}\times [m,\w):\alpha\in\Omega,\;k\le n\}.
\]
 Then the family
\[
\mathcal{N}=\big\{\{\infty\}\big\}\cup\bigcup_{n\in\w}\mathcal{N}_n\cup\bigcup_{n,m\in\w}\mathcal{N}_{n,m}
\]
is a $\sigma$-discrete network in $\Omega$; so $\Omega$ is an $\aleph$-space.

{\em Claim 1.} We claim that the family $\mathcal{N}$ is a Pytkeev network for $\Omega$, i.e., for every open set $U\subset \Omega$ and a set $A$ accumulating at a point $x\in U$ there is a set $N\in\mathcal{N}$ such that $N\subset U$ and $N\cap A$ is infinite.

Since all points of $\Omega$ except for $\infty$ are isolated, it suffices to check that $\mathcal{N}$ is a Pytkeev network at $\infty$.
Fix a neighborhood $U\subset \Omega$ of $\infty$ and a set $A\subset \Omega$ accumulating at $\infty$. Without loss of generality we can assume that the neighborhood $U$ is of basic form $U=U_{C,n,\varphi}$ for some set $C\subset\kappa$ with $|C|<\kappa$, $n\in\w$ and $\varphi:\kappa\times\w\to\w$. We claim that for some pair $(\alpha,k)\in(\kappa\setminus C)\times [n,\w)$ the intersection $A\cap (\{(\alpha,k)\}\times\w)$ is infinite. Indeed, otherwise, we could find a function $\psi:\kappa\times \w\to\w$ such that $\psi\ge\varphi$ and $A\cap (\{(\alpha,k)\}\times\w)\subset \{(\alpha,k)\}\times\big[0,\psi(\alpha,k)\big)$ for every $(\alpha,k)\in(\kappa\setminus C)\times [n,\w)$. Then $O_{C,n,\psi}$ is a neighborhood of $\infty$, disjoint with the set $A\setminus\{\infty\}$, which means that $A$ does not accumulate at $\infty$. This contradiction shows that for some $(\alpha,k)\in(\kappa\setminus C)\times[n,\w)$ the set $A$ has infinite intersection with the set $\{(\alpha,k)\}\times \w$ and hence with the set $N=\{(\alpha,k)\}\cup[\varphi(\alpha,k),\w)$. Taking into account that $N\in\mathcal{N}$ and $N\subset U_{C,n,\varphi}$, we conclude that $\mathcal{N}$ is a Pytkeev network at $\infty$.

{\em Claim 2.} We claim that $cn_\chi(\Omega,\infty)=\kappa$. The inequality $cn_\chi(\Omega,\infty)\leq\kappa$ follows from the fact that the family $\{\{\infty\}\cup \{x\}:x\in \Omega\}$ is a $cn$-network of cardinality $\kappa$ at $\infty$. Assuming that $cn_\chi(\Omega,\infty)<\kappa$, fix a $cn$-network $\mathcal{C}$ at $\infty$ of cardinality $|\mathcal{C}|=cn_\chi(\Omega,\infty)<\kappa$. We can suppose that $|C|>1$ and $\infty\in C$ for every $C\in \mathcal{C}$. In each  set $C\in\mathcal{C}$ fix a point $(\alpha_C,y_C,z_C)\in C\cap (\kappa\times\w\times\w)$ and consider the set $F=\{\alpha_C:C\in\mathcal{C}\}$. Then  the set
\[
U_\infty=\{\infty\}\cup\big((\kappa\setminus F)\times\w\times\w\big)
 \]
is a neighborhood of $\infty$ which does not contain a set $C\in\mathcal{C}$. So
\[
\bigcup\{C\in\mathcal{C}:\infty\in C\subset U_\infty\}=\emptyset
\]
is not a neighborhood of $\infty$ and hence $\mathcal{C}$ fails to be a $cn$-network at $\infty$.

{\em Claim 3.} We claim that the tightness $t(\Omega,\infty)$ of the space $\Omega$ at $\infty$ is $\kappa$; hence  the tightness $t(\Omega)$ of $\Omega$ is $\kappa$.

Clearly, $t(\Omega,\infty)\leq t(\Omega)\leq |\Omega|\leq \kappa$. Let us show that $t(\Omega,\infty)\geq\kappa$. Consider the set $\A := U_{C,n,\varphi}\setminus \{\infty\}$ for some $C\subset \kappa$ with $|C|<\kappa$, $n\in\w$ and  a function $\varphi:\kappa\times \w\to\w$. It is clear that $\infty\in \overline{\A}$. Fix arbitrarily a subset $A$ of $\A$ with $|A|<\kappa$. Denote by $B$ the projection of $A$ to $\kappa$; so $|B|<\kappa$. Then, by construction, the open neighborhood $U_{B,n,\varphi}$ of $\infty$ does not intersect with $A$. Thus $t(\Omega,\infty)\geq\kappa$.

Finally,  $\Omega$ fails to be a strict $\sigma$-space by  Corollary \ref{c-Aleph} and Proposition \ref{pTigh}.
\end{proof}

\begin{remark}{\em
Corollary \ref{c-Aleph} shows that the  ({\em global}) concept of the  $\nn\mbox{-}\sigma$-spaces is well-consistent with the ({\em local}) concept of the   countable $\nn$-character. On the other hand, as Example \ref{exa-A-non-CS} shows,  the \emph{(global)} and \emph{(local)} network properties dealing with  the $\sigma$-locally finite Pytkeev networks  and $\aleph$-spaces, respectively,  actually are not consistent. Since $\aleph$-spaces coincide with $cs^\ast\mbox{-}\sigma$-spaces by Theorem \ref{tA-cs}, $\aleph$-spaces can be naturally consistent {\em only} with the \emph{local property} to have countable $cs^\ast$-character, but not with countable $k$- or $ck$-character. This may suggest that the class of strict $\aleph$-spaces is  actually a more appropriate generalization of $\aleph_0$-spaces than the  class of $\aleph$-spaces. }
\end{remark}

Relations between various types of $\aleph$-spaces are given below.
\begin{proposition} \label{c--PytAleph}
Each  $\Pp$-space is a strict $\aleph$-space, and each strict  $\aleph$-space is an $\aleph$-space.
\end{proposition}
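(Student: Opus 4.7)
The plan is to verify both implications by reducing each one to preliminary results already established in the excerpt, so the proof will be short.

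For the second (and easier) implication, that every strict $\aleph$-space is an $\aleph$-space, I would take a $\sigma$-locally finite $ck$-network $\Nn$ witnessing the strict $\aleph$-space structure and apply Remark \ref{r1}(i) to conclude that $\Nn$ is a local $k$-network. Invoking regularity (built into the definition of an $\nn\mbox{-}\sigma$-space via Definition \ref{def-cn-space}) together with Remark \ref{r1}(ii) would then upgrade $\Nn$ to a genuine $k$-network. Since $\Nn$ is $\sigma$-locally finite, $X$ is an $\aleph$-space.

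For the first implication, that every $\Pp$-space is a strict $\aleph$-space, the strategy is to apply Proposition \ref{fB} at each individual point. Given a $\sigma$-locally finite $cp$-network $\Nn=\bigcup_{n\in\NN}\Nn_n$ and a point $x\in X$, local finiteness of each $\Nn_n$ ensures that only finitely many of its elements contain $x$; hence the restricted subfamily $\Nn_x:=\{N\in\Nn : x\in N\}$ is countable. A quick verification (looking at the definition of a $cp$-network at $x$, each set supplied by the $cp$-network property for $\Nn$ automatically contains $x$, hence lies in $\Nn_x$) shows that $\Nn_x$ inherits from $\Nn$ the property of being a $cp$-network at $x$. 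Proposition \ref{fB} would then guarantee that $\Nn_x$ is a $ck$-network at $x$, and since enlarging a $ck$-network at $x$ to any superfamily preserves the property, the original $\Nn\supseteq\Nn_x$ is a $ck$-network at $x$ as well. Running this argument at every point of $X$ would conclude that $\Nn$ itself is a $\sigma$-locally finite $ck$-network in $X$, as required.

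The main obstacle --- if there is one --- is conceptual rather than technical: one has to notice that the \emph{global} $\sigma$-local-finiteness hypothesis supplies exactly the \emph{pointwise} countability needed to trigger Proposition \ref{fB}. Once that observation is made, the remainder is essentially bookkeeping, and no new topological construction is needed.
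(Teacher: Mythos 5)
Your proposal is correct and follows essentially the same route as the paper: for the first implication the paper likewise restricts to the countable subfamily $\{N\in\Nn : x\in N\}$ (citing the proof of Theorem \ref{t-Lind-Set} for its being a countable $cp$-network at $x$, which you verify directly) and then invokes Proposition \ref{fB}; for the second it uses regularity and Remark \ref{r1} exactly as you do.
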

\begin{proof}
Let $X$ be a $\Pp$-space with a $\sigma$-locally finite $cp$-network $\DD$. For every $x\in X$, the proof of Theorem \ref{t-Lind-Set} shows that the family $\Nn(x):=\{ N\in\DD : x\in N\}$ is a countable $cp$-network at $x$. Proposition \ref{fB} implies that $\Nn(x)$ is also a $ck$-network at $x$. Thus $\DD$ is a $\sigma$-locally finite $ck$-network for $X$, i.e. $X$ is a strict $\aleph$-space.

Let $X$ be a  strict $\aleph$-space with a $\sigma$-locally finite $ck$-network $\DD$. As $X$ is regular, Remark \ref{r1} shows that $\DD$ is also a $k$-network for $X$.  Thus $X$ is an $\aleph$-space.
\end{proof}

In \cite{GK2} it is shown that a  topological space $X$ is cosmic (resp. an $\aleph_0$-space) if and only if $X$ is a  Lindel\"{o}f $\sigma$-space (resp. a  Lindel\"{o}f $\aleph$-space). Analogously we prove the following
\begin{proposition} \label{pLindel}
Let $X$ be a topological space. Then
\begin{enumerate}
\item[{\rm (i)}] $X$ is cosmic  if and only if $X$ is a  Lindel\"{o}f strict $\sigma$-space;
\item[{\rm (ii)}]  $X$ is  an $\aleph_0$-space if and only if $X$ is a  Lindel\"{o}f strict $\aleph$-space;
\item[{\rm (iii)}]  $X$ is  a $\Pp_0$-space if and only if $X$ is a  Lindel\"{o}f  $\Pp$-space.
\end{enumerate}
\end{proposition}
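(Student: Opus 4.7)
The plan is to derive all three equivalences by a single application of Theorem~\ref{t-Lind-Set}, taken with the Lindelöf set $S$ equal to the whole space $X$.

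The forward implications (cosmic $\Rightarrow$ Lindelöf strict $\sigma$-space, $\aleph_0$-space $\Rightarrow$ Lindelöf strict $\aleph$-space, $\Pp_0$-space $\Rightarrow$ Lindelöf $\Pp$-space) I would dispatch first. The observation that $\Nn\vee\Nn$ turns any network (resp.\ $k$-network, Pytkeev network) into a $cn$-network (resp.\ $ck$-network, $cp$-network), already recorded in the text, shows that the hypothesis supplies a \emph{countable} $\nn$-network, and any countable family is trivially $\sigma$-locally finite. Lindelöfness is the standard fact that any space with a countable network is Lindelöf: for each open cover and each network element contained in some member of the cover, pick one such member.

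For the converse, fix $\nn\in\{cn,ck,cp\}$ and assume that $X$ is Lindelöf with a $\sigma$-locally finite $\nn$-network $\DD=\bigcup_{n\in\NN}\DD_n$. To meet the hypotheses of Theorem~\ref{t-Lind-Set} I must first make $\DD$ closed and increasing. Increasing is free by replacing $\DD_n$ with $\bigcup_{k\leq n}\DD_k$. For closedness I would replace each $N\in\DD$ by its closure $\overline{N}$; using the regularity built into Definition~\ref{def-cn-space} of an $\nn\mbox{-}\sigma$-space, one checks case by case that $\overline{\DD}:=\{\overline{N}:N\in\DD\}$ is again an $\nn$-network (given an open $U\ni x$, shrink to an open $V$ with $x\in V\subset\overline{V}\subset U$, find the required $N\in\DD$ with $x\in N\subset V$, and note $x\in\overline{N}\subset U$; the cases $\nn=ck$ and $\nn=cp$ are analogous). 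Local finiteness is obviously preserved under taking closures. This closure step is the only non-routine part of the argument and I expect it to be the main obstacle, though each case unfolds quickly.

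Now apply Theorem~\ref{t-Lind-Set} with $S=X$. It produces a countable sequence $\Nn=\{D_k\}_{k\in\NN}\subset\DD$ that is an $\nn$-network at every point of $X$, i.e.\ a countable $\nn$-network for $X$. To conclude: in case (i) a countable $cn$-network is in particular a countable network, so $X$ is cosmic; in case (ii) a countable $ck$-network is a $k$-network by Remark~\ref{r1}(i), so $X$ is an $\aleph_0$-space; in case (iii) a countable $cp$-network is a Pytkeev network, so $X$ is a $\Pp_0$-space. This completes the three equivalences.
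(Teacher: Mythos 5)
Your argument is correct, but it takes a noticeably heavier route than the paper's. The paper proves the nontrivial direction in one stroke: in a Lindel\"of space every locally finite family of nonempty sets is countable (cover $X$ by open sets each meeting only finitely many members of $\DD_n$, extract a countable subcover, and observe that every $D\in\DD_n$ meets one of them), so each $\DD_n$ and hence the whole $\sigma$-locally finite $\nn$-network $\DD=\bigcup_n\DD_n$ is \emph{already countable as given}. This avoids Theorem~\ref{t-Lind-Set} entirely, and in particular avoids the closure step, which is the only delicate point of your proposal; it also yields the slightly stronger conclusion that the original network itself is countable rather than merely that some countable subfamily of a modified (closed, increasing) network is an $\nn$-network. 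Your approach does go through: the closure modification preserves local finiteness (an open set meeting $\overline{N}$ meets $N$) and, by regularity, preserves the $cn$-, $ck$- and $cp$-properties exactly as you indicate, after which Theorem~\ref{t-Lind-Set} with $S=X$ hands you a countable $\nn$-network; and your final identifications are those already recorded in the paper (a regular space with a countable $cn$-, $ck$-, resp.\ $cp$-network is cosmic, an $\aleph_0$-space, resp.\ a $\Pp_0$-space --- note that for (ii) you need Remark~\ref{r1}(ii), not only (i), to pass from local $k$-network to $k$-network via regularity). So nothing is wrong, but the direct counting argument is both shorter and sharper, and is what the paper actually does.
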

\begin{proof}
We prove only (iii). Assume that $X$ is a Lindel\"of  $\Pp$-space  with a $\sigma$-locally finite $cp$-network $\DD=\bigcup_{n\in\NN} \DD_n$. It is enough to prove that every $\DD_n$ is countable. For every $x\in X$ choose an open neighborhood $U_x$ of $x$ such that $U_x$ intersects with a finite subfamily $T(x)$ of $\DD_n$. Since $X$ is a Lindel\"{o}f space, we can find a countable set $\{ x_k\}_{k\in\NN}$ in $X$ such that $X =\bigcup_{k\in\NN} U_{x_k}$. Hence any $D\in\DD_n$ intersects with some $U_{x_k}$ and therefore $D\in T(x_k)$. Thus $\DD_n = \bigcup_{k\in\NN} T(x_k)$ is countable.
Conversely, if $X$ is a $\Pp_0$-space, then $X$ is Lindel\"{o}f (see \cite{Mich}) and it is trivially a $\Pp$-space.
\end{proof}

\begin{remark}{\em
If $X$ is a cosmic non-$\aleph_0$-space, then $X$ is not a strict $\aleph$-space by Proposition \ref{pLindel}.}
\end{remark}


\section{Metrizability conditions for topological spaces} \label{secMetr}


In this section we present some criterions for metrizability of topological spaces.

Following \cite[II.2]{Arhangel} we say  that a topological space $X$ has {\it countable fan tightness at a point } $x\in X$ if for each sets $A_n\subset X$, $n\in\NN$, with $x\in \bigcap_{n\in\NN} \overline{A_n}$ there are finite sets $F_n\subset A_n$, $n\in\NN$, such that $x\in \overline{\cup_{n\in\NN} F_n}$.
A space $X$ has {\it countable fan tightness} if it has  countable fan tightness at each point $x\in X$.

Recall that a topological space $X$ has the {\it property $\left( \alpha_{4}\right) $ at a point $x\in X$} if for any $\{x_{m,n}:\left( m,n\right) \in \mathbb{N}\times \mathbb{N}\}\subset X$ with $\lim_{n}x_{m,n}=x\in X$, $m\in \mathbb{N}$, there exists a sequence $\left( m_{k}\right) _{k}$ of distinct natural numbers and a sequence $\left( n_{k}\right) _{k}$ of natural numbers such that $\lim_{k}x_{m_{k},n_{k}}=x$; $X$ has the {\it property $\left( \alpha_{4}\right) $} or is an {\it $\left( \alpha_{4}\right) $-space} if it has the property $\left( \alpha_{4}\right)$ at each point $x\in X$. Nyikos proved in \cite[Theorem 4]{nyikos} that any Fr\'{e}chet-Urysohn topological group satisfies $\left( \alpha_{4}\right)$. However there are Fr\'{e}chet-Urysohn topological spaces which do not have $\left( \alpha_{4}\right)$.

Next proposition  recalls some criterions for a topological space to be first countable at a point. Note that (i)$\Leftrightarrow$(ii) is proved in \cite{Banakh1} and (i)$\Leftrightarrow$(iii) follows from Proposition 6 and Lemma 7 of \cite{BZ}.
\begin{proposition} \label{FirstCount}
Let $x$ be a point of a topological space $X$. Then the following assertions are equivalent:
\begin{itemize}
\item[{\rm (i)}] $X$  is first countable at $x$.
\item[{\rm (ii)}] $X$ has a countable Pytkeev network at $x$ and countable fan tightness at $x$.
\item[{\rm (iii)}] $X$ has a countable $cs^\ast$-network at $x$ and is a Fr\'{e}chet-Urysohn $\left( \alpha_{4}\right)$-space.
\end{itemize}
\end{proposition}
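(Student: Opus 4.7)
The plan is to establish the cycle (i)$\Rightarrow$(iii)$\Rightarrow$(ii)$\Rightarrow$(i), with the bulk of the work reserved for the last implication. For the two easy "from first countability" directions, fix a decreasing countable base $\{U_n\}_{n\in\NN}$ at $x$. It is simultaneously a $cs^\ast$-network and a Pytkeev network at $x$; the Fr\'{e}chet-Urysohn property at $x$ is obvious; and property $(\alpha_4)$ at $x$ follows by choosing, for each $x_{m,n}\to x$, an index $n_m$ with $x_{m,n_m}\in U_m$, whence $(x_{m,n_m})_{m\in\NN}\to x$. Countable fan tightness is also clear: given sets $A_n$ with $x\in\bigcap_n\overline{A_n}$, pick $a_n\in A_n\cap U_n$ and take $F_n:=\{a_n\}$, so that $a_n\to x$ and $x\in\overline{\bigcup_n F_n}$.

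For (iii)$\Rightarrow$(ii), Proposition \ref{fB3} promotes the countable $cs^\ast$-network at $x$ to a countable Pytkeev network at $x$ using that $X$ is Fr\'{e}chet-Urysohn at $x$. It remains to derive countable fan tightness from the Fr\'{e}chet-Urysohn property together with $(\alpha_4)$: given $A_m$ with $x\in\bigcap_m\overline{A_m}$, pick sequences $(x_{m,n})_{n\in\NN}\subset A_m$ with $x_{m,n}\to x$, apply $(\alpha_4)$ to extract an injective $(m_k)$ and indices $(n_k)$ with $x_{m_k,n_k}\to x$, and set $F_m:=\{x_{m,n_k}:m_k=m\}$; each $F_m$ has size at most one by injectivity of $(m_k)$, and $\bigcup_m F_m$ contains the convergent sequence $(x_{m_k,n_k})_k$, so $x\in\overline{\bigcup_m F_m}$.

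The key direction is (ii)$\Rightarrow$(i). Enumerate the countable Pytkeev network at $x$ as $\{N_n\}_{n\in\NN}$; replacing $N_n$ by $N_n\cup\{x\}$ (still Pytkeev by Remark \ref{r1}(vi)) and then closing under finite intersections preserves countability, so I may assume $x\in N_n$ for all $n$ and the family is closed under finite intersections. I claim this family is a neighborhood base at $x$. Suppose not and fix an open $U\ni x$ with $N_n\not\subseteq U$ for every $n$, so that $A_n:=N_n\setminus U$ is nonempty. In the first case, $x\in\overline{A_n}$ for every $n$, and countable fan tightness yields finite $F_n\subseteq A_n$ with $x\in\overline{A}$, where $A:=\bigcup_n F_n\subseteq X\setminus U$; since $x\notin A$, the Pytkeev property produces some $N_m\subseteq U$ with $N_m\cap A$ infinite, contradicting $A\cap U=\emptyset$. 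In the remaining case some $n_0$ satisfies $x\notin\overline{A_{n_0}}$, giving an open $V\ni x$ with $V\cap N_{n_0}\subseteq U$; using that our Pytkeev network is a $cn$-network at $x$ (Remark \ref{r1}(iii)), the set $\bigcup\{N_k:x\in N_k\subseteq V\}$ is a neighborhood of $x$, so some $N_k$ satisfies $x\in N_k\subseteq V$, and then $N_k\cap N_{n_0}$ lies in the family by closure under intersection and is contained in $U$, contradicting the assumption.

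I expect the main obstacle to be the orchestration of the two cases in (ii)$\Rightarrow$(i): the Pytkeev hypothesis is what handles the "accumulating" case, while the $cn$-network consequence of the Pytkeev property (combined with closure under finite intersections) is what kills the "separated" case; both moves are needed together, and neither should secretly borrow the Fr\'{e}chet-Urysohn property or $(\alpha_4)$, which are available only on the (iii)-route.
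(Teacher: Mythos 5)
The paper itself does not prove this proposition --- it cites \cite{Banakh1} for (i)$\Leftrightarrow$(ii) and \cite{BZ} for (i)$\Leftrightarrow$(iii) --- so your attempt is the only argument on the table. Your implications (i)$\Rightarrow$(iii) and (iii)$\Rightarrow$(ii) are correct (modulo reading the Fr\'{e}chet-Urysohn and $(\alpha_4)$ hypotheses in (iii) locally at $x$, which is the only reading under which an equivalence with the local statement (i) can hold).

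The implication (ii)$\Rightarrow$(i), however, has a genuine gap. You negate ``$\{N_n\}$ is a neighborhood base at $x$'' as ``there is an open $U\ni x$ with $N_n\not\subseteq U$ for every $n$.'' But a Pytkeev network at $x$ is by definition a network at $x$, so for every open $U\ni x$ there already exists $n$ with $x\in N_n\subseteq U$; the situation you suppose cannot occur for trivial reasons, and the contradictions you derive in both cases merely re-establish the network property. The real issue, which your argument never touches, is that the sets $N_n$ contained in $U$ need not be \emph{neighborhoods} of $x$: a countable network at $x$ is very far from a countable base at $x$ (take $\{\{x\}\}$). The correct deployment of countable fan tightness is different: fix an open $U\ni x$, enumerate $I=\{n: x\in N_n\subseteq U\}=\{n_1,n_2,\dots\}$, set $P_k:=\bigcup_{j\leq k}N_{n_j}$, and suppose no $P_k$ is a neighborhood of $x$, i.e.\ $x\in\overline{X\setminus P_k}$ for every $k$. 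Countable fan tightness yields finite sets $F_k\subseteq X\setminus P_k$ with $x\in\overline{A}$ for $A:=\bigcup_k F_k$; since $x\in P_k$ for all $k$ we have $x\notin A$, so (in a Hausdorff space) $A$ accumulates at $x$, and the Pytkeev property produces $m$ with $x\in N_m\subseteq U$ and $N_m\cap A$ infinite. But then $m=n_{k_0}$ for some $k_0$, so $N_m\subseteq P_k$ and hence $N_m\cap F_k=\emptyset$ for all $k\geq k_0$, giving $N_m\cap A\subseteq\bigcup_{k<k_0}F_k$ finite --- a contradiction. Therefore some \emph{finite union} of members of the network is a neighborhood of $x$ contained in $U$, and the countable family of all finite unions that happen to be neighborhoods of $x$ is the required base. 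Note that this forces you to close the network under finite \emph{unions}; your closure under finite intersections plays no useful role.
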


Recall also (see \cite{Mich}) that a point $x$ in a topological space $X$ is called an {\it $r$-point} if there is a sequence $\{ U_n\}_{n\in\NN}$ of neighborhoods of $x$ such that if $x_n\in U_n$, then $\{ x_n\}_{n\in\NN}$ has compact closure; we call $X$ to be an {\it $r$-space} if all of its points are $r$-points.
\begin{remark} \label{remMet} {\em
The first countable spaces and the locally compact spaces are  trivially $r$-spaces. So the Bohr compactification $b\mathbb{Z}$ of $\mathbb{Z}$ is an $r$-space, but since $b\mathbb{Z}$  has uncountable tightness it does not have countable fan tightness. On the other hand, there are spaces with countable fan tightness  which are not $r$-spaces. Indeed, let $X=C_p[0,1]$. Then $X$ has countable fan tightness by \cite[II.2.12]{Arhangel}. As $X$ has a neighborhood base at zero determined by finite families of points in $[0,1]$, for each sequence $\{ U_n\}_{n\in\NN}$ of neighborhoods of zero  we can find $z\in [0,1]$ and  $f_n \in U_n$ such that $f_n (z) \to \infty$. Hence the closure of $\{ f_n\}_{n\in\NN}$ is non-compact. Thus $X$ is not an $r$-space. We do not know non-metrizable $r$-spaces which have countable fan tightness.}
\end{remark}

Following Morita \cite{Morita}, a topological space $X$ is called an {\it $M$-space} if there exists a sequence $\{ \mathcal{U}_n\}_{n\in\NN}$ of open covers of $X$ such that: (i) if $x_n\in \bigcup \{ U\in \mathcal{U}_n : x\in U\}$ for each $n$, then $\{ x_n \}_{n\in\NN}$ has a cluster point; (ii) for each $n$, $\mathcal{U}_{n+1}$ star refines $\mathcal{U}_n$. Any countably compact space $X$ is an $M$-space; just set $\UU_n =\{ X\}$ for every $n\in\NN$. Countably compact subsets of an $\nn-\sigma$-space are metrizable as the next theorem shows.
\begin{theorem} \label{pComMet}
For a topological space $X$ the following assertions are equivalent:
\begin{itemize}
\item[{\rm (i)}] $X$ is metrizable.
\item[{\rm (ii)}] {\rm (\cite{OMe})} $X$ is an $\aleph$-space and an $r$-space.
\item[{\rm (iii)}]  {\rm (\cite{Morita})} $X$ is a $\sigma$-space and an $M$-space.
\end{itemize}
\end{theorem}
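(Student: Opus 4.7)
The plan is to split the equivalence into the two cycles (i)$\Leftrightarrow$(ii) and (i)$\Leftrightarrow$(iii), treating the forward directions uniformly and citing the classical theorems for the reverses.

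For (i)$\Rightarrow$(ii) and (i)$\Rightarrow$(iii), I would start from a compatible metric $d$ on $X$ and apply the Nagata--Smirnov theorem to obtain a $\sigma$-discrete open base $\DD$. The same family $\DD$ is simultaneously a base, a network, a $k$-network, and even a $cp$-network (see Proposition~\ref{pMetr-Pyt}), so $X$ is both an $\aleph$-space and a $\sigma$-space. First countability is immediate, and it implies the $r$-space property by taking $U_n:=B(x,1/n)$: any sequence $x_n\in U_n$ converges to $x$ and therefore has compact closure. The $M$-space property is witnessed by the sequence $\mathcal{U}_n$ of open covers formed by balls of radius $1/2^{n+k}$ for a suitable $k$, which star-refine the previous cover; any choice $x_n\in\bigcup\{U\in\mathcal{U}_n:x\in U\}$ converges to $x$ and thus clusters.

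For the nontrivial implication (ii)$\Rightarrow$(i), I would invoke O'Meara's theorem \cite{OMe}; a natural route using the machinery of the present paper is to fix a $\sigma$-locally finite $k$-network $\Nn=\bigcup_n\Nn_n$ and an $r$-space sequence $\{U_n\}$ at a point $x\in X$. The subfamily $\Nn(x):=\{N\in\Nn:x\in N\}$ is at most countable (by local finiteness), and using the $k$-network property applied to compact subsets lying in the $U_n$'s together with the fact that sequences drawn from these $U_n$'s have compact closures, one extracts a countable neighborhood base at $x$. Thus $X$ is first countable; a first countable $\aleph$-space is regular with a $\sigma$-locally finite base, and hence metrizable by Nagata--Smirnov.

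For (iii)$\Rightarrow$(i), I would cite Morita's theorem \cite{Morita}: the star-refining sequence of open covers defines a pseudo-metric uniformity on $X$, and the $\sigma$-locally finite network is used to separate points and refine this pseudo-metric into a metric generating the topology. The main obstacle is really that both (ii)$\Rightarrow$(i) and (iii)$\Rightarrow$(i) are genuine classical metrization theorems and reproducing them in full would require the technical interplay between a $\sigma$-locally finite (k-)network and the compactness-type sequence of covers or neighborhoods; since these results are available in the literature, the cleanest proof simply records the (straightforward) forward directions above and appeals to \cite{OMe} and \cite{Morita} for the converses.
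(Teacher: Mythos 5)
Your proposal matches the paper's treatment: the theorem is recorded there without proof, as a compilation of O'Meara's and Morita's classical metrization theorems, and your appeal to \cite{OMe} and \cite{Morita} for the converses together with the routine verification of the forward directions (Nagata--Smirnov/Bing base as a $k$-network, metric balls witnessing the $r$-space and $M$-space properties) is exactly what is intended. Just be aware that your parenthetical sketch of (ii)$\Rightarrow$(i) is not a proof --- the step ``a first countable $\aleph$-space has a $\sigma$-locally finite base'' is itself the hard content of O'Meara's theorem --- but since you ultimately cite the literature this does not affect correctness.
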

Consequently, each compact subset of a $\sigma$-space (in particular, a $cn\mbox{-}\sigma$-space) $X$ is metrizable. For locally compact spaces we have the following
\begin{proposition}
A locally compact space $X$ is metrizable if and  only if $X$ is a paracompact $\sigma$-space.
\end{proposition}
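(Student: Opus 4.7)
The plan is to prove both directions, with the forward direction being essentially a direct invocation of standard metrization results and the reverse direction relying on the ``compact subsets of a $\sigma$-space are metrizable'' consequence of Theorem \ref{pComMet} combined with Smirnov's metrization theorem.

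For the forward direction, suppose $X$ is metrizable. Then $X$ is paracompact by Stone's theorem. By the Nagata--Smirnov metrization theorem $X$ admits a $\sigma$-locally finite open base, which is in particular a $\sigma$-locally finite network; hence $X$ is a $\sigma$-space. (This is essentially the same argument that already appears in the proof of Proposition \ref{pMetr-Pyt}.) Note that local compactness was not used here, and in fact is not needed for this implication.

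For the reverse direction, assume $X$ is a locally compact paracompact $\sigma$-space. The key observation is that, by the sentence following Theorem \ref{pComMet}, every compact subset of a $\sigma$-space is metrizable. Since $X$ is locally compact, each point $x\in X$ admits a compact neighborhood $K_x$, and $K_x$ is then metrizable by the previous sentence. Consequently $X$ is locally metrizable.

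Finally I would invoke Smirnov's metrization theorem, which asserts that every paracompact locally metrizable Hausdorff space is metrizable, to conclude that $X$ is metrizable. The only potential obstacle is verifying that the ``compact subsets of a $\sigma$-space are metrizable'' statement is actually available to us: this is an immediate consequence of Theorem \ref{pComMet}(iii), since any compact space is trivially an $M$-space (take $\mathcal{U}_n=\{X\}$ for every $n\in\NN$, as mentioned in the paragraph preceding Theorem \ref{pComMet}), so a compact subset of a $\sigma$-space is both an $M$-space and a $\sigma$-space and hence is metrizable. No further subtleties arise.
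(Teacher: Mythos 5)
Your proof is correct, but the reverse direction takes a genuinely different route from the paper. The paper uses the structure theorem for locally compact paracompact spaces (Engelking, Theorem 5.1.27) to write $X$ as a topological sum $\oplus_{i\in I}X_i$ of clopen Lindel\"of subspaces; each $X_i$ is then a Lindel\"of $\sigma$-space, hence cosmic by the characterization quoted from \cite{GK2}, and a locally compact cosmic space is separable metrizable by \cite[3.3.5]{Eng}. You instead observe that compact subsets of a $\sigma$-space are metrizable (which is indeed legitimately available from Theorem \ref{pComMet}(iii), since compact spaces are $M$-spaces and subspaces of $\sigma$-spaces are $\sigma$-spaces by Proposition \ref{pSub}), deduce local metrizability from local compactness, and finish with Smirnov's metrization theorem (paracompact plus locally metrizable implies metrizable). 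Both arguments are sound. Your route is more direct and avoids the appeal to the cosmic-space characterization from the authors' preprint \cite{GK2}, at the cost of importing Smirnov's theorem, which the paper does not otherwise cite; the paper's route yields the slightly stronger structural conclusion that $X$ decomposes into clopen separable metrizable pieces. One small point worth making explicit in your write-up: a compact neighborhood $K_x$ being metrizable gives a metrizable \emph{open} neighborhood by passing to its interior, which is what Smirnov's theorem actually requires.
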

\begin{proof}
If $X$ is metrizable, then $X$ is paracompact by Stone's theorem \cite[5.1.3]{Eng} and is a $\sigma$-space by Proposition \ref{pMetr-Pyt}. Assume that $X$ is a paracompact $\sigma$-space. Then Theorem 5.1.27 of \cite{Eng} implies that $X=\oplus_{i\in I} X_i$, where $X_i$ is a clopen Lindel\"{o}f subset of $X$ for every $i\in I$. By \cite{GK2}, 
any $X_i$ is a locally compact cosmic space. Hence $X_i$ is a separable metrizable space by \cite[3.3.5]{Eng}. Thus $X$ is also metrizable.
\end{proof}

\textcolor{blue}{Let us note that in the following theorem} the implication  (i)$\Leftrightarrow$(ii) below  generalizes \cite[Theorem 1.9]{Banakh}, the implication (i)$\Leftrightarrow$(iii) is proved in \cite{GK2} and (i)$\Leftrightarrow$(v) follows from \cite{Morita}.
\begin{theorem} \label{tMetr-Pyt}
For a  topological space $X$ the following assertions are equivalent:
\begin{itemize}
\item[{\rm (i)}] $X$ is metrizable.
\item[{\rm (ii)}] $X$ is a  $\Pp$-space and has countable fan tightness.
\item[{\rm (iii)}] $X$ is an $\aleph$-space and is a Fr\'{e}chet-Urysohn  $\left( \alpha_{4}\right)$-space.
\item[{\rm (iv)}] $X$ is a strict $\aleph$-space and is a Fr\'{e}chet-Urysohn  $\left( \alpha_{4}\right)$-space.
\item[{\rm (v)}] $X$ is a strict $\sigma$-space and is an $M$-space.
\end{itemize}
\end{theorem}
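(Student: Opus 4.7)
The plan is to show (i) $\Rightarrow$ each of (ii)--(v) directly from the definitions and the diagram of generalized metric spaces following Definition \ref{def5}, and conversely to extract first countability from each of (ii)--(v), at which point Theorem \ref{pComMet} delivers metrizability.

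For the forward direction, assume $X$ is metrizable. Proposition \ref{pMetr-Pyt} makes $X$ a $\Pp$-space, and Proposition \ref{c--PytAleph} then promotes this to strict $\aleph$-space, hence to $\aleph$-space and to strict $\sigma$-space along the diagram. Any metric space is first countable, which at once yields countable fan tightness (take $F_n=\{x_n\}$ with $x_n\in A_n$ lying in the $n$th basic neighborhood of $x$) together with the Fr\'{e}chet--Urysohn and $(\alpha_{4})$ properties, and the $M$-space property is the easy half of Theorem \ref{pComMet}(iii).

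For (ii) $\Rightarrow$ (i), the main observation is that every countable $cp$-network at $x$ is automatically a countable Pytkeev network at $x$, the $cp$-definition being a strengthening of the Pytkeev definition by the extra clause $x\in N$. Since $X$ is a $\Pp$-space, Corollary \ref{c-Aleph} supplies a countable $cp$-network at every point; combined with the countable fan tightness hypothesis, this is exactly Proposition \ref{FirstCount}(ii), yielding first countability at each $x$. A first countable space is an $r$-space and a $\Pp$-space is an $\aleph$-space by Proposition \ref{c--PytAleph}, so Theorem \ref{pComMet}(ii) (O'Meara) gives metrizability.

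For (iii) $\Rightarrow$ (i), Corollary \ref{cAleph-cs} gives countable $cs^{\ast}$-character for the $\aleph$-space $X$; together with the global Fr\'{e}chet--Urysohn $(\alpha_{4})$ assumption this matches Proposition \ref{FirstCount}(iii), and first countability, hence metrizability via Theorem \ref{pComMet}(ii), follows as in the previous paragraph. The implication (iv) $\Rightarrow$ (iii) is immediate from Proposition \ref{c--PytAleph}. Implication (v) $\Rightarrow$ (i) reduces to Morita's theorem, i.e.\ Theorem \ref{pComMet}(iii), since every strict $\sigma$-space is a $\sigma$-space (a $cn$-network being in particular a network). I do not anticipate a genuine obstacle: the whole argument is a bookkeeping of network-character implications already developed in Sections \ref{secRel} and \ref{secGMS}, unified through Proposition \ref{FirstCount} and Theorem \ref{pComMet}. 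The one conceptual point worth stating explicitly is the identification of countable $cp$-networks at $x$ with countable Pytkeev networks at $x$, which is the hinge of the (ii) $\Rightarrow$ (i) step.
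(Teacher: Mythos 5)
Your proposal is correct and follows essentially the same route as the paper: both arguments reduce each of (ii)--(iv) to first countability via Proposition \ref{FirstCount} (using Corollary \ref{c-Aleph} resp.\ Corollary \ref{cAleph-cs} to supply the countable $cp$- resp.\ $cs^\ast$-network at each point) and then invoke Theorem \ref{pComMet}(ii), while (v) is handled by Morita's criterion, Theorem \ref{pComMet}(iii). The only cosmetic difference is that the paper organizes the equivalences as a cycle (i)$\Rightarrow$(iv)$\Rightarrow$(iii)$\Rightarrow$(ii)$\Rightarrow$(i) rather than proving each implication to and from (i) separately.
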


\begin{proof}
(i)$\Rightarrow$(iv) Let $X$ be metrizable. Then clearly, $X$ is a Fr\'{e}chet-Urysohn $\left( \alpha_{4}\right) $-space, and Proposition \ref{pMetr-Pyt} implies that $X$ is a strict $\aleph$-space. (iv)$\Rightarrow$(iii) is trivial.

(iii)$\Rightarrow$(ii) By Corollary \ref{cAleph-cs}, $X$ has countable $cs^\ast$-character. Now the hypothesis and Proposition \ref{FirstCount} imply that $X$ is a $\Pp$-space and has countable fan tightness.

(ii)$\Rightarrow$(i) By Corollary \ref{c-Aleph}, we have $cp_\chi(X)\leq\aleph_0$. So $X$ is first countable by Proposition \ref{FirstCount}. Now  Proposition \ref{c--PytAleph} and Theorem \ref{pComMet}(ii) imply metrizability of $X$.

Since any metrizable space is a strict $\sigma$-space and each strict $\sigma$-space is a $\sigma$-space, the equivalence (i)$\Leftrightarrow$(v) immediately follows from Theorem \ref{pComMet}(iii) and Proposition \ref{pMetr-Pyt}.
\end{proof}

Since any cosmic space is separable, the next corollary contains \cite[Theorem 1.9]{Banakh}:
\begin{corollary} \label{cSepMet}
A topological space $X$ is second countable if and only if  $X$ is a $\Pp_0$-space and has countable fan tightness if and only if $X$ is an $\aleph_0$-space and is a Fr\'{e}chet-Urysohn $\left( \alpha_{4}\right) $-space.
\end{corollary}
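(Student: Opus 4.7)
The plan is to derive Corollary \ref{cSepMet} as a fairly direct consequence of Theorem \ref{tMetr-Pyt}, together with the observation already present in the paper that cosmic spaces are separable and that a countable network is, trivially, $\sigma$-locally finite. So I would prove both ``if and only if'' statements by establishing metrizability via Theorem \ref{tMetr-Pyt} and then upgrading metrizability to second countability via separability.

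For the direction $(\Pp_0 + \text{countable fan tightness}) \Rightarrow \text{second countable}$, I would first observe that any countable $cp$-network is trivially a $\sigma$-locally finite $cp$-network, so a $\Pp_0$-space is in particular a $\Pp$-space. Combined with countable fan tightness, Theorem \ref{tMetr-Pyt}(ii) yields metrizability of $X$. On the other hand, by the diagram in the introduction, $X$ is also cosmic and hence separable. A separable metrizable space is second countable, as required. The converse is easy: a countable base is at once a countable $cp$-network (so $X$ is a $\Pp_0$-space) and, being metrizable, $X$ has countable fan tightness (given sets $A_n$ with $x\in\bigcap_n\overline{A_n}$, picking $a_n\in A_n$ within distance $1/n$ of $x$ gives a finite (singleton) $F_n=\{a_n\}$ with $x\in\overline{\bigcup_n F_n}$).

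For the direction $(\aleph_0 + \text{Fr\'{e}chet-Urysohn } (\alpha_4)) \Rightarrow \text{second countable}$, I would argue completely analogously: every $\aleph_0$-space is in particular an $\aleph$-space, so Theorem \ref{tMetr-Pyt}(iii) gives metrizability, and cosmicity (again from the diagram) gives separability. Conversely, a second-countable space is plainly an $\aleph_0$-space (a countable base is a countable $k$-network), and any metrizable space is Fr\'{e}chet-Urysohn and satisfies $(\alpha_4)$ (the latter by a standard diagonal argument: for sequences $x_{m,n}\to x$ with $d(x_{m,n},x)<\varepsilon$ eventually in $n$, extract a diagonal along which $d(x_{m_k,n_k},x)\to 0$).

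Since all three conditions have been shown equivalent to metrizability plus separability, they are pairwise equivalent, which establishes the corollary. There is no real obstacle here: the entire content lies in Theorem \ref{tMetr-Pyt} and in the two standing observations that countable implies $\sigma$-locally finite and cosmic implies separable; the only thing to be verified by hand is that metrizable spaces have countable fan tightness and property $(\alpha_4)$, both of which are routine.
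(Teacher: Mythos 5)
Your proof is correct and is essentially the paper's own argument: the paper derives the corollary immediately from Theorem \ref{tMetr-Pyt} together with the single observation that cosmic spaces are separable (so metrizable plus cosmic gives second countable), and the converse directions follow from Proposition \ref{pMetr-Pyt} and routine facts about metrizable spaces, exactly as you describe. The only point worth noting is that the converse implicitly uses regularity (so that second countable means separable metrizable, via Urysohn), a convention the paper itself adopts throughout.
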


\begin{example} \label{exaBohr}{\em
Let $\mathbb{Z}^+$ be the (discrete) group of integers $\mathbb{Z}$ endowed with the Bohr topology and $b\mathbb{Z}$ be the completion of $\mathbb{Z}^+$. It is easy to see that $\mathbb{Z}^+$ is  an $\aleph_0$-space. So
\[
1= cs^\ast_\chi(\mathbb{Z}^+) < ck_\chi(\mathbb{Z}^+) =cn_\chi(\mathbb{Z}^+)=\aleph_0 < \mathfrak{c}=\chi(\mathbb{Z}^+).
\]
Since every convergent sequence in $\mathbb{Z}^+$ is trivial, the family $\{ \{ 0\}\}$ is a $sc^\ast$-network at $0$ which is not a $cn$-network at zero. As each compact subset of $\mathbb{Z}^+$ is finite, the precompact group $\mathbb{Z}^+$ is not a $k$-space. The compact non-metrizable group $b\mathbb{Z}$ is a  dyadic compactum by Ivanovskij-Kuz'minov's theorem. Thus $b\mathbb{Z}$ has uncountable $cs^\ast$-character by \cite[Proposition 7]{BZ} and has uncountable $cn$-character by \cite{GK-GMS2}. } 
\end{example}

It is natural to ask whether  $\mathbb{Z}^+$ is a  $\Pp_0$-space.
Answering this  question Banakh \cite{Banakh} proved the following: A precompact group has the strong Pytkeev property if and only if it is metrizable. So Theorem \ref{tMetPrecomGr} generalizes the latter mentioned theorem to locally precompact groups although provides  a similar proof.  Recall that, a subset $E$ of a topological group $G$ is called {\it left-precompact} (respectively, {\it right-precompact, precompact}) if, for every open neighborhood $U$ of the unit $e\in G$, there exists a finite subset $F$ of $G$ such that $E\subseteq F\cdot U$ (respectively, $E\subseteq U\cdot F$, $E\subseteq F\cdot U$ and $E\subseteq U\cdot F$). If $E$ is symmetric the three different definitions coincide. A topological group $G$ is called {\it locally precompact} if it has a base at the unit consisting of symmetric precompact sets (or in other words, $G$ embeds into a locally compact group).

\begin{proof}[Proof of Theorem \ref{tMetPrecomGr}]
The implication (i)$\Rightarrow$(ii) follows from Proposition \ref{pMetr-Pyt}, and (ii)$\Rightarrow$(iii) follows from Corollary \ref{c-Aleph}. Let us prove (iii)$\Rightarrow$(i). Assume that $G$ has the strong Pytkeev property with a closed countable $cp$-network $\mathcal N$ at the unit $e$ of $G$. Suppose for a contradiction that $G$ is not first countable. Then there exists a symmetric precompact neighborhood $U$ of $e$ such that if $N\in \mathcal{N}$ and $N\subset U$ then $N$ is nowhere dense in $G$. Indeed, otherwise we can take a neighborhood  $V\subset G$ of  $e$ with $V^{-1} V\subset U$ and find $N\subset V$ such that $N^{-1} N \subset U$ contains a neighborhood of $e$, that means that the countable family $\{ N^{-1} N: N\in \mathcal{N}\}$ is a base at $e$. Let $\{N_k'\}_{k\in\NN}$ be an enumeration of the family
\[
\mathcal{N}'=\{N\in \mathcal{N}: N \mbox{ is nowhere dense in } G  \mbox{ and } N\subset U\},
\]
which is a closed $cp$-network at $e$. Fix arbitrarily $a_1 \not\in N'_1$.  Using induction and the nowhere density of the sets $N_k'$ one can construct a sequence $(a_k)_{k\in\NN} \subset U$ of distinct points of $G$ such that $a_n\notin\bigcup_{k<n}\bigcup_{m\le n}a_kN_m'$. Consider the set $A=\{a_k^{-1}a_n:k<n\}$. We claim that this set contains $e$ in its closure. Indeed, for every neighborhood $V$ of $e$, we can find a neighborhood $W$ of $e$ such that $W^{-1}W\subset V\cap U$, and using the precompactness of $U$ one can find a finite subset $F\subset G$ such that $U\subset FW$. By the Pigeonhole Principle, there are two numbers $k<n$ such that $a_k,a_n\in xW$ for some $x\in F$. Then $a_k^{-1}a_n\in W^{-1}W\subset V$, and hence $A\cap V\ne\emptyset$. Since $\mathcal N'$ is a $cp$-network at $e$, there is a number $q\in\NN$ such that the set $B:= N_q'\cap A$ is infinite. But this is not possible because $ B \subset\{a_i^{-1}a_j:i<j<q\}$ and hence $B$ is finite.
\end{proof}

Consequently, this shows that  the group  $\mathbb{Z}^+$ does not have the strong Pytkeev property.


\section{$\nn$-networks and operations over topological spaces} \label{secPro}


In this section we consider some standard operations in the class of spaces with countable $\nn$-character.
For $\sigma$-, $\aleph$-, $\aleph_0$- and   $\Pp_0$-spaces as well as for spaces with countable $cs^\ast$-character all the following results are well-known (see \cite{Banakh,BZ,gruenhage,Mich}).

Next two obvious  propositions show that the classes of topological spaces with countable character of various types are closed under taking subspaces and topological sums.
\begin{proposition} \label{pSub}
For $\nn\in\mathfrak{N}$, if $\Nn$ is an [$\sigma$-locally finite] $\mathfrak{n}$-network (at a point $x$) in a topological space $X$, then for every subspace $A\subset X$ (such that $x\in A$) the family $\Nn |_A := \{ N\cap A: n\in\Nn\}$ is an  [$\sigma$-locally finite]  $\mathfrak{n}$-network (at the point $x$) in the space $A$.
\end{proposition}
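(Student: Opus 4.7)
The plan is to verify, case-by-case for each $\nn\in\mathfrak{N}=\{ck,cs,cs^\ast,cp,cn,0\}$, that the defining property of an $\nn$-network at $x$ is inherited under restriction to the subspace $A$, and then to handle $\sigma$-local finiteness separately. The unifying observation is that every open neighbourhood of $x$ in $A$ has the form $O_x\cap A$ for some open $O_x\subset X$, and that convergent sequences, compact subsets and accumulating subsets contained in $A$ are automatically convergent sequences, compact subsets and accumulating subsets in $X$ (with the same limit/accumulation point $x$).

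For the $0$-, $cs$- and $cs^\ast$-cases I would extend the test neighbourhood (and, where relevant, the test sequence) from $A$ to $X$, apply the $\nn$-network property of $\Nn$ to obtain some $N\in\Nn$ with $x\in N\subset O_x$ satisfying the appropriate extra condition, and then pass to $N\cap A\in\Nn|_A$, which clearly witnesses the property in $A$. The $cn$-case is only slightly more subtle: if $U_x\subset X$ is a neighbourhood of $x$ contained in $\bigcup\{N\in\Nn:x\in N\subset O_x\}$, then $U_x\cap A$ is a neighbourhood of $x$ in $A$ contained in $\bigcup\{N\cap A:x\in N\cap A\subset O_x\cap A\}$. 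Similarly, in the $ck$-case a compact $K\subset U_x\cap A$ is compact in $X$, so the $ck$-property of $\Nn$ yields a finite $\FF\subset\Nn$ with $x\in\bigcap\FF$ and $K\subset\bigcup\FF\subset O_x$; its pointwise trace $\{N\cap A:N\in\FF\}$ is then the desired finite subfamily of $\Nn|_A$.

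The genuinely delicate point, which I regard as the main obstacle, is the $cp$-case when $x$ is isolated in $A$ but \emph{not} isolated in $X$: Definition \ref{def2} then forces us to produce $\{x\}$ as an element of $\Nn|_A$. To achieve this I would fix an open $U\subset X$ with $U\cap A=\{x\}$. Since $x$ is non-isolated in $X$ while $U\setminus\{x\}\subset X\setminus A$, the point $x$ lies in the closure of $B:=X\setminus A$; applying the $cp$-hypothesis on $\Nn$ to $B$ and $U$ produces some $N\in\Nn$ with $x\in N\subset U$, whence $N\cap A=\{x\}\in\Nn|_A$. In the complementary subcase, where $x$ is not isolated in $A$, the accumulation clause transfers directly: for $B\subset A$ with $x\in\overline{B}^A\setminus B$ we also have $x\in\overline{B}^X\setminus B$, so the $cp$-property of $\Nn$ gives $N\in\Nn$ with $x\in N\subset O_x$ and $N\cap B$ infinite, and since $B\subset A$ we get $(N\cap A)\cap B=N\cap B$ infinite, as required.

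Finally, for $\sigma$-local finiteness, writing $\Nn=\bigcup_{n\in\NN}\Nn_n$ with each $\Nn_n$ locally finite in $X$, every $y\in A$ has a neighbourhood $W\subset X$ meeting only finitely many $N\in\Nn_n$; its trace $W\cap A$ is a neighbourhood of $y$ in $A$ meeting only the corresponding finitely many $N\cap A\in\Nn_n|_A$. Hence $\Nn|_A=\bigcup_n\Nn_n|_A$ remains $\sigma$-locally finite. Apart from the isolated-point subtlety in the $cp$-case, the whole argument is routine bookkeeping within the definitions of Section \ref{secRel}.
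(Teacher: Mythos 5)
Your proof is correct. The paper offers no argument at all for this proposition (it is introduced as one of ``two obvious propositions''), so there is nothing to compare against; your case-by-case verification supplies exactly the routine bookkeeping the authors suppress. The one point that genuinely deserves the care you give it is the $cp$-case with $x$ isolated in $A$ but not in $X$, where Definition \ref{def2} demands $\{x\}\in\Nn|_A$: your device of applying the accumulation clause of the $cp$-property to the set $X\setminus A$ inside an open $U$ with $U\cap A=\{x\}$ handles this correctly and is the only step that is not a direct restriction of the definitions.
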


\begin{proposition} \label{pSum}
For $\nn\in\mathfrak{N}$, if $\Nn_i$ is an  [$\sigma$-locally finite] $\mathfrak{n}$-network in a topological space $X_i$, $i\in I$, then the family $\Nn = \bigcup_{i\in I} \Nn_i$ is an [$\sigma$-locally finite]  $\mathfrak{n}$-network in the topological sum $\oplus_{i\in I} X_i$.
\end{proposition}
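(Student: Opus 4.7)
The plan is to exploit the fact that every summand $X_i$ is clopen in $X:=\bigoplus_{i\in I} X_i$, so the neighborhood filter of any $x\in X_i$ in $X$ coincides with its neighborhood filter in $X_i$. In particular, one may restrict attention to open neighborhoods $O_x\subseteq X_i$; a convergent sequence in $X$ with limit $x$ is eventually contained in $X_i$; and any compact subset of $X$ that lies in an open set $U_x\subseteq X_i$ is already compact in $X_i$. These observations reduce each variant of the network property for $\Nn:=\bigcup_{i\in I}\Nn_i$ at $x$ to the corresponding property of $\Nn_i$ at $x$.

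Fix $x\in X_i$ and an open neighborhood $O_x\subseteq X_i$. For $\nn=0$ and $\nn=cn$ the set $N\in\Nn_i$ (respectively, the neighborhood $\bigcup\{N\in\Nn_i:x\in N\subseteq O_x\}$ of $x$ in $X_i$) witnessing the property in $X_i$ also witnesses it in $X$, since every such $N$ belongs to $\Nn$. For $\nn=cs$ or $cs^\ast$, any sequence in $X$ converging to $x$ is eventually in $X_i$, so the property of $\Nn_i$ produces the required member of $\Nn$. For $\nn=cp$, if $A\subseteq X$ accumulates at $x$ then by the openness of $X_i$ the trace $A\cap X_i$ also accumulates at $x$, and the $cp$-property of $\Nn_i$ applied to $A\cap X_i$ and $O_x$ yields $N\in\Nn_i\subseteq\Nn$ with $x\in N\subseteq O_x$ and $N\cap A$ infinite. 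For $\nn=ck$, choose $U_x\subseteq X_i$ as in the $ck$-definition for $\Nn_i$ at $x$; every compact $K\subseteq U_x$ in $X$ lies in $X_i$, so a finite subfamily of $\Nn_i\subseteq\Nn$ satisfies the required inclusions inside $O_x$.

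For the $\sigma$-locally finite clause, decompose each $\Nn_i=\bigcup_{n\in\NN}\Nn_i^{(n)}$ with $\Nn_i^{(n)}$ locally finite in $X_i$, and set $\Nn^{(n)}:=\bigcup_{i\in I}\Nn_i^{(n)}$, so $\Nn=\bigcup_n\Nn^{(n)}$. Any $y\in X$ lies in a unique summand $X_j$ and admits an open neighborhood $V\subseteq X_j$ meeting only finitely many members of $\Nn_j^{(n)}$; the remaining members of $\Nn^{(n)}$ are contained in the clopen set $X\setminus X_j$ and are therefore disjoint from $V$. Hence each $\Nn^{(n)}$ is locally finite in $X$. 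Nothing in this argument is delicate; the only step worth pausing over is the $cp$-case, which uses the openness of $X_i$ to reduce accumulation at $x$ in $X$ to accumulation at $x$ in $X_i$.
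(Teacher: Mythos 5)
Your proof is correct and is exactly the routine verification the paper has in mind (it states this proposition without proof, calling it obvious): every summand is clopen in the sum, so each variant of the network property at $x\in X_i$ reduces to the corresponding property of $\Nn_i$, and local finiteness passes to the union level by level because members of $\Nn_k^{(n)}$ with $k\neq i$ miss $X_i$ entirely. The only detail you skip is the isolated-point clause in the definition of a $cp$-network, which is immediate: $x$ is isolated in $X$ if and only if it is isolated in the open summand $X_i$, whence $\{x\}\in\Nn_i\subseteq\Nn$.
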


If $X=\prod_{n\in\NN} X_n$ and $n\in\NN$, we denote by  $p_n$ and $\pi_n$ the projections of $X$ onto $X_n$ and $X_1\times\cdots\times X_n$ respectively. For countable (Tychonoff) product we have the following.
\begin{proposition} \label{pProd}
For $\nn\in\mathfrak{N}$, if $\Nn_i =\{ N^i_n\}_{n\in\NN}$ is a countable $\mathfrak{n}$-network at a point $x_i$ of a topological space $X_i$, $i\in\NN$, then the countable family
\[
\Nn :=\left\{ N^1_{m_1}\times \cdots \times N^n_{m_n}\times \prod_{k>n} X_k : n\in\NN, x_i\in N^i_k \in \Nn_i \right\}
\]
is an $\mathfrak{n}$-network at the point $x=(x_i)$ of $X=\prod_{i\in\NN} X_i$.
\end{proposition}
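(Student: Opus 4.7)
The proof is a case-by-case analysis over $\mathfrak{n}\in\mathfrak{N}=\{0,cn,cs,cs^\ast,ck,cp\}$. Every neighborhood of $x$ in the product topology can be shrunk to a basic neighborhood $O_x=\pi_n^{-1}(U_1\times\cdots\times U_n)$ with each $U_i\subset X_i$ open and $x_i\in U_i$, so every verification reduces to this normal form. A crucial flexibility of $\Nn$ is that its cylinders are allowed to specify arbitrarily many initial coordinates; this will be needed only in the $cp$-case.

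For the five \emph{easier} types the argument is coordinatewise. For the plain network case ($\mathfrak{n}=0$), pick $N^i\in\Nn_i$ with $x_i\in N^i\subset U_i$ for $i\le n$ and form the cylinder. For $\mathfrak{n}=cn$, observe that $\bigcup\{N\in\Nn:x\in N\subset O_x\}=V_1\times\cdots\times V_n\times\prod_{k>n}X_k$ with $V_i:=\bigcup\{N\in\Nn_i:x_i\in N\subset U_i\}$ a neighborhood of $x_i$ by the $cn$-property of each $\Nn_i$. For $\mathfrak{n}\in\{cs,cs^\ast\}$ I would use coordinatewise convergence in the product and iterate the factor property along $i=1,\ldots,n$, thinning the sequence either to a tail (in the $cs$ case) or to an infinite subset of indices (in the $cs^\ast$ case) at each step. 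For $\mathfrak{n}=ck$ apply the $ck$-property of $\Nn_i$ at $x_i$ to $U_i$ to obtain an inner neighborhood $U'_i\subset U_i$; for any compact $K\subset\prod_{i\le n}U'_i\times\prod_{k>n}X_k$ the projections $p_i(K)\subset U'_i$ are compact, and factorwise $ck$ yields finite subfamilies $\FF_i\subset\Nn_i$ with $x_i\in\bigcap\FF_i$ and $p_i(K)\subset\bigcup\FF_i\subset U_i$, so the finite family of products $\prod_{i\le n}N^i\times\prod_{k>n}X_k$ with $N^i\in\FF_i$ provides the required covering inside $O_x$.

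The delicate case is $\mathfrak{n}=cp$, which I would attack in two stages. First, I prove the statement for \emph{finite} products by induction on the number of factors, reducing the step (by associating factors) to the binary case $X=Y\times Z$. Given $A\subset Y\times Z$ with $(y,z)\in\overline A\setminus A$ and $O=U\times V$, I distinguish whether $(y,z)\in\overline{A\cap(\{y\}\times V)}$: in the first branch $z$ is a proper accumulation point of $A_y:=\{b\in V:(y,b)\in A\}$ and the $cp$-property of $\Nn_Z$ combined with the network property of $\Nn_Y$ delivers the cylinder; in the second branch I shrink $V$ so that $A\cap O\subset(U\setminus\{y\})\times V$, apply the $cp$-property of $\Nn_Y$ to $p_Y(A\cap O)$, and then coordinate the choice of $N_Z$ by invoking countable tightness at $(y,z)$ (available by Proposition~\ref{pTigh}, since each $\Nn_i$ is in particular a countable $cn$-network at $x_i$) to replace $A$ by a countable accumulating subset before applying $cp$ in $\Nn_Z$. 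Second, the passage from finite to countable products is another dichotomy on $O_x=\pi_n^{-1}(U_1\times\cdots\times U_n)$: either there is some $m\ge n$ with $\pi_m(x)\notin\pi_m(A\cap O_x)$, in which case the finite $cp$-result applied in $X_1\times\cdots\times X_m$ produces a product set whose $\pi_m$-preimage is the required cylinder in $\Nn$; or, for every $m\ge n$ one can choose $a^{(m)}\in A\cap O_x$ with $\pi_m(a^{(m)})=\pi_m(x)$, in which case $a^{(m)}\to x$ in $X$, infinitely many $a^{(m)}$ are distinct (otherwise the repeated value would equal $x\notin A$), and the already-proved $cs^\ast$-network property of $\Nn$ catches an infinite subsequence inside some cylinder of $\Nn$. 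The principal obstacle is coordinating the $cp$-selections in the two factors in the binary case; Proposition~\ref{pTigh} is the tool that makes this possible by converting accumulation in the product to a countable combinatorial problem.
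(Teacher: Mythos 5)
Your treatment of the cases $\nn\in\{0,cn,cs,cs^\ast,ck\}$ is correct and essentially identical to the paper's coordinatewise argument, and your reduction of the countable product to finite products in the $cp$-case (the dichotomy over $m\ge n$ on whether $\pi_m(x)\in\pi_m(A\cap O_x)$, falling back on the already-established $cs^\ast$-property of $\Nn$ in the second branch) is a valid and slightly different alternative to the paper's dichotomy on whether $D=\pi_n^{-1}(\pi_n(x))\cap A$ is infinite. The problem is the binary $cp$-case, which is where all the real content lies, and there your sketch has a genuine gap.

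In your second branch you first choose $N_Y$ from the $cp$-property of $\Nn_Y$ applied to $p_Y(A\cap O)$, and only afterwards try to find $N_Z$. But the set of second coordinates of the points of $A$ that $N_Y$ catches, namely $p_Z\bigl(A\cap(N_Y\times V)\bigr)$, need not accumulate at $z$ at all: the points of $A$ whose first coordinates cluster at $y$ inside $N_Y$ may all have second coordinates clustering at some $z'\ne z$, while the points of $A$ responsible for accumulation at $(y,z)$ have first coordinates outside $N_Y$. A $cp$-network at $z$ gives you nothing for a set that does not accumulate at $z$, so no admissible $N_Z\ni z$ with $N_Z\subset V$ need catch infinitely many of those second coordinates. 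Invoking Proposition \ref{pTigh} to pass to a countable accumulating subset does not touch this difficulty: the coordination problem is exactly the same for countable $A$. The paper avoids the ``project one factor first'' strategy altogether. It enumerates the members of $\Nn_i$ containing $x_i$ and lying in $U_i$, forms the increasing finite unions $P_{k,i}$ and the products $P_k=P_{k,1}\times P_{k,2}$, and assumes for contradiction that every $P_k\cap A$ is finite. Then for each $a\in(V\cap A)\setminus A_0$ it selects a coordinate $n_a$ witnessing $a_{n_a}\notin P_{k_a,n_a}$, splits $A$ accordingly into $A(1)\cup A(2)$, and uses the $cp$-property in \emph{each} factor to show that the projection $B_n=p_n(A(n))$ cannot accumulate at $x_n$; intersecting the resulting neighborhoods $W_1\times W_2$ with $V$ separates $x$ from $A$ and contradicts $x\in\overline A$. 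Your first branch (vertical accumulation over $y$) is fine, but the second branch needs to be replaced by an argument of this joint type; as written it would fail.
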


\begin{proof}
Let $U= \prod_{i=1}^n U_i \times\prod_{i>n} X_i$ be a neighborhood of $x$, where $U_i$ is a neighborhood of $x_i$ for all $1\leq i\leq n$. 

(1) Assume that $\Nn_i$ are $cn$-networks.  By definition, for every $1\leq i\leq n$, the set $W_i := \bigcup \{ N^i_k \in\Nn_i : x_i\in N^i_k \subseteq U_i \}$ is  a neighborhood of $x_i$. Clearly,
\[
\prod_{i=1}^n W_i \times\prod_{i>n} X_i \subseteq \bigcup \left\{ \prod_{i=1}^n N^i_{l_i} \times\prod_{i>n} X_i \in\Nn : \ x_i\in N^i_{l_i} \subseteq U_i, 1\leq i\leq n \right\}.
\]
Thus $\Nn$ is a countable $cn$-network in $X$ at $x$.

The case when  $\Nn_i$ are networks is considered analogously.

(2) Assume that $\Nn_i$ are $ck$-networks.  By definition, for every $1\leq i\leq n$, there exists a neighborhood $W_i \subset U_i$ of $x_i$ such that for each compact subset $K_i$ of $W_i$ there is a finite subfamily $\mathcal{F}_i \subset \Nn_i$ such that $x_i \in \bigcap \mathcal{F}_i$ and $K_i \subset \bigcup \mathcal{F}_i \subset U_i$. Set $W:= \prod_{i=1}^n W_i \times\prod_{i>n} X_i$. Then each compact subset $K$ of $W$ is contained in a set of the form $\prod_{i=1}^n K_i \times\prod_{i>n} X_i$, where $K_i$ is a compact subset of $W_i$. Clearly,
\[
K \subset \bigcup \left\{ \prod_{i=1}^n N^i_{l_i} \times\prod_{i>n} X_i \in\Nn : \ x_i\in N^i_{l_i}\in \mathcal{F}_i, 1\leq i\leq n \right\} \subset U.
\]
Thus $\Nn$ is a countable $ck$-network at $x$.

The cases $\Nn_i$ are $cs^\ast$-networks or $cs$-networks are considered analogously.

(3) Assume that $\Nn_i$ are $cp$-networks. First we prove the following claim.

\vspace{2mm}

{\bf Claim}. The product $X_1\times X_2$ of two  spaces $X_1$ and $X_2$ with countable $cp$-networks at $x_1$ and $x_2$ has a countable $cp$-network at $x=(x_1,x_2)$.

\begin{proof}
Let $A$ be a subset of $X_1\times X_2$ such that $x\in \overline{A}\setminus A$ and $U_1\times U_2$ be a neighborhood of $x$. 

For $i\in\{ 1,2\}$, consider the countable family $\mathcal{P}_i =\{ N^i_n \in \Nn_i : x_i \in N^i_n \subset U_i\}$ and let $\mathcal{P}_i =\{ N^i_{n_k} \}_{k\in\omega}$ be its enumeration. For every $k\in\omega$, set $P_{k,i} =\bigcup_{l\leq k} N^i_{n_l}$ and $P_k = P_{k,1}\times P_{k,2}$. Then $x\in\bigcap_{k\in\omega} P_k \subset \bigcup_{k\in\omega} P_k \subset U_1\times U_2$. By (1) the family $\mathcal{N}$ is a $cn$-network at $x$, so $V:= \bigcup_{k\in\omega} P_k$ is a neighborhood of $x$.
We  show that $A_k :=P_k\cap A$ is infinite for some $k\in\omega$.

Suppose by  a contradiction that $A_k$ is finite for all $k\in\omega$. Then for every $a=(a_1, a_2)\in (V\cap A)\setminus A_0$ we can find a unique number $k_a\in\omega$ such that $a\in A_{k_a +1}\setminus A_{k_a}$. Since $a\not\in P_{k_a}$, we fix arbitrarily $n_a\in \{ 1,2\}$ such that $a_{n_a} \not\in P_{k_a, n_a}$.

For $n\in\{ 1,2\}$, set $A(n):= \{ a=(a_1, a_2)\in (V\cap A)\setminus A_0 : n_a =n\}$ and $B_n:= p_n(A(n)) \subset X_n$. We claim that $x_n \not\in \overline{B_n}$. We show first that $x_n \not\in B_n$. Indeed, assuming that $x_n \in B_n$ we can find $a=(a_1, a_2)\in A(n)$ such that $a_n = x_n$. However, by the definition of $A(n)$, we have $a_{n} \not\in P_{k_a,n}$ and hence $a_n \not= x_n$. This contradiction shows that $x_n \not\in B_n$. Now we suppose for a contradiction that $x_n \in \overline{B_n}$. Since $\mathcal{N}_n$ is a $cp$-network at $x_n$, we can find $P_{k,n}\in\mathcal{N}_n$ such that $P_{k,n}\cap B_n$ is infinite. On the other hand, for every $a=(a_1, a_2)\in A(n)\setminus A_k$ we have $a_n \not\in P_{k,n}$. So the intersection $P_{k,n}\cap B_n$ is contained in $A_k$ and hence it is finite. This contradiction shows that $x_n \not\in \overline{B_n}$.

For $n\in\{ 1,2\}$, choose an open neighborhood $W_n$ of $x_n$ such that $W_n \cap \overline{B_n}=\emptyset$. Then
\[
[(V\cap A)\setminus A_0] \cap (W_1\times W_2) = (A(1)\cup A(2)) \cap (W_1\times W_2) = \emptyset,
\]
and hence $x\not\in \overline{A}$, a contradiction. Thus $A_k :=P_k\cap A$ is infinite for some $k\in\omega$.

Since $P_k$ is a finite union of elements from $\mathcal{N}$, we obtain that $\mathcal{N}$ is a $cp$-network at $x$. The claim is proved.
\end{proof}

Now let $A$ be a subset of $X$ such that $x\in \overline{A}\setminus A$. 
Set $C:=\pi_n (A)\setminus \{ \pi_n (x)\}$ and $D:= \pi_n^{-1} (\pi_n (x))\cap A$. If $D$ is {\it infinite}, then
\[
D\subset \left( N^1_{m_1}\times \cdots \times N^n_{m_n}\times \prod_{i>n} X_i\right) \cap A,
\]
and hence the last intersection is infinite for any $N^i_k \in\Nn_i$, as desired. If $D$ is {\it finite}, then $\pi_n (x)\in \overline{C}\setminus C$. By Claim, there is $N:= N^1_{m_1}\times \cdots \times N^n_{m_n}$ such that $N\cap C$ is infinite. Then $(N\times \prod_{i>n} X_i) \cap A$ is infinite as well. Thus $\mathcal{N}$ is a countable $cp$-network at $x$.
\end{proof}

\begin{proposition} \label{p-AlepProd}
For $\nn\in\mathfrak{N}$, the countable product of $\nn\mbox{-}\sigma$-spaces is an $\nn\mbox{-}\sigma$-space.
\end{proposition}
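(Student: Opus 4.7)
The plan is to build the natural product family and then verify it is $\sigma$-locally finite and an $\nn$-network by reducing to Proposition \ref{pProd} applied pointwise. First, since each $X_i$ is regular and regularity is productive, the countable product $X=\prod_{i\in\NN} X_i$ is regular. For each $i$, fix a $\sigma$-locally finite $\nn$-network $\Nn_i=\bigcup_{k\in\NN}\Nn_{i,k}$ in $X_i$ with each layer $\Nn_{i,k}$ locally finite. I then propose as network on $X$ the family
\[
\mathcal{M}=\bigcup_{n\in\NN}\,\bigcup_{(k_1,\dots,k_n)\in\NN^n}
\left\{\,N_1\times\cdots\times N_n\times\prod_{i>n}X_i \ :\ N_j\in\Nn_{j,k_j},\ 1\le j\le n\,\right\}.
\]

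Next I would check $\sigma$-local finiteness. Each slice indexed by a fixed tuple $(n,k_1,\dots,k_n)$ is locally finite in $X$: given $x=(x_i)\in X$, for each $j\le n$ choose a neighborhood $V_j$ of $x_j$ meeting only finitely many members of $\Nn_{j,k_j}$; then the box neighborhood $V_1\times\cdots\times V_n\times\prod_{i>n}X_i$ of $x$ meets only finitely many members of the slice. Since the set of tuples $\bigcup_n\NN^n$ is countable, $\mathcal{M}$ is a countable union of locally finite families, hence $\sigma$-locally finite.

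The heart of the argument is showing $\mathcal{M}$ is an $\nn$-network in $X$, which I would do at each point separately. Fix $x=(x_i)\in X$. For each $i$, set
\[
\Nn_i(x_i):=\{N\in\Nn_i : x_i\in N\}=\bigcup_{k\in\NN}\{N\in\Nn_{i,k}: x_i\in N\}.
\]
Each inner set is finite by local finiteness of $\Nn_{i,k}$, so $\Nn_i(x_i)$ is countable. A direct inspection of Definitions \ref{def1} and \ref{def2} shows that $\Nn_i(x_i)$ is still an $\nn$-network at $x_i$ for every $\nn\in\mathfrak{N}$, because each defining clause at $x_i$ only invokes members of the network containing $x_i$ (for $ck$-networks this is the clause $x_i\in\bigcap\FF$; for $cp$, $cn$, $cs$, $cs^\ast$ and $0$ it is immediate). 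Now Proposition \ref{pProd}, applied to the countable $\nn$-networks $\Nn_i(x_i)$ at $x_i$, produces a countable $\nn$-network at $x$ whose members are all of the form $N_1\times\cdots\times N_n\times\prod_{i>n}X_i$ with $N_j\in\Nn_j(x_j)\subset\Nn_j$, hence lie in $\mathcal{M}$. Therefore $\mathcal{M}$ is an $\nn$-network at $x$, and since $x$ was arbitrary, $X$ is an $\nn\mbox{-}\sigma$-space.

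I do not anticipate any serious obstacle: the combinatorial subtlety specific to the $cp$-case is already absorbed into Proposition \ref{pProd}. The one point that deserves care, and which I would highlight in the writeup, is the small verification that each pointwise restriction $\Nn_i(x_i)$ remains an $\nn$-network at $x_i$ of the same type — especially for $\nn=ck$, where one must ensure the witnessing finite subfamilies $\FF$ from the definition can be chosen inside $\Nn_i(x_i)$, which is free of charge thanks to the explicit condition $x_i\in\bigcap\FF$.
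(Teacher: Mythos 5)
Your proof is correct and follows essentially the same route as the paper: form the countable family of basic boxes $N_1\times\cdots\times N_n\times\prod_{i>n}X_i$, verify $\sigma$-local finiteness slice by slice, and reduce the $\nn$-network property at each point $x$ to Proposition \ref{pProd} applied to the countable point-local subfamilies $\Nn_i(x_i)$. The only cosmetic difference is that the paper starts from \emph{closed} $\sigma$-locally finite networks and cites the proof of Theorem \ref{t-Lind-Set} for the countability of $\{N\in\Nn_i: x_i\in N\}$, whereas you obtain it directly from local finiteness, which works just as well here.
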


\begin{proof}
Let $\DD_k =\bigcup_{s\in\NN} \DD_{s,k}$ be a closed $\sigma$-locally finite $\nn$-network in an $\nn\mbox{-}\sigma$-space $X_k, k\in\NN$. Then the space $X:=\prod_{k\in\NN} X_k$ is regular by \cite[2.3.11]{Eng}.
For each $s,n\in\NN$ set
\[
\Nn_{s,n} :=\left\{ N^1_{m_1}\times \cdots \times N^n_{m_n}\times \prod_{i>n} X_i : \;  N^k_i \in \DD_{s,k}, \ k\in\{ 1,\dots,n\} \right\}.
\]
Clearly, $\Nn_{s,n}$ is locally finite for every $s,n\in\NN$. Hence the family $\Nn := \bigcup_{s,n\in\NN} \Nn_{s,n}$ is $\sigma$-locally finite. Fix $x=(x_k)\in X$. For every $k\in\NN$, the family $\{ D\in\DD_k :x_k\in D\}$ is a countable $\nn$-network at $x_k$, see the proof of Theorem \ref{t-Lind-Set}. Now Proposition \ref{pProd} shows that $\Nn$ is an $\nn$-network for $X$.
\end{proof}

Propositions \ref{pSub}--\ref{p-AlepProd} imply

\begin{corollary} \label{c-Oper}
For $\nn\in\mathfrak{N}$, the class of topological space with countable $\nn$-character is closed under taking subspaces, topological sums and countable products.
\end{corollary}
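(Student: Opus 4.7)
The plan is to derive each of the three closure assertions directly from the three preceding propositions, arguing pointwise throughout. For the subspace assertion, I would fix a space $X$ of countable $\nn$-character and a subspace $A\subset X$; at each $x\in A$, the hypothesis supplies a countable $\nn$-network $\Nn$ at $x$ in $X$, and Proposition \ref{pSub} then shows that $\Nn|_A=\{N\cap A:N\in\Nn\}$ is a countable $\nn$-network at $x$ in $A$. Since $x\in A$ was arbitrary, $A$ has countable $\nn$-character.

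For the topological-sum assertion, given spaces $\{X_i\}_{i\in I}$ of countable $\nn$-character and a point $x\in\bigoplus_{i\in I}X_i$, I would let $i_0$ be the unique index with $x\in X_{i_0}$. Since $X_{i_0}$ is clopen in the sum, the neighborhood filter at $x$ in the sum coincides with the one in $X_{i_0}$; moreover every sequence converging to $x$ is eventually in $X_{i_0}$, every compact set contained in a small enough neighborhood of $x$ lies in $X_{i_0}$, and every subset $A$ of the sum accumulating at $x$ satisfies the same with $A\cap X_{i_0}$. Inspecting the defining conditions in Definitions \ref{def1} and \ref{def2}, each of them is local in precisely this sense, so a countable $\nn$-network at $x$ in $X_{i_0}$ (which exists by hypothesis) automatically serves as a countable $\nn$-network at $x$ in the whole sum. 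Note that Proposition \ref{pSum} alone does not suffice here, since the global union $\bigcup_{i\in I}\Nn_i$ need not be countable when $I$ is uncountable; the pointwise viewpoint bypasses this.

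For the countable-product assertion, let $\{X_i\}_{i\in\NN}$ be spaces of countable $\nn$-character and fix $x=(x_i)\in\prod_{i\in\NN}X_i$. Choosing, for each $i\in\NN$, a countable $\nn$-network $\Nn_i$ at $x_i$, I would feed these directly into Proposition \ref{pProd}, which produces an explicit countable $\nn$-network at $x$ in the product. As $x$ was arbitrary, the product has countable $\nn$-character.

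There is essentially no obstacle, since the heavy lifting has been carried out in Propositions \ref{pSub}, \ref{pSum}, \ref{pProd} (and is paralleled at the global $\sigma$-locally finite level in Proposition \ref{p-AlepProd}). The only genuine subtlety is the small gap flagged in the topological-sum case, where the global assembly of Proposition \ref{pSum} would wreck countability; this is resolved by arguing pointwise and exploiting the clopenness of each summand so that local $\nn$-networks in $X_{i_0}$ transfer verbatim to the sum.
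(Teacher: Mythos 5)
Your proposal is correct and matches the paper's intent: the paper gives no explicit proof, simply asserting that Propositions \ref{pSub}--\ref{p-AlepProd} imply the corollary, and your pointwise arguments are exactly the details being suppressed. Your observation about the topological-sum case is well taken --- Proposition \ref{pSum} as stated concerns global networks and would not preserve countability over an uncountable index set, so the local argument via the clopenness of each summand is indeed the right way to read the paper's citation.
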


\begin{corollary} \label{c-Oper-Sigma}
For $\nn\in\mathfrak{N}$, the class of $\nn\mbox{-}\sigma$-spaces is closed under taking subspaces, topological sums and countable products.
\end{corollary}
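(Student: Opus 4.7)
The plan is to verify the three closure properties separately, in each case combining the preservation of regularity under the operation with the corresponding structural proposition proved earlier in this section.

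For subspaces of an $\nn\mbox{-}\sigma$-space $X$, I would fix a subspace $A\subset X$ and a $\sigma$-locally finite $\nn$-network $\Nn=\bigcup_n \Nn_n$ for $X$, and apply Proposition \ref{pSub} to the family $\Nn|_A$. The only point to verify beyond what Proposition \ref{pSub} already gives is that the restricted family remains $\sigma$-locally finite, but this is immediate since the restriction $\Nn_n|_A$ of a locally finite family $\Nn_n$ to any subspace is locally finite in the subspace topology. Regularity of $A$ is automatic because $T_3$ is hereditary.

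For topological sums, I would let $\{X_i\}_{i\in I}$ be $\nn\mbox{-}\sigma$-spaces with $\sigma$-locally finite $\nn$-networks $\Nn_i=\bigcup_{n\in\NN} \Nn_{i,n}$, set $X=\oplus_{i\in I} X_i$, and consider the family $\Nn=\bigcup_{i\in I} \Nn_i$. Proposition \ref{pSum} gives that $\Nn$ is an $\nn$-network for $X$. To see that $\Nn$ is $\sigma$-locally finite, group the pieces as $\Nn=\bigcup_{n\in\NN}\bigl(\bigcup_{i\in I}\Nn_{i,n}\bigr)$: since every point of $X$ has a neighbourhood contained in a single clopen summand $X_i$, and $\Nn_{i,n}$ is locally finite in $X_i$, each inner union is locally finite in $X$. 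Regularity of the sum follows from regularity of each summand together with the clopenness of the $X_i$ in $X$.

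For countable products, Proposition \ref{p-AlepProd} already supplies a $\sigma$-locally finite $\nn$-network in $\prod_{k\in\NN} X_k$, and a countable product of regular spaces is regular by \cite[2.3.11]{Eng}, so nothing further is needed. Assembling these three observations yields the corollary. There is no genuine obstacle here: all the substantive work, in particular the subtle product argument for $\nn=cp$, was carried out inside Propositions \ref{pSub}--\ref{p-AlepProd}, and this corollary is only a packaging of those results together with the standard preservation of regularity.
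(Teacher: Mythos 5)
Your proof is correct and takes essentially the same route as the paper, whose entire argument for this corollary is the single line that Propositions \ref{pSub}--\ref{p-AlepProd} imply it. The details you add --- that restricting a locally finite family to a subspace and regrouping the summand families $\bigcup_{i\in I}\Nn_{i,n}$ preserve $\sigma$-local finiteness, and that regularity survives each of the three operations --- are exactly the routine verifications the paper leaves implicit.
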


\begin{corollary}[\cite{Banakh,Mich}] \label{cProperty}
The classes of cosmic, $\aleph_0$-spaces and  $\Pp_0$-spaces are closed under taking subspaces, countable topological sums and countable products.
\end{corollary}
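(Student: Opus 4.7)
The plan is to deduce Corollary \ref{cProperty} directly from the three ``building block'' propositions for $\nn$-networks, together with the characterizations of the three classes in terms of countable $\nn$-networks. Recall that a regular space $X$ is cosmic (resp.\ an $\aleph_0$-space, resp.\ a $\Pp_0$-space) if and only if $X$ has a countable $cn$-network (resp.\ a countable $ck$-network, resp.\ a countable $cp$-network); this equivalence was recorded in the discussion following Definition~\ref{def3}. Thus each of the three classes is obtained by intersecting regularity with the property ``$\nn_\chi w(X) \le \aleph_0$'' for the appropriate $\nn \in \{0, ck, cp\}$, where we globally require a single countable $\nn$-network rather than one at each point.

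First I would handle subspaces. If $\Nn$ is a countable $\nn$-network in $X$ and $A\subset X$, then by Proposition~\ref{pSub} the trace $\Nn|_A = \{N\cap A : N\in\Nn\}$ is a countable $\nn$-network in $A$, and regularity passes to subspaces; so each of the three classes is hereditary. Next, for countable topological sums $\oplus_{i\in I} X_i$ with $|I|\le\aleph_0$, if each $X_i$ has a countable $\nn$-network $\Nn_i$, then $\Nn=\bigcup_{i\in I} \Nn_i$ is a countable family that is an $\nn$-network for the sum by Proposition~\ref{pSum}, and regularity is preserved under topological sums.

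For countable products $X=\prod_{n\in\NN} X_n$, regularity is preserved by \cite[2.3.11]{Eng}. Applying Proposition~\ref{pProd} pointwise and then taking the union over all finite patterns of indices yields a countable family
\[
\Nn :=\left\{ N^1_{m_1}\times \cdots \times N^n_{m_n}\times \prod_{k>n} X_k : n\in\NN,\; N^i_{m_i}\in\Nn_i \right\},
\]
which (being a countable union of countable $\nn$-networks at each point $x\in X$, one for each $x$) is a countable global $\nn$-network for $X$. Here I am using that the family exhibited in Proposition~\ref{pProd} depends only on the $\Nn_i$ and not on the chosen point, so the same countable family works simultaneously at every $x\in X$.

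Combining these three steps gives the corollary for each $\nn\in\{0, ck, cp\}$ separately, which covers the classes of cosmic spaces, $\aleph_0$-spaces, and $\Pp_0$-spaces respectively. I do not expect a genuine obstacle here; the proof is essentially bookkeeping, since all the technical content has been absorbed into Propositions~\ref{pSub}--\ref{pProd} and the equivalent countable-network characterizations of the three classes.
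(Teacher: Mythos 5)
Your proof is correct and follows the same route the paper intends: Corollary \ref{cProperty} is presented there as an immediate consequence of Propositions \ref{pSub}--\ref{p-AlepProd} together with the characterization of cosmic, $\aleph_0$- and $\Pp_0$-spaces as regular spaces possessing a countable $cn$-, $ck$- or $cp$-network, respectively. Your observation that one may drop the constraint $x_i\in N^i_{m_i}$ from the family in Proposition \ref{pProd} to obtain a single countable family that is simultaneously an $\nn$-network at every point of the product is precisely the bookkeeping step needed, since enlarging an $\nn$-network at a point preserves that property.
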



\section{Function spaces} \label{secFunc}


The following theorem initialized by Michael's Theorem \ref{tMichael}(i) is one of the most important and interesting applications of $\aleph$-spaces.
\begin{theorem} \label{tMichael}
Let $X$ be an $\aleph_0$-space. Then:
\begin{itemize}
\item[{\rm (i)}] {\em (\cite{Mich})} If $Y$ is an  $\aleph_0$-space, then $C_c(X,Y)$ is also an $\aleph_0$-space.
\item[{\rm (ii)}] {\em (\cite{foged,OMe})} If $Y$ is a (paracompact) $\aleph$-space, then  $C_c(X,Y)$ is  a (paracompact) $\aleph$-space.
\end{itemize}
\end{theorem}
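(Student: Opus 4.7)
The plan is to implement Michael's classical ``transfer set'' construction, using countable $k$-networks on $X$ and suitable $k$-networks on $Y$. First I would fix a countable closed $k$-network $\mathcal{P}$ for $X$ that is closed under finite unions (available since $X$ is an $\aleph_0$-space). For part (i) I would fix a countable closed $k$-network $\mathcal{Q}$ for $Y$, closed under finite unions; for part (ii) I would fix a closed $\sigma$-locally finite $k$-network $\mathcal{Q} = \bigcup_{n \in \NN} \mathcal{Q}_n$ with each $\mathcal{Q}_n$ locally finite, after a standard reduction closed under finite unions inside each layer. For $P \in \mathcal{P}$ and $Q \in \mathcal{Q}$ set
\[
[P, Q] := \{f \in C(X, Y) : f(P) \subseteq Q\},
\]
and take $\mathcal{N} := \{[P, Q] : P \in \mathcal{P}, Q \in \mathcal{Q}\}$ as the candidate network on $C_c(X, Y)$.

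For part (i), I would verify that the countable family $\mathcal{N}$ is a $k$-network. Given a compact $\mathcal{F} \subseteq C_c(X, Y)$ inside a basic open neighborhood, closure of $\mathcal{P}$ and $\mathcal{Q}$ under finite operations reduces the task to a subbasic $[K, V]$ with $K$ compact in $X$ and $V$ open in $Y$. For each $f \in \mathcal{F}$, continuity gives $K \subseteq f^{-1}(V)$, so the $k$-network property of $\mathcal{P}$ yields $P_f \in \mathcal{P}$ with $K \subseteq P_f \subseteq f^{-1}(V)$. Compactness of $\mathcal{F}$ in the compact-open topology guarantees (via the standard Ascoli-type argument) that $\bigcup_{g \in \mathcal{F}} g(K)$ is relatively compact in $V$, so the $k$-network property of $\mathcal{Q}$ produces $Q_f \in \mathcal{Q}$ with $f(P_f) \subseteq Q_f \subseteq V$, uniformly over a neighborhood of $f$ in $\mathcal{F}$; a final compactness extraction gives a finite subfamily of $\mathcal{N}$ covering $\mathcal{F}$ and contained in $[K, V]$.

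For part (ii) I would additionally verify $\sigma$-local-finiteness. Enumerating $\mathcal{P} = \{P_k\}_{k \in \NN}$ and setting $\mathcal{N}_{k, n} := \{[P_k, Q] : Q \in \mathcal{Q}_n\}$, the decomposition $\mathcal{N} = \bigcup_{k, n \in \NN} \mathcal{N}_{k, n}$ reduces the task to showing that each $\mathcal{N}_{k, n}$ is locally finite in $C_c(X, Y)$. The guiding idea is: if $f(P_k)$ lies in a compact $L \subseteq Y$, local finiteness of $\mathcal{Q}_n$ at $L$ yields an open $V \supseteq L$ meeting only finitely many $Q \in \mathcal{Q}_n$, whence $[P_k, V]$ is a neighborhood of $f$ meeting only finitely many members of $\mathcal{N}_{k, n}$ (since $[P_k, V] \cap [P_k, Q] \neq \emptyset$ forces $V \cap Q \neq \emptyset$ whenever $P_k \neq \emptyset$).

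The main obstacle in (ii) is that a priori $P_k$ is only closed in $X$, so $f(P_k)$ need not be relatively compact in $Y$ and local finiteness of $\mathcal{Q}_n$ does not transfer directly. The classical resolution (due to Foged and O'Meara) is to refine the indexing by working with pairs $(C, P)$, where $C$ ranges over a countable family of compact subsets of $X$ and $P \in \mathcal{P}$ contains $C$; the transfer sets $[C, Q] \cap [P, V]$ use the compactness of $f(C)$ to control the $Q$-side local finiteness while $P$ continues to absorb the subbasic open-set geometry, and the extra index $C$ stays countable so the full family remains $\sigma$-locally finite. For the paracompactness preservation, I would invoke the characterization that a regular space with a $\sigma$-locally finite closed $k$-network is a paracompact $\aleph$-space; since $C_c(X, Y)$ is regular (as $Y$ is regular) and the refined $\mathcal{N}$ can be arranged to consist of closed sets, this yields the paracompact conclusion.
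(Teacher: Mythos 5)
First, a point of reference: the paper does not prove this theorem at all -- it is quoted from Michael, Foged and O'Meara -- so your proposal can only be compared with the classical arguments and with the paper's proof of Theorem \ref{t-strong-Pyt-A}, which follows the same template. Your overall strategy (transfer sets $[P;Q]$ built from a countable $k$-network $\mathcal{P}$ on $X$ and a countable, resp.\ $\sigma$-locally finite, $k$-network $\mathcal{Q}$ on $Y$) is indeed the classical one, but the central verification in part (i) has a genuine gap. After choosing $P_f\in\mathcal{P}$ with $K\subseteq P_f\subseteq f^{-1}(V)$, you assert that the $k$-network property of $\mathcal{Q}$ produces a single $Q_f\in\mathcal{Q}$ with $f(P_f)\subseteq Q_f\subseteq V$. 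This is exactly where the argument breaks: $P_f$ is only closed, not compact, so $f(P_f)$ need not be compact, and a $k$-network provides no member squeezed between a non-compact subset of $V$ and $V$. (You flag precisely this non-compactness problem in part (ii), but it already derails part (i).) Moreover, even granting such $Q_f$, the sets $[P;Q]$ are not open, so the ``final compactness extraction'' cannot produce a finite subcover of the compact set $\mathcal{F}$ from them.

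The classical repair -- visible in the paper's proof of Theorem \ref{t-strong-Pyt-A}, Claim \ref{claim-1} -- is to interleave a \emph{decreasing} sequence $K_j$ of finite intersections of members of $\mathcal{P}$ containing $K$ (so that every open neighbourhood of $K$ absorbs all but finitely many $K_j$) with an \emph{increasing} sequence $N_j$ of finite unions of members of $\mathcal{Q}$ containing $L:=\bigcup_{g\in\mathcal{F}}g(K)$ inside $V$. The transfer sets $[K_j;N_j]$ then form an increasing chain inside $[K;V]$, and one shows $\mathcal{F}\subseteq[K_{j_0};N_{j_0}]$ for a single $j_0$ by contradiction: witnesses $x_j\in K_j$ with $g_j(x_j)\notin N_j$ admit a subsequence converging into $K$ (compact subsets of $\aleph_0$-spaces are metrizable), their images accumulate in $V$, and applying the $k$-network property of $\mathcal{Q}$ to the resulting \emph{compact} set forces it into some $N_{j_0}$, a contradiction. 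None of this mechanism appears in your sketch, and it is the heart of the proof, both for (i) and for the $k$-network verification in (ii). Finally, your closing step for the paracompact case, ``a regular space with a $\sigma$-locally finite closed $k$-network is a paracompact $\aleph$-space,'' is not a valid principle: $\aleph$-spaces are in general only subparacompact, which is precisely why the statement of the theorem carries ``(paracompact)'' as a parenthetical hypothesis on $Y$ and why O'Meara's paper is devoted to establishing paracompactness of $C_c(X,Y)$ as a separate result rather than reading it off from the existence of the network.
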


Recall that for a first countable space $X$ the space  $C_c(X)$ is metrizable if and only if it is Fr\'{e}chet-Urysohn, see \cite{McCoy}. The following proposition says something similar for the case $C_c(X,G)$, where $G$ is  an arbitrary topological group.
\begin{proposition}\label{nice}
Let $X$ be an $\aleph_{0}$-space and $G$ a topological group. If $G$ is an $\aleph$-space, then  $C_{c}(X,G)$ has countable $cs^\ast$-character.  Consequently, $C_{c}(X,G)$ is metrizable if and only if it is Fr\'{e}chet-Urysohn.
\end{proposition}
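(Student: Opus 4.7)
The plan is to combine the $\aleph$-space preservation result for function spaces (Theorem \ref{tMichael}(ii)) with the first-countability characterisation of Proposition \ref{FirstCount}, exploiting the fact that $C_c(X,G)$ inherits a topological group structure from $G$.

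First I would apply Theorem \ref{tMichael}(ii) of Foged and O'Meara: since $X$ is an $\aleph_0$-space and $G$ is an $\aleph$-space, the function space $C_c(X,G)$ is itself an $\aleph$-space. Corollary \ref{cAleph-cs} then yields immediately that $cs^{\ast}_\chi(C_c(X,G))\leq\aleph_0$, which is the first assertion of the proposition.

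For the equivalence ``metrizable $\Leftrightarrow$ Fr\'echet-Urysohn'', the implication from metrizability to Fr\'echet-Urysohness is trivial, so I only need to show that a Fr\'echet-Urysohn $C_c(X,G)$ is metrizable. The key observation is that, because $G$ is a topological group, pointwise multiplication and inversion turn $C_c(X,G)$ into a Hausdorff topological group. By Nyikos's theorem from \cite{nyikos} (recalled in Section \ref{secMetr}), every Fr\'echet-Urysohn topological group satisfies the property $(\alpha_4)$; hence under the standing hypothesis $C_c(X,G)$ becomes a Fr\'echet-Urysohn $(\alpha_4)$-space with countable $cs^\ast$-character. The implication (iii)$\Rightarrow$(i) of Proposition \ref{FirstCount} then forces $C_c(X,G)$ to be first countable, and the Birkhoff--Kakutani theorem upgrades first countability to metrisability inside the class of Hausdorff topological groups.

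The argument is essentially a synthesis of previously assembled ingredients, so no single step presents a genuine obstacle. The main subtlety is noticing that the group structure on $C_c(X,G)$ is precisely what allows Nyikos's theorem to promote Fr\'echet-Urysohness to $(\alpha_4)$, which in turn bridges the gap between the ``local'' countable $cs^\ast$-character supplied by Corollary \ref{cAleph-cs} and genuine first countability; without the group hypothesis on $G$ this bridging step is unavailable, which is why the result is stated for group-valued function spaces rather than for arbitrary $\aleph$-space targets.
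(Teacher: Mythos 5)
Your proof is correct. The first assertion is established exactly as in the paper: Theorem \ref{tMichael}(ii) makes $C_c(X,G)$ an $\aleph$-space, and Corollary \ref{cAleph-cs} gives countable $cs^\ast$-character. For the metrizability equivalence the paper takes a shortcut you avoid: it simply cites Theorem 3 of \cite{BZ}, which states that every Fr\'echet--Urysohn topological group with countable $cs^\ast$-character is metrizable. You instead reassemble that result from pieces already recalled in the paper --- Nyikos's theorem to upgrade Fr\'echet--Urysohness to $(\alpha_4)$ in the topological group $C_c(X,G)$, then the implication (iii)$\Rightarrow$(i) of Proposition \ref{FirstCount} to get first countability, and finally Birkhoff--Kakutani to get metrizability. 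Since the paper itself notes that (i)$\Leftrightarrow$(iii) of Proposition \ref{FirstCount} comes from \cite{BZ}, your argument is essentially an unpacking of the black-box citation, and it parallels the paper's own proof of (iii)$\Rightarrow$(i) in Theorem \ref{tMetr-Pyt} (which uses the O'Meara metrization criterion of Theorem \ref{pComMet}(ii) where you use Birkhoff--Kakutani). The one hypothesis you use that the paper leaves implicit --- that pointwise operations make $C_c(X,G)$ a Hausdorff topological group --- is standard for the compact-open topology, so nothing is missing; your version is marginally more self-contained, at the cost of invoking Birkhoff--Kakutani, which the paper never states.
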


\begin{proof}
 By Theorem \ref{tMichael} the space $C_{c}(X,G)$ is an $\aleph$-space. By Corollary \ref{cAleph-cs} the space $C_{c}(X,G)$ has countable $cs^\ast$-character. Assume that $C_c(X,G)$ is  Fr\'{e}chet-Urysohn. Since every Fr\'echet-Urysohn group which has countable $cs^\ast$-character is metrizable  by  \cite[Theorem 3]{BZ}, the  topological group $C_c(X,G)$ is metrizable.  The converse is trivial.
\end{proof}

Theorem \ref{tMichael} inspires the following question.
\begin{question} \label{question-Func}
Let $X$ be an $\aleph_0$-space and $Y$ be a (paracompact) strict $\aleph$-space or a (paracompact) $\Pp$-space. Is $C_c(X,Y)$ a (paracompact) strict $\aleph$-space or a (paracompact) $\Pp$-space, respectively?
\end{question}
Note that  $C_c(X,Y)$ is a paracompact $\Pp$-space for any $\aleph_0$-space $X$ and each {\em metrizable} space $Y$ (see \cite{BG}). We know (see Corollary \ref{c-Aleph}) that any  $\Pp$-space has the strong Pytkeev property. So these results and  Theorem \ref{t-strong-Pyt-A} increase a hope that the answer to Question \ref{question-Func} might be positive.

In order to prove Theorem \ref{t-strong-Pyt-A} we  need the following lemma proved in \cite[Theorem 5, p. 223]{Kelley}.
\begin{lemma} \label{l-Kelley}
Let $C$ be a compact subspace of a Hausdorff space  $X$. Then the map $(x,f)\mapsto f(x)$, from $C\times C_c(X,Y)$ to $Y$, is continuous.
\end{lemma}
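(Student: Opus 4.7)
The plan is to establish joint continuity of the evaluation map by combining the pointwise continuity of $f_0$ with a compactness–regularity argument on $C$ and a single subbasic open set of the compact-open topology.

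Fix a point $(x_0, f_0) \in C \times C_c(X,Y)$ and an open neighborhood $V$ of $f_0(x_0)$ in $Y$. I want to produce an open neighborhood of $(x_0, f_0)$ whose image under evaluation lies in $V$. First I would use continuity of $f_0 \colon X \to Y$ at $x_0$ to pick an open set $U \subset X$ with $x_0 \in U$ and $f_0(U) \subset V$. Then I would pass to the trace $U \cap C$, which is an open neighborhood of $x_0$ in $C$.

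The key step is to manufacture a compact neighborhood of $x_0$ inside $U \cap C$. Since $C$ is compact Hausdorff (as a compact subspace of a Hausdorff space), it is regular, so there exists an open set $W$ of $C$ with
\[
x_0 \in W \subset \overline{W}^{\,C} \subset U \cap C.
\]
Set $K := \overline{W}^{\,C}$. Then $K$ is closed in the compact space $C$, hence compact, and $K \subset U$ guarantees $f_0(K) \subset V$. This is the one place where the hypothesis that $C$ is compact (and $X$ Hausdorff, so that $C$ inherits regularity) is really used.

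Now I would invoke the definition of the compact-open topology: the set $[K,V] := \{\, f \in C_c(X,Y) : f(K) \subset V\,\}$ is a subbasic open neighborhood of $f_0$ in $C_c(X,Y)$. The product $W \times [K,V]$ is therefore an open neighborhood of $(x_0, f_0)$ in $C \times C_c(X,Y)$. For any $(x,f)$ in this neighborhood one has $x \in W \subset K$ and $f(K) \subset V$, whence $f(x) \in V$. This verifies continuity of the evaluation map at the arbitrary point $(x_0, f_0)$, completing the proof. The main (and really only) subtlety is the passage from an arbitrary open neighborhood $U$ of $x_0$ to a compact neighborhood contained in it; once that is done, the compact-open topology furnishes the matching neighborhood of $f_0$ for free.
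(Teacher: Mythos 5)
Your proof is correct and is the standard argument for this classical fact: the paper itself gives no proof but cites Kelley's \emph{General Topology} (Theorem 5, p.~223), and your use of regularity of the compact Hausdorff space $C$ to shrink an arbitrary neighborhood $U\cap C$ of $x_0$ to a compact neighborhood $K$, followed by the subbasic set $[K;V]$, is exactly that argument. All steps check out, including the points that $K$ is compact in $X$ (not just in $C$) so that $[K;V]$ is indeed subbasic open, and that $W$ need only be open in the subspace $C$.
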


\textcolor{blue}{
Let $X$ and $Y$ be topological spaces. For arbitrary subsets $A\subseteq X$ and $B\subseteq Y$ set
\[
[A;B]:= \{ f\in C(X,Y): \ f(A)\subseteq B\}.
\]
The compact-open topology on the space $C(X,Y)$ is defined by a subbase consisting of the sets $[K;U]$ with $K\subseteq X$ compact and $U\subseteq Y$ open.
}

We are at the position to prove Theorem \ref{t-strong-Pyt-A}.

\begin{proof}[Proof of Theorem \ref{t-strong-Pyt-A}]
For the $\aleph_0$-space $X$ fix a countable $k$-network  $\KK$, which is closed under taking finite unions and finite intersections. For fixed  $\nn\in\{ck,cp\}$, fix a closed (increasing) $\sigma$-locally finite $\nn$-network  $\DD =\bigcup_{j\in\w} \DD_j$ in the  $\nn\mbox{-}\sigma$-space $Y$. To prove two cases (1)  and (2) of Theorem  \ref{t-strong-Pyt-A}  we need  to show that for every function $f\in C_c(X,Y)$ there exists a countable $\nn$-network at $f$.

Fix $f\in C_c(X,Y)$. Since $f(X)$ is a Lindel\"{o}f subspace of $Y$, Theorem \ref{t-Lind-Set} implies that  there exists a sequence $\DD_f =\{ D_k\}_{k\in\NN}\subset \DD$  which is a countable $\nn$-network at each point of $f(X)$ and satisfies the condition: if $\mathrm{K}\subset f(X)\cap U$ with $\mathrm{K}$ compact and $U$ open, then there is an open subset $W$ of $Y$ such that
\begin{itemize}
\item[{\rm (a)}] $\mathrm{K}\subset W \subset \bigcup_{k\in I(U)} D_k \subset U$, where $I(U)=\{ k\in\NN : D_k\subset U\}$, and
\item[{\rm (b)}]  for each compact subset $C$ of $W$ there is a finite subfamily $\alpha$ of $I(U)$ for which $C\subset \bigcup_{k\in\alpha} D_k$.
\end{itemize}
Let $\Nn_f$ be the countable family containing of all finite unions and intersections of elements of the sequence $\DD_f$. We claim that the countable family
\[
[\kern-2pt[ \KK; \Nn_f ]\kern-2pt]=\big\{[K_1;N_1]\cap \dots\cap [K_n;N_n]: K_1,\dots,K_n\in\KK,\;N_1,\dots,N_n\in\mathcal{N}_f \big\}
\]
is  an $\nn$-network at $f$ in $C_c(X,Y)$.

Fix an open neighborhood $O_f\subset C_c(X,Y)$ of $f$. Without loss of generality we can assume  that the neighborhood $O_f$ is of basic form
\[
O_f = [C_1;U_1]\cap\cdots\cap [C_n;U_n]
\]
for some compact sets $C_1,\dots,C_n$ in $X$ and some open sets $U_1,\dots,U_n$ in $Y$.

For every $i\in\{ 1,\dots, n\}$, consider the countable family
\[
\KK_i :=\{ K\in\KK : C_i \subseteq K\subseteq f^{-1}(U_i)\},
\]
and let $\KK_i = \{ K'_{i,j} \}_{j\in\w}$ be its enumeration. For every $j\in\NN$ we set $K_{i,j} := \bigcap_{k\leq j} K'_{i,j}$. It follows that the decreasing sequence $\{ K_{i,j}\}_{j\in\w}$ converges to $C_i$ in the sense that each open neighborhood of $C_i$ contains all but finitely many sets $K_{i,j}$.

For every $i\in\{ 1,\dots, n\}$, consider the countable family (which is non-empty by (b))
\[
\Nn_i :=\{ N\in\Nn_f : f(C_i) \subset N\subset U_i\},
\]
and let $W_i$ be an open neighborhood of $f(C_i)$  satisfying (a) and (b). Let $\{ N'_{i,j} \}_{j\in\w}$ be an enumeration of $\Nn_i$. For every $j\in\w$ we set $N_{i,j} := \bigcup_{k\leq j} N'_{i,j} \in \Nn_i$. It follows from (a) that $\{ N_{i,j}\}_{j\in\w}$ is an increasing sequence of sets in $Y$ with
\[
f(C_i)\subset  W_i \subset \bigcup_{j\in\w} N_{i,j} \subset U_i \; \mbox{ and } \; f(C_i)\subset \bigcap_{j\in\w} N_{i,j}.
\]

Then the sets
\[
\FF_j := \bigcap_{i=1}^n [K_{i,j};N_{i,j}] \in [\kern-2pt[ \KK; \Nn_f ]\kern-2pt],\;\;j\in\w,
\]
form an increasing sequence of sets in the function space $C_c(X,Y)$. Set $W_f :=\bigcap_{i=1}^n [C_i; W_i]$.

\begin{claim}\label{claim-1}
$f\in W_f =\bigcap_{i=1}^n [C_i; W_i]\subset \bigcup_{j\in\w} \FF_j \subset  O_f=\bigcap_{i=1}^n [C_i; U_i].$
\end{claim}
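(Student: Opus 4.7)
The claim is a chain $f \in W_f \subset \bigcup_{j\in\w}\FF_j \subset O_f$; my plan is to dispose of the outer inclusions first, since they follow directly from the way the indexed families were built. For $f \in W_f$ I would just observe that $W_i$ was selected as a neighborhood of $f(C_i)$, so $f \in [C_i; W_i]$ for every $i$. For $\bigcup_{j\in\w}\FF_j \subset O_f$ I would unpack an arbitrary $g \in \FF_j$: by definition $g(K_{i,j}) \subset N_{i,j}$ for each $i$, and combining $C_i \subset K_{i,j}$ (every member of $\KK_i$ contains $C_i$) with $N_{i,j} \subset U_i$ (every member of $\Nn_i$ is inside $U_i$) gives $g(C_i) \subset U_i$, so $g \in O_f$.

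The substantive step is the middle inclusion $W_f \subset \bigcup_{j\in\w}\FF_j$. Given $g \in W_f$, the plan is, for each $i \in \{1,\dots,n\}$, to produce a domain index $p_i$ and a codomain index $q_i$ such that $g(K_{i,j}) \subset N_{i,j}$ for every $j \geq \max(p_i,q_i)$; then $j^* := \max_{i} \max(p_i,q_i)$ delivers $g \in \FF_{j^*}$. To find $p_i$, I would apply the $k$-network property of $\KK$ to the compact set $C_i$ inside the open neighborhood $g^{-1}(W_i) \cap f^{-1}(U_i)$, using closure of $\KK$ under finite unions to consolidate the covering finite subfamily into a single member $K \in \KK$ with $C_i \subset K \subset g^{-1}(W_i) \cap f^{-1}(U_i)$. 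This $K$ lies in $\KK_i$, so $K = K'_{i,p_i}$ for some $p_i$, and decreasing monotonicity of the $K_{i,j}$'s forces $K_{i,j} \subset K \subset g^{-1}(W_i)$ for all $j \geq p_i$. Assuming (as one may for an $\aleph_0$-space) that the members of $\KK$ are compact, $g(K_{i,p_i})$ is then a compact subset of $W_i$.

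To find $q_i$ I would invoke condition (b) of Theorem \ref{t-Lind-Set} applied to the compact set $f(C_i) \cup g(K_{i,p_i}) \subset W_i$, obtaining a finite $\alpha_i \subset I(U_i)$ with $f(C_i) \cup g(K_{i,p_i}) \subset N_i^* := \bigcup_{k \in \alpha_i} D_k$. As a finite union of members of $\DD_f$ that contains $f(C_i)$ and sits inside $U_i$, the set $N_i^*$ belongs to $\Nn_i$ and hence equals $N'_{i,q_i}$ for some $q_i$. Increasing monotonicity of the $N_{i,j}$'s then yields $N_{i,j} \supset N_i^* \supset g(K_{i,p_i}) \supset g(K_{i,j})$ whenever $j \geq \max(p_i,q_i)$, which is exactly what is needed. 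I expect the main obstacle to be precisely this matching of a domain expansion (from $C_i$ out to $K_{i,j}$) with a codomain contraction (from $W_i$ down to $N_{i,j}$) at a single common index $j^*$; property (b), engineered into the choice of $W_i$ by Theorem \ref{t-Lind-Set}, is exactly what makes the two refinements compatible, and the finiteness of $\{1,\dots,n\}$ then permits passage to the uniform $j^*$.
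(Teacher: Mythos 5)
Your outer inclusions and the overall reduction are fine: the sets $\FF_j$ do increase in $j$, so producing, for each $i$, indices $p_i,q_i$ with $g(K_{i,j})\subset N_{i,j}$ for all $j\ge\max(p_i,q_i)$ and then taking the maximum over the finitely many $i$ would indeed prove the middle inclusion. The gap is the parenthetical assumption that the members of $\KK$ may be taken compact. This is false for $\aleph_0$-spaces in general: a countable $k$-network consisting of compact sets would make every compact subset of $X$ a subset of a finite union of its members (take $U=X$), i.e.\ $X$ would be hemicompact; but $\mathbb{Q}$ is an $\aleph_0$-space which is first countable and not locally compact, hence not hemicompact. So $K'_{i,p_i}$ need not be compact, $g(K_{i,p_i})$ need not be compact, and condition (b) of Theorem \ref{t-Lind-Set} --- which applies only to \emph{compact} subsets of $W_i$ --- cannot be invoked to trap $g(K_{i,p_i})$ inside a single finite union $N_i^\ast$ of the $D_k$'s. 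Without (b) you only get $g(K_{i,p_i})\subset W_i\subset\bigcup_{k\in I(U_i)}D_k$, an infinite cover, which is not enough to land in some $N'_{i,q_i}\in\Nn_i$. This compactness is precisely the load-bearing step, so the argument as written does not go through.

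The paper avoids this by proving the middle inclusion by contradiction: if $g\in\bigcap_{i}[C_i;W_i]$ misses every $\FF_j$, one extracts witness points $x_j\in K_{i_j,j}$ with $g(x_j)\notin N_{i_j,j}$, applies the Pigeonhole Principle to fix one coordinate $m$, uses the convergence of the decreasing sequence $\{K_{m,j}\}$ to $C_m$ to see that $C_m\cup\{x_j\}$ is compact and hence metrizable (compact subsets of $\aleph_0$-spaces are metrizable), passes to a subsequence with $x_j\to x'\in C_m$, and then applies (b) to the genuinely compact set $g(C_m)\cup\{g(x_j)\}_{j\in J'}\subset W_m$. In other words, the compact set needed for (b) is manufactured from a convergent sequence of images rather than assumed of the $k$-network. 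If you wish to keep your direct ``find $p_i$ and $q_i$'' formulation, you would still have to run essentially this sequential argument, for each fixed $i$, to show that $g(K_{i,j})\subset N_{i,j}$ for all large $j$.
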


\begin{proof}
We need to prove only the first inclusion. Suppose for a contradiction that there exists a function $g\in  \bigcap_{i=1}^n [C_i; W_i]$ which does not belong to $\bigcup_{j\in\w} \FF_j$. Then for every $j\in\w$ we can find an index $i_j\in \{1,\dots,n\}$ such that $g\not\in [K_{i_j,j};N_{i_j,j}]$. This means that $g(x_j)\not\in N_{i_j,j}$ for some point $x_j\in K_{i_j,j}$. By the Pigeonhole Principle, there is $m\in \{1,\dots,n\}$ such that the set $J_m := \{ j\in\NN : i_j =m\}$ is infinite. As the decreasing sequence $\{ K_{m,j} \}_{j\in J_m}$ converges to the compact set $C_m$, the set $C_m \cup \{ x_j\}_{j\in J_m}$ is compact.

Since each compact subset of the $\aleph_0$-space $X$ is metrizable (see Theorem \ref{pComMet}), we can find an infinite subset $J'$ of $J_m$ such that the sequence  $\{ x_j\}_{j\in J'}$ converges to some point $x' \in C_m$. As $g$ is continuous, the sequence $\{ g(x_j)\}_{j\in J'}$ converges to the point $g(x') \in g(C_m) \subset W_m$,  and hence we can assume also that $g(x_j)\in W_m$ for every $j\in J'$. Then we can apply (b) for the compact set $C'=g(C_m) \cup \{ g(x_j)\}_{j\in J'}$ to find a finite subfamily $\FF$ of $\Nn_f$ such that $C'\subset \bigcup\FF$. Consequently, by construction, there is $N_{m,j_0}$ containing $C'$. But this contradicts the choice of the points $x_j$. This contradiction proves the inclusion $\bigcap_{i=1}^n [C_i; W_i]\subset \bigcup_{j\in\w}\FF_j$.
\end{proof}
By Claim \ref{claim-1}, without loss of generality we shall assume that $f\in \FF_j$ for every $j\in\w$.

We continue the proof by distinguishing  two cases which cover the proof of the theorem.

(1): {\em Assume that $\DD$ is a $cp$-network.} Given a subset $A\subset C_c(X,Y)$ with $f\in \overline{A}$ we need to find a set $\FF\in [\kern-2pt[ \KK; \Nn_f ]\kern-2pt]$ such that $f\in\FF \subset O_f$ and moreover $A\cap \FF$ is infinite if $f$ is an accumulation point of the set $A$. We can suppose that $A\subset W_f$.

If $f$ is an isolated point of $C_c(X,Y)$, then $f\in \FF_j \subset O_f$ for all $j\in\w$. Moreover, if $O_f=\{ f\}$, then $\{ f\}\in [\kern-2pt[ \KK; \Nn_f ]\kern-2pt]$, and we are done.

Assume now that $f$ is an accumulation point of $A$ in $C_c(X,Y)$. We show that $A\cap \FF_j$ is infinite for some $j\in\w$. Suppose for a contradiction that for every $j\in\w$ the intersection $A_j := \FF_j \cap A$ is finite. Then, by Claim \ref{claim-1},  $A=A\cap W_f = \bigcup_{j\in\w} A_j$ is the countable union of the increasing sequence $\{ A_j\}_{j\in\w}$ of finite subsets of $C_c(X,Y)$. Below we follow the proof of Theorem 2.1 of \cite{Banakh}.


For every function $\alpha\in A\setminus A_0$ we denote by $j_\alpha$ the unique natural number such that $\alpha\in A_{j_\alpha +1} \setminus A_{j_\alpha} = A_{j_\alpha +1} \setminus \FF_{j_\alpha}$. Since $\alpha\not\in \FF_{j_\alpha} =\bigcap_{i=1}^n [K_{i,j_\alpha};N_{i,j_\alpha}]$, fix arbitrarily an index $i_\alpha\in\{1,\dots,n\}$ such that $\alpha\not\in [K_{i_\alpha,j_\alpha};N_{i_\alpha,j_\alpha}]$ and a point $x_\alpha\in K_{i_\alpha,j_\alpha}$ such that $\alpha(x_\alpha) \not\in N_{i_\alpha,j_\alpha}$.

For every $i\in\{1,\dots,n\}$ consider the subsequence
\[
A(i):= \{ \alpha\in A\setminus A_0 : \ i_\alpha = i\}
\]
and observe that $A\setminus A_0 = \bigcup_{i=1}^n A(i)$.

For every $i\in\{1,\dots,n\}$, set $B_i :=\{ \alpha(x_\alpha): \alpha\in A(i)\}\subset Y$. We claim that the set $f(C_i)$ does not have accumulation points of $B_i$. Indeed, suppose for a contradiction that there  is a point $y\in f(C_i)$ which is an accumulation point of $B_i$. As $\Nn_f$ is a $cp$-network at $y$, there is $N\in\Nn_f$ such that $y\in N\subset W_i$ and $N\cap B_i$ is infinite. Since $\Nn_f$ is closed under taking finite unions, there is $N_{i,j}$ such that $f(C_i)\cup N\subset N_{i,j}\subset W_i$. But the choice of the points $x_\alpha$ guarantees that $\alpha(x_\alpha) \not\in N_{i,j}$ for all $\alpha\in A(i)\setminus A_j$, which yields that the intersection $B_i \cap N_{i,j}\subset \{ \alpha(x_\alpha): \alpha\in A(i)\cap A_j\}$ is finite. This contradicts the choice of the set $N\subset N_{i,j}$. Thus every point $y\in f(C_i)$ has an open neighborhood $O_y \subset W_i$ with finite $O_y\cap B_i$. Since $f(C_i)$ is compact we can find a finite family $Z_i \subset f(C_i)$ such that $V_i := \bigcup_{y\in Z_i} O_y \subset W_i$ is an open neighborhood of $f(C_i)$ having a finite intersection with the set $B_i$.

Since the decreasing sequence $\{ K_{i,j}\}_{j\in\w}$ converges to $C_i$, there is a number $j_i\in\w$ such that $K_{i,j_i} \subset f^{-1}(V_i)$. Take a sufficiently  large $j_i$ such that $V_i \cap \{ \alpha(x_\alpha): \alpha\in A(i)\setminus A_{j_i}\}=\emptyset$. Then the set
\[
C'_i := C_i \cup \{ x_\alpha : \alpha\in A(i)\setminus A_{j_i}\} \subset K_{i,j_i}
\]
is a compact subset of $f^{-1}(V_i)$, and hence the set $\widetilde{O}_f := \bigcap_{i=1}^n [C'_i;V_i]$ is an open neighborhood of $f$ in $C_c(X,Y)$. By construction, for every $1\leq i\leq n$ and each $\alpha\in A(i)\setminus A_{j_i}$, we have $x_\alpha\in C'_i$ and $\alpha(x_\alpha)\not\in V_i$; so $\alpha\not\in \widetilde{O}_f$. Hence
\[
A\cap \widetilde{O}_f=\left(\widetilde{O}_f\cap \bigcup_{i=1}^n A_{j_i}\right) \cup \left(\widetilde{O}_f\cap \bigcup_{i=1}^n  A(i)\setminus A_{j_i}\right) \subset \widetilde{O}_f\cap \bigcup_{i=1}^n A_{j_i}
\]
is finite and $f$ is not an accumulation point of $A$, a contradiction. Thus $A\cap \FF_j$ is infinite for some $j\in\w$. Therefore $[\kern-2pt[ \KK; \Nn_f ]\kern-2pt]$ is a countable $cp$-network at $f$.

(2): {\em Assume that  $\DD$ is a $ck$-network.} We have to show that the countable family $[\kern-2pt[ \KK; \Nn_f ]\kern-2pt]$ is  a $ck$-network at $f$. For this purpose it is enough to prove that the open neighborhood $W_f$ of $f$ witnesses $O_f$ in the definition of $ck$-network at $f$.

Fix a compact subset $A$ of $W_f$. We show that $A\subset \FF_j$ for some $j\in\w$. Suppose for a contradiction that $A\setminus \FF_j \not=\emptyset$ for every $j>0$. Observe that $C_c(X,Y)$ is an $\aleph$-space (see Theorem \ref{tMichael}(ii)), and hence $A$ is metrizable by Theorem \ref{pComMet}. As $A$ is compact, we can find a function $g_j\in A\setminus \FF_j$ such that the sequence $\{ g_j\}_{j\in\w}$ converges to some function $g_0\in A$ in $C_c(X,Y)$. So we can assume that $A=\{ g_j\}_{j\in\w}$.

For every $j>0$ we can find an index $i_j\in \{1,\dots,n\}$ such that $g_j\not\in [K_{i_j,j};N_{i_j,j}]$. This means that $g_j(x_j)\not\in N_{i_j,j}$ for some point $x_j\in K_{i_j,j}$. By the Pigeonhole Principle, there is $m\in \{1,\dots,n\}$ such that the set $J_m := \{ j\in\NN : i_j =m\}$ is infinite. As the decreasing sequence $\{ K_{m,j} \}_{j\in J_m}$ converges to the compact set $C_m$, the set $C_m \cup \{ x_j\}_{j\in J_m}$ is compact.
Since each compact subset of the $\aleph_0$-space $X$ is metrizable, we can find an infinite subset $J'$ of $J_m$ such that the sequence  $\{ x_j\}_{j\in J'}$ converges to some point $x_0 \in C_m$.

Observe that $g_0(x_0)$ belongs to the open set $W_m \subset Y$. Applying Lemma \ref{l-Kelley} with $C=\{ x_0\} \cup \{ x_j\}_{j\in J'}$ we can find an infinite subset $J''$ of $J'$ such that $g_k(x_j)\in W_m$ for every $j,k\in J''\cup\{ 0\}$. Now we apply once again  Lemma \ref{l-Kelley} to the compact set $C'=C_m \cup \{ x_j\}_{j\in J''}$ to obtain that the set
\[
T:=\left\{ g_j (x): \ x\in C', j\in J''\cup\{ 0\} \right\}
\]
is a compact subset of $Y$ contained in $W_m$. Then we  apply (b) for the compact set $T$ to find a finite subfamily $\FF$ of $\Nn_f$ such that $T\subset \bigcup\FF$. Consequently, by construction, there is $N_{m,j_0}$ containing $T$. But this contradicts the choice of the points $x_j$. This contradiction proves that $K\subset \FF_j$ for some $j\in\w$, and hence $[\kern-2pt[ \KK; \Nn_f ]\kern-2pt] $ is  a $ck$-network at $f$.
\end{proof}

If $Y$ is a $\Pp_0$-space, then the family $\DD_f$ in  the proof of Theorem \ref{t-strong-Pyt-A} can be chosen to be common for all $f\in C_c(X,Y)$, so we obtain the following remarkable results.
\begin{corollary}[\cite{Banakh}] \label{c-Banakh}
If $X$ is an $\aleph_0$-space and $Y$ is a  $\Pp_0$-space, then $C_c(X,Y)$ is a $\Pp_0$-space.
\end{corollary}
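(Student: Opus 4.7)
The strategy is to redo the proof of Theorem \ref{t-strong-Pyt-A}(1) with the auxiliary countable network $\DD_f$ chosen once and for all, independently of $f$, so that the resulting family $[\kern-2pt[\KK;\Nn]\kern-2pt]$ becomes a countable $cp$-network for the entire space $C_c(X,Y)$ rather than only at a single prescribed $f$.

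Concretely, since $Y$ is a $\Pp_0$-space, fix a countable $cp$-network $\DD$ for $Y$; by Proposition \ref{fB}, $\DD$ is simultaneously a countable $ck$-network at every $y\in Y$. Since $X$ is an $\aleph_0$-space, fix a countable $k$-network $\KK$ for $X$ that is closed under finite unions and finite intersections. Let $\Nn$ denote the countable family of all finite unions and finite intersections of elements of $\DD$, and consider the countable collection
$$
[\kern-2pt[\KK;\Nn]\kern-2pt] = \{[K_1;N_1]\cap\cdots\cap[K_n;N_n]: n\in\NN,\, K_i\in\KK,\, N_i\in\Nn\}
$$
of subsets of $C_c(X,Y)$.

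To see that $[\kern-2pt[\KK;\Nn]\kern-2pt]$ is a $cp$-network at an arbitrary $f\in C_c(X,Y)$, one replays the proof of Theorem \ref{t-strong-Pyt-A}(1) verbatim, with $\DD_f$ replaced by $\DD$ and $\Nn_f$ by $\Nn$. The only properties of $\DD_f$ used there were: (i) $\DD_f$ is a countable $cp$-network at every point of $f(X)$; and (ii) for every compact $\mathrm{K}\subset f(X)\cap U$ with $U\subset Y$ open, there is an open set $W\supset\mathrm{K}$ satisfying conditions (a), (b) of Theorem \ref{t-Lind-Set}. Property (i) is immediate because $\DD$ is a $cp$-network at every point of $Y$, while (ii) follows by repeating the argument of case (3) in the proof of Theorem \ref{t-Lind-Set}: at each $y\in\mathrm{K}$ the countable subfamily $\{D\in\DD: y\in D\subset U\}$ is a $ck$-network at $y$, and the compactness of $\mathrm{K}$ produces the required $W$. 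With these inputs in place, the construction of the sets $K_{i,j}$, $N_{i,j}$, $\FF_j$ and the verification (Claim 4.2 in the proof of Theorem \ref{t-strong-Pyt-A}) go through unchanged and yield a member of $[\kern-2pt[\KK;\Nn]\kern-2pt]$ containing $f$, lying inside any prescribed basic neighborhood $O_f$ of $f$, and meeting any accumulating set $A\ni f$ in infinitely many points.

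It remains to observe that $C_c(X,Y)$ is regular because $Y$ is (standard for the compact-open topology). Hence $C_c(X,Y)$ is regular and admits the countable $cp$-network $[\kern-2pt[\KK;\Nn]\kern-2pt]$, so it is a $\Pp_0$-space. There is no genuine obstacle beyond the single observation that the $\sigma$-locally finite scaffolding of Theorem \ref{t-strong-Pyt-A} can be collapsed to a single countable global network once $Y$ itself possesses a countable $cp$-network.
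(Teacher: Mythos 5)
Your proposal is correct and is exactly the argument the paper intends: the paper's entire ``proof'' of Corollary \ref{c-Banakh} is the one-line remark that $\DD_f$ in the proof of Theorem \ref{t-strong-Pyt-A} can be chosen common for all $f$ when $Y$ has a countable $cp$-network, which is precisely what you carry out (correctly isolating properties (i) and the (a)--(b) conditions as the only inputs, and noting regularity of $C_c(X,Y)$ at the end).
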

Let $X=\mathbb{Q}$ be the set of rational numbers. Then $C_c(\mathbb{Q})$ is a  $\Pp_0$-space by Corollary \ref{c-Banakh}; so the answer to Question 4 from \cite{GKL2} is negative  that was noticed by Banakh in \cite{Banakh}. The locally convex space $C_c(\mathbb{Q})$ gives also a negative answer to Questions 1 and 7 from \cite{GKL2}, see Remarks 3 and 6 in \cite{GKL2}.

Below we pose a few natural questions which are inspired by the corresponding results for $\aleph_0$-spaces and $\aleph$-spaces.

Recall that a map $f:Y\to X$ of topological spaces is {\it compact-covering} if each compact subset of $X$ is the image of a compact subset of $Y$.
Michael \cite{Mich} obtained the following characterizations of cosmic and $\aleph_0$-spaces.
\begin{theorem}[\cite{Mich}] \label{tCA}
Let $X$ be a regular space. Then:
\begin{itemize}
\item[{\rm (i)}] $X$ is cosmic if and only if $X$ is a  continuous image of a separable metric space.
\item[{\rm (ii)}] $X$ is an $\aleph_0$-space  if and only if $X$ is a compact-covering image of a separable metric space.
\end{itemize}
\end{theorem}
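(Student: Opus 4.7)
The plan is to prove both equivalences by building a subspace of the Baire space $\w^\w$. For the "if" directions, a single argument handles both: given a (compact-covering) continuous surjection $\phi\colon M\to X$ from a separable metrizable space, take a countable base $\mathcal{B}$ of $M$; then $\{\phi(B):B\in\mathcal{B}\}$ is a countable network in (i) because for $x\in U$ open one picks any $m\in\phi^{-1}(x)$ and a basic $B\ni m$ with $B\subseteq\phi^{-1}(U)$. Under the compact-covering hypothesis in (ii), any compact $K\subseteq U$ lifts to a compact $K'\subseteq\phi^{-1}(U)\subseteq M$, which is covered by finitely many basic $B_i\subseteq\phi^{-1}(U)$; then $K\subseteq\bigcup_i\phi(B_i)\subseteq U$, so $\{\phi(B):B\in\mathcal{B}\}$ is a countable $k$-network.

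For the "only if" direction of (i), using regularity first refine the given countable network to one consisting of closed sets and closed under finite intersections; call it $\Nn=\{N_n\}_{n\in\w}$. Define
\[
M:=\bigl\{\sigma\in\w^\w:\{N_{\sigma(k)}\}_{k\in\w}\text{ is a decreasing neighborhood base at a (necessarily unique) point }x_\sigma\in X\bigr\},
\]
equip $M$ with the subspace topology from the Baire space $\w^\w$, and define $\phi\colon M\to X$ by $\phi(\sigma):=x_\sigma$. Surjectivity is routine: for each $x\in X$ the subfamily $\{N\in\Nn:x\in N\}$ is countable, closed under finite intersections, and (by the network property together with regularity) a neighborhood base at $x$, so any enumeration produces a preimage of $x$. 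Continuity is immediate: if $\phi(\sigma)\in U$ open, choose $k_0$ with $N_{\sigma(k_0)}\subseteq U$; then the clopen cylinder $\{\tau\in M:\tau(k_0)=\sigma(k_0)\}$ is mapped into $N_{\sigma(k_0)}\subseteq U$.

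For the "only if" direction of (ii), run the same construction but verify that $\phi$ is compact-covering. Given a compact $K\subseteq X$, the $k$-network property of $\Nn$ combined with the fact that compact subsets of a cosmic space are metrizable (see Theorem~\ref{pComMet} and the comments following it) lets one select a decreasing sequence $\{V_k\}_{k\in\w}$ of open neighborhoods of $K$ forming a neighborhood base of $K$, together with finite subfamilies $\FF_k\subseteq\Nn$ such that $K\subseteq\bigcup\FF_k\subseteq V_k$ for each $k$, with $V_k$ shrinking fast enough that for every $x\in K$ and every choice of $N_{\sigma(k)}\in\FF_k$ containing $x$, the resulting sequence $\{N_{\sigma(k)}\}_k$ is a neighborhood base at $x$. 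Setting
\[
K':=\bigl\{\sigma\in M:\sigma(k)\in\mathrm{ind}(\FF_k)\text{ for all }k\text{ and }\phi(\sigma)\in K\bigr\},
\]
the set $K'$ lies inside the compact product $\prod_k\mathrm{ind}(\FF_k)\subseteq\w^\w$ and is closed in $M$, hence compact; a diagonal selection shows $\phi(K')=K$.

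The main obstacle is the last paragraph: one must arrange that the "global" finite covers $\FF_k$ of $K$ furnished by the $k$-network property are compatible with the "pointwise" requirement defining $M$, namely that for each $x\in K$ the chosen sequence $\{N_{\sigma(k)}\}$ is actually a neighborhood base at $x$. This is where the metrizability of $K$, which supplies a countable decreasing neighborhood base of $K$ in $X$, is essential, and it is the only non-formal input beyond the constructions already used in (i).
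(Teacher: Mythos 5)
The paper does not actually prove Theorem \ref{tCA}: it is quoted from Michael's paper \cite{Mich}, so there is no internal proof to compare with. Your argument for part (i) and for both ``if'' implications is essentially Michael's standard construction and is correct, apart from a terminological slip you should repair: members of a network are not neighborhoods, so ``decreasing neighborhood base at $x_\sigma$'' must be read as ``$x_\sigma\in\bigcap_k N_{\sigma(k)}$ and every neighborhood of $x_\sigma$ contains some $N_{\sigma(k)}$''. Under the literal reading $M$ would typically be empty (a network may consist of nowhere dense sets, e.g.\ singletons), and surjectivity of $\phi$ would fail.

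The compact-covering direction of (ii), however, has a genuine gap, and it sits exactly where you locate the ``main obstacle''. You propose to choose a decreasing sequence $\{V_k\}$ of open sets ``forming a neighborhood base of $K$'' and to shrink it ``fast enough'' that every selection $N_{\sigma(k)}\in\FF_k$ through a point $x\in K$ is automatically a network base at $x$. Neither is available: a compact subset of an $\aleph_0$-space need not have countable character in $X$, and metrizability of $K$ says nothing about its outer character. The paper's own Example \ref{exaBohr} gives $\mathbb{Z}^+$, an $\aleph_0$-space with $\chi(\mathbb{Z}^+)=\mathfrak{c}$, so already a singleton $K=\{x\}$ admits no countable neighborhood base, and no choice of the $V_k$ can force the finitely many sets of $\FF_k$ to eventually enter every one of the $\mathfrak{c}$-many neighborhoods of $x$. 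What is true is only that $\{N\in\Nn:x\in N\}$ is a countable network at $x$; the real work in Michael's proof is to organize the finite covers $\FF_k$ of $K$ so that simultaneously (a) each $x\in K$ admits an admissible selection through the $\FF_k$ that is a decreasing network base at $x$ (giving $K\subseteq\phi(K')$), and (b) every $\sigma$ in the closure of $K'$ inside the compact product $\prod_k\mathrm{ind}(\FF_k)$ again lies in $M$ (giving compactness of $K'$). Your text asserts (a) and (b), but the mechanism offered for them is false, so the selection and compactness arguments are missing. A secondary defect: arbitrary selections $\sigma(k)\in\mathrm{ind}(\FF_k)$ will not be decreasing, so either the definition of $M$ must drop monotonicity (requiring instead that every neighborhood of $x_\sigma$ contain some finite intersection $N_{\sigma(0)}\cap\dots\cap N_{\sigma(k)}$) or the families $\FF_k$ must be built as a refining tree; as written, $K'$ need neither be closed in the product nor map onto $K$.
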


\begin{question}
Find a characterization of $\Pp_0$-spaces  analogously to the characterization of $\aleph_0$-spaces given in Theorem \ref{tCA}.
\end{question}

Recall that a mapping $f: X\to Y$ is {\it sequence-covering} if each convergent sequence with the
limit point of $Y$ is the image of some convergent sequence with the limit point of $X$. Following Lin \cite{Lin}, a mapping $f: X\to Y$ is a {\it mssc-mapping} (i.e., metrizably stratified strong compact mapping) if there exists a subspace $X$  of the product space $\prod_{n\in\NN} X_n$ of a family $\{ X_n\}_{n\in\NN}$ of metric spaces satisfying the following condition: for each $y\in Y$, there exists an open neighborhood sequence $\{ V_i\}$ of $y$ such that each $\mathrm{cl}\left(p_i(f^{-1}(V_i)\right)$ is compact in $X_i$, where $p_i :\prod_{n\in\NN} X_n \to X_i$ is the projection. Li \cite{Li} characterized $\aleph$-spaces as follows:
\begin{theorem}[\cite{Li}]
A regular space $X$ is an $\aleph$-space if and only if $X$ is a  sequence-covering mssc-image of a metric space.
\end{theorem}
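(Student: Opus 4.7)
\medskip

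\noindent\textbf{Proof proposal.} The plan is to prove both directions by constructing explicit objects on each side from the data on the other, using the $cs$-network description of $\aleph$-spaces from Theorem \ref{tA-cs}.

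For the direction ``$\aleph$-space $\Rightarrow$ sequence-covering mssc-image of a metric space'', I would start from a $\sigma$-locally finite $cs$-network $\mathcal{P}=\bigcup_{n\in\NN}\mathcal{P}_n$ of $X$, arranged (by replacing $\mathcal{P}_n$ with the family of finite unions of members of $\bigcup_{k\le n}\mathcal{P}_k$, and then enlarging by closed local refinements if needed) so that each $\mathcal{P}_n$ is locally finite, closed under finite unions, and the sequence is increasing. Index each $\mathcal{P}_n=\{P^n_\alpha:\alpha\in A_n\}$ and give $A_n$ the discrete topology; each $A_n$ is then a metric space, and so is $\prod_n A_n$. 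Define
\[
M:=\Big\{a=(\alpha_n)\in\textstyle\prod_n A_n:\ \bigcap_n P^n_{\alpha_n}\text{ is a single point }x_a\text{ and }\{P^n_{\alpha_n}\}_{n\in\NN}\text{ is a network at }x_a\Big\},
\]
equipped with the subspace topology, and set $f(a):=x_a$. Continuity and surjectivity of $f$ are routine from the network property. For the mssc-condition, fix $x\in X$ and choose, for each $i$, an open neighbourhood $V_i$ of $x$ meeting only finitely many members of $\mathcal{P}_i$; then $\mathrm{cl}(p_i(f^{-1}(V_i)))$ is a finite, hence compact, subset of the discrete space $A_i$.

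The sequence-covering property is the main obstacle. Given a convergent sequence $x_k\to x$ in $X$, I must produce a convergent sequence $a_k\to a$ in $M$ with $f(a_k)=x_k$ and $f(a)=x$. Using that $\mathcal{P}$ is a $cs$-network, for each $n$ I can choose $P^n_{\beta_n}\in\mathcal{P}_n$ containing a tail $\{x\}\cup\{x_k:k\ge k_n\}$ and lying in the $n$-th member of a decreasing neighbourhood base of $x$ formed out of $\mathcal{P}$; setting $a=(\beta_n)$ and choosing, for each $k$, indices $\alpha^k_n$ that agree with $\beta_n$ for $n\le N(k)$ (with $N(k)\to\infty$) while also picking up $x_k$ at higher levels, produces $a_k\to a$ with $f(a_k)=x_k$. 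Handling the compatibility of these choices across $n$ and $k$ (so that the resulting $a_k$ actually lie in $M$) is the delicate bookkeeping step, but is made possible by the closure of $\mathcal{P}_n$ under finite unions.

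For the converse ``sequence-covering mssc-image of a metric space $\Rightarrow$ $\aleph$-space'', let $f:M\to X$ be such a map, with $M\subseteq\prod_n X_n$ and each $X_n$ metric. Since $X$ is regular (this must be verified separately, e.g.\ via the sequence-covering property combined with metrisability of $M$), it suffices to exhibit a $\sigma$-locally finite $cs^\ast$-network by Theorem \ref{tA-cs}. For each $n$ pick a $\sigma$-locally finite base $\mathcal{B}_n=\bigcup_{j}\mathcal{B}_{n,j}$ of $X_n$, and form
\[
\mathcal{N}:=\big\{f\big(M\cap p_n^{-1}(B)\big):n\in\NN,\ B\in\mathcal{B}_n\big\}.
\]
The mssc-condition is what makes each subfamily $\{f(M\cap p_n^{-1}(B)):B\in\mathcal{B}_{n,j}\}$ locally finite in $X$, since each $x\in X$ has a neighbourhood whose preimage has relatively compact projection to $X_n$, meeting only finitely many $B\in\mathcal{B}_{n,j}$. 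That $\mathcal{N}$ is a $cs^\ast$-network follows from the sequence-covering property: any sequence converging to $x$ in $X$ lifts to a convergent sequence in $M$, which in turn is eventually captured in $p_n^{-1}(B)$ for arbitrarily small basic neighbourhoods, and projecting back by $f$ gives the required $cs^\ast$-network property. The main obstacle on this side is verifying the subtle interplay between the lift and the projections so that the resulting set $f(M\cap p_n^{-1}(B))$ really contains an infinite tail of the original sequence inside the prescribed open neighbourhood of $x$.
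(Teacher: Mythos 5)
First, a point of orientation: the paper does not prove this statement at all --- it is quoted from Li's article \cite{Li} solely to motivate the question of characterizing strict $\aleph$-spaces and $\Pp$-spaces, so there is no in-paper argument to compare yours against. On its own terms, your sketch follows the standard Ponomarev-system route (index a $\sigma$-locally finite $cs$-network by discrete spaces $A_n$, take the subspace of $\prod_n A_n$ of point-determining index sequences), which is indeed the method used in the literature for theorems of this type. However, as written the proposal has genuine gaps in both directions. In the forward direction, the only nontrivial claim --- that $f$ is sequence-covering --- is exactly the step you defer to ``delicate bookkeeping.'' The tension is real: membership of $a_k$ in $M$ forces \emph{every} coordinate of $a_k$ to help pin down $x_k$ (the sets $P^n_{(a_k)_n}$ must form a network at $x_k$ with singleton intersection), while convergence $a_k\to a$ forces the first $N(k)$ coordinates of $a_k$ to agree with those of $a$, which pin down $x$ instead. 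Reconciling these two demands is the substance of the theorem, and the closure of $\mathcal{P}_n$ under finite unions alone does not obviously resolve it; nothing in your sketch shows the resulting $a_k$ actually lie in $M$. You also need each level $\mathcal{P}_n$ to cover $X$ (e.g.\ by adjoining $X$ itself to every $\mathcal{P}_n$) before surjectivity of $f$ can even be argued.

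In the converse direction there is a concrete error: the family $\mathcal{N}=\{f(M\cap p_n^{-1}(B))\}$ consists of images of cylinders over a \emph{single} coordinate, and such sets are in general far too large to be a network, let alone a $cs^\ast$-network. A basic neighbourhood of a point of $M\subseteq\prod_n X_n$ restricts finitely many coordinates simultaneously, so to arrange $f(\,\cdot\,)\subseteq U$ you must take finite intersections $f\bigl(M\cap\bigcap_{i\le n}p_i^{-1}(B_i)\bigr)$ with $B_i$ ranging over the bases of the first $n$ factors. This is repairable --- finite intersections of locally finite families are locally finite, and the mssc condition still forces local finiteness of the image family in $X$ --- but the family as you define it does not witness the $cs^\ast$-network property, so the appeal to Theorem \ref{tA-cs} does not yet go through. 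Finally, regularity of $X$ is a hypothesis of the theorem, not something you need to (or could) derive from the mapping data.
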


\begin{question}
Characterize analogously strict $\aleph$-spaces and $\Pp$-spaces.
\end{question}

Denote by $C_p(X)$ the space $C(X):=C(X,\mathbb{R})$ endowed with the pointwise topology.
Sakai \cite{Sak}  proved that $C_p(X)$ has countable $cs^\ast$-character if and only if $X$ is countable. It is well known that $\chi(C_p(X))= |X|$ and hence $cs^\ast(C_p(X)) \leq |X|$ for every infinite $X$.
\begin{question}
Is $cs^\ast(C_p(X))=|X|$ for every infinite Tychonoff space $X$?
\end{question}
If this question has a positive answer  then also $cp(C_p(X))=ck(C_p(X))=|X|$ by Corollary \ref{c1}.
It is well known that $C_p(X)$ is $b$-Baire-like for every Tychonoff space $X$.
In \cite{GaK} we proved that a $b$-Baire-like locally convex space $E$ is metrizable if and only if $E$ has countable $cs^\ast$-character.
\begin{question}
Let $E$ be a $b$-Baire-like locally convex space. Is $cs^\ast(E)=\chi(E)$?
\end{question}

\section{Acknowledgements}

The authors thank  to Taras Banakh for pointing out Example \ref{exa-A-non-CS}.

\bibliographystyle{amsplain}

\end{document}